\documentclass[12pt]{amsart}
\usepackage{tikz-cd,enumerate,ulem}
\normalem
\usepackage{mathtools}
\usetikzlibrary{arrows}
\usepackage[utf8]{inputenc}
\usepackage{xspace}

\usepackage[english]{babel}
\usepackage[T1]{fontenc}
\usepackage{amsmath,amssymb}

\usepackage[a4paper,top=3cm,bottom=2cm,left=3cm,right=3cm,marginparwidth=1.75cm]{geometry}

\usepackage{amsthm}
\usepackage{amsfonts}
\usepackage{graphicx}
\usepackage[colorinlistoftodos]{todonotes}
\usepackage[colorlinks=true, allcolors=blue]{hyperref}
\usepackage{cleveref}
\usepackage{url}
\newtheorem{theorem}{Theorem}[section]
\newtheorem{thm}[theorem]{Theorem}

\newtheorem{proposition}[theorem]{Proposition}
\newtheorem{definition}[theorem]{Definition}
\newtheorem{lemma}[theorem]{Lemma}

\newtheorem{corollary}[theorem]{Corollary}

\newtheorem{prop}[theorem]{Proposition}
\newtheorem{cor}[theorem]{Corollary}

\theoremstyle{definition}
\newtheorem{ex}[theorem]{Example}
\newtheorem{rem}[theorem]{Remark}
\newtheorem{remark}[theorem]{Remark}

{
      \theoremstyle{plain}
      
  }
  {
      \theoremstyle{plain}
      
  }

\def\FF{\mathbb{F}}
\def\GG{\mathbb{G}}
\def\KK{\mathbb{K}}
\def\LL{\mathbb{L}}
\def\NN{\mathbb{N}}
\def\QQ{\mathbb{Q}}

\def\ZZ{\mathbb{Z}}

\def\calo{\mathcal{O}}

\def\calS{\mathcal{S}}
\def\calY{\mathcal{Y}}

\def\End{\mathrm{End}}

\def\Gal{\mathrm{Gal}}
\def\GL{\mathrm{GL}}
\def\irr{{\mathrm{irr}}}
\def\PGL{\mathrm{PGL}}

\def\SL{\mathrm{SL}}
\def\red{{\mathrm{red}}}

\def\univ{{\mathrm{univ}}}
\def\Qbar{\overline{\QQ}}

\newcommand{\VV}{\mathbb{V}}
\newcommand{\Ind}{\mathrm{Ind}}

\newcommand{\mug}{{\boldsymbol\mu}}

\DeclareMathOperator{\Spec}{Spec}


\newenvironment{customthm}[1]
  {\innercustomthm}
  {\endinnercustomthm}

\newenvironment{customcor}[1]
  {\innercustomcor}
  {\endinnercustomcor}

\usepackage{enumitem}
\setlist[itemize,1]{label={--\,}}
\setlist{leftmargin=8mm}

\newtheorem*{fact*}{Fact} 
\newcommand{\Z}{{\mathbb{Z}}}
\newcommand{\Zp}{{\overline{\Z}_p}}
\newcommand{\Q}{{\mathbb{Q}}}
\newcommand{\Qp}{\overline\Q_p}

\newcommand{\F}{{\mathbb{F}}}

\newcommand{\fg}{{\mathfrak g}}
\newcommand{\fgl}{{\mathfrak{gl}}}
\newcommand{\fh}{{\mathfrak h}}

\newcommand\fp{{\mathfrak{p}}}
\newcommand{\ft}{{\mathfrak t}}
\newcommand{\fsl}{{\mathfrak{sl}}}

\newcommand\B{{\rm (B)}\xspace}

\newcommand\CE{{\eqref{eq:H1}}\xspace}
\newcommand\LPTR{{\ref{def:lptr}}\xspace}
\newcommand\Mat{{\mathrm{M}}}
\newcommand\NC{{\ref{eq:NC}}\xspace} 
\newcommand\PTR{{\ref{def:ptr}}\xspace}
\newcommand\N{{N}} 

\newcommand\cE{{\mathcal{E}}}

\newcommand\cO{{\mathcal{O}}}

\DeclareMathOperator{\im}{im} 
\DeclareMathOperator{\trace}{tr}
\newcommand\ab{{\mathrm{ab}}}
\newcommand\CNL{{\mathrm{CNL}}}

\newcommand\id{{\mathrm{id}}}
\newcommand\Id{{\mathrm{Id}}}
\newcommand\sq{{\square}}
\newcommand\sms{{\mathrm{ss}}}

\newcommand\ind{{\mathrm{ind}}}
\newcommand\notind{{\mathrm{nind}}}
\newcommand\loc{{\mathrm{loc}}}

\newcommand{\vareps}{\varepsilon}

\newcommand{\into}{\hookrightarrow}
\newcommand{\onto}{\twoheadrightarrow}
\newcommand{\ovl}{\overline}
\newcommand{\wtl}{\widetilde}
\newcommand{\wh}{\widehat}
\newcommand{\ccirc}{\kern0.5ex\vcenter{\hbox{$\scriptstyle\circ$}}\kern0.5ex}

\usepackage{crossreftools}

\makeatletter
\newcommand{\optionaldesc}[2]{%
  \phantomsection
  #1\protected@edef\@currentlabel{#1}\label{#2}%
}
\makeatother

\newtheorem*{theorem*}{Theorem}
\newtheorem*{remark*}{Remark}

\numberwithin{equation}{section}
\author{Andrea Conti and Lea Terracini}
\title{Bogomolov property for Galois representations with big local image}
\keywords{Weil height, Bogomolov property, $p$-adic Lie groups, images of Galois representations, modular forms}
\subjclass[2020]{11G50; 11F80}

\begin{document}

\begin{abstract} An algebraic extension of the rational numbers is said to have the \textit{Bogomolov property} \B if the absolute logarithmic Weil height of its non-torsion elements is uniformly bounded from below. Given a continuous representation $\rho$ of the absolute Galois group $G_\KK$ of a number field $\KK$, one says that $\rho$ has \B if the subfield of $\ovl\Q$ fixed by $\ker(\rho)$ has \B. We prove that, if $\rho:G_\KK \to \GL_d(\ZZ_p)$ maps an inertia subgroup at a prime above $p$ surjectively onto an open subgroup of $\GL_d(\ZZ_p)$, then $\rho$ has \B. More generally, we show that if the image of inertia is open in the image of the decomposition group, the normal closure of the local image is sufficiently large in the global one, and a certain condition on the center of $\rho(G_\KK)$ is satisfied, then $\rho$ has \B. In particular, no assumption on the modularity of $\rho$ is needed, contrary to previous work of Habegger and Amoroso--Terracini. We provide several examples both in modular and non-modular cases. Our methods rely on a result of Sen comparing the ramification and Lie filtrations on the $p$-adic Lie group $\rho(G_\KK)$.
\end{abstract}

\maketitle

\section*{Introduction}
\noindent A set $\mathcal{Y}$ of algebraic numbers has the \textit{Bogomolov property}, shortened as `property \B', if there is no infinite sequence of non-torsion elements $\alpha\in \mathcal{Y}\setminus \{0\}$ whose absolute, logarithmic Weil height tends to 0.  Property \B was introduced in \cite{BombieriZannier2001} as a weakening of the Northcott property (N), that holds when the set $\mathcal Y$ contains only a finite number of elements of any bounded height. One easily sees that \B is not satisfied by $\mathcal Y=\ovl\Q$, for which the (open) Lehmer conjecture predicts a lower bound on the height of non-torsion elements as a function of the degree.
However, property \B has been established for a large class of algebraic extensions: for an overview of the results in this area, we refer the reader to the introduction of \cite{AmorosoTerracini2024}. We limit ourselves to mention that \B holds for $\KK^{ab}$, the maximal abelian extension of a number field $\KK$ (see \cite{AmorosoDvornicich2000} for the case $\KK=\QQ$ and \cite{AmorosoZannier2000} for the general case), and for the field $\QQ(E_{tors})$ obtained by adding to $\QQ$ the coordinates of the torsion points of an elliptic curve $E$ defined over $\QQ$ \cite{Habegger2013}. In the spirit of generalizing the previous results, it is natural to investigate \B for the extension $\KK(\rho)$ fixed by the kernel of a continuous representation $\rho$ of the absolute Galois group $G_\KK=\Gal(\ovl\KK/\KK)$, with $\KK$ a number field. In this case, we say that $\rho$ has \B if $\KK(\rho)$ has \B, and the results cited above then show that all characters of $G_\KK$ have \B, as well as the adelic Galois representation attached to an elliptic curve over $\QQ$. This point of view was introduced in \cite{AmorosoTerracini2024}, where the authors proved \B for certain adelic modular representations \cite[Theorems 1.4 and 1.5]{AmorosoTerracini2024}.


One of the key ingredients in the proof of property \B for an algebraic field $\LL$ in the cases we cited above is the use of \textit{metric inequalities} at a fixed prime $p$: for a given $\alpha\in \calo_\LL$, one finds a conjugate $\alpha'\not=\alpha$ of $\alpha$ such that $\lvert\alpha-\alpha'\rvert_v$ is bounded by a constant $C$, for every $v$ in a sufficiently large set $S$ of places above $p$. This is first established at a single $v$ and then extended to a suitable set of conjugates of $v$. The local task at $v$ amounts to bounding the ratio between the ramification index of $v$ and the index of the last non-trivial ramification group (in lower notation) of a finite subextension of $\LL$ containing $\alpha$. 
In the abelian case, the required bound follows from the Hasse--Arf theorem \cite[Proposition 2.3]{AmorosoZannier2010}. For fields of the form $\QQ(E_{tors})$, the proof relies on Lubin--Tate theory applied to the formal group of $E$ at a supersingular prime \cite[Lemma 3.3 (ii)]{Habegger2013}. The same argument can be applied to the Galois representation associated to a cuspidal form $f=\sum a_nq^n$ in the special case when $a_p=0$ for a prime $p$ not dividing the level (see the proof of Theorem 1.4 in \cite{AmorosoTerracini2024}).

We apply the method of metric inequalities to a class of fields that appears naturally in the theory of Galois representations, namely that of Galois extensions $\LL/\KK$, $\KK$ a number field, for which $\Gal(\LL/\KK)$ is a \textit{$p$-adic Lie group}, i.e. is equipped with a $p$-adic analytic structure compatible with the group operation. This is always the case if $\LL=\KK(\rho)$ for a continuous representation $\rho:G_\KK\to\GL_d(\Qp)$. In addition to the upper and lower ramification filtrations, the group $G\coloneqq\Gal(\LL/\KK)$ comes equipped with a Lie filtration that enjoys very good properties: for instance, the graded pieces are eventually all of the same size, determined by the dimension of the Lie group. 
After restricting to a decomposition group $G_v \subset G$ at a place $v$ of $\KK$ above $p$, the ramification and Lie filtrations are related by a very general result proved by Sen \cite{Sen1972}, answering a question of Serre \cite{Serre1967}. 
According to Sen's theorem (Theorem \ref{teo:sen}), the ramification filtration of $G_v$ in the upper numbering is equivalent to the Lie filtration, up to scaling the indices of the former by a factor equal to the ramification index of $\KK_v/\mathbb{Q}_p$. This allows us to relate the Lie filtration to the ramification filtration with lower numbering (Theorem \ref{teo:bound1}), and in turn to derive the metric inequalities for $\LL$.


Via Sen's theorem and its consequences, we are able to prove the Bogomolov property for a class of algebraic extensions $\LL/\Q$ whose Galois groups $G$ (over some intermediate number field $\KK$) are compact \(p\)-adic Lie groups; namely those that enjoy three properties: we require that $\LL/\KK$ has the \textit{central element property} (there exists an element of $G$ that acts on $p$-power roots of unity by raising them to an integer power $g>1$, see Definition \ref{def:CEP}), it is locally potentially totally ramified (an inertia subgroup at a place of $\KK$ over $p$ is open in the decomposition group, see Definition \ref{def:PTR}), and it has the \emph{normal closure property} (the normal closure of a decomposition group is sufficiently large in $\Gal(\LL/\KK)$, see \NC.

\begin{customthm}{1}[{Theorem \ref{teo:BforLie}}]\label{thm:intro1NC}
Let $\KK$ be a number field and $\LL/\KK$ be a Galois extension such that $\Gal(\LL/\KK)$ is a $p$-adic Lie group. If $\LL/\KK$ is locally potentially totally ramified at a place of $\KK$ above $p$, has the central element and the normal closure property, then it has property \B.
\end{customthm}

\noindent The assumptions of Theorem \ref{thm:intro1NC} are motivated by the following steps in the proof:
\begin{itemize}
\item[(1)] Once a metric inequality has been detected at a place \(v\) above \(p\), it becomes necessary to enlarge the constant \(C\) involved in the inequality. This is achieved by replacing \(\alpha\) with a power \(\alpha^q\) (with \(q\) a power of \(p\)); at this step, we must ensure that the metric inequality does not become trivial, i.e., \(\alpha^q - \alpha'^q \neq 0\). The central element property allows us to work with elements $\alpha$ for which this degeneracy does not occur.
\item[(2)] When passing from a local to a global setting, it is necessary to extend the metric inequality from the fixed place \(v\) to a proportion of places above \(p\) that does not decrease along the tower of finite Galois subextensions of \(\LL/\QQ\). We show that this is always possible when $\LL/\QQ$ is locally potentially totally ramified and has the normal closure property.
\end{itemize}

\noindent When $\LL/\KK$ has the stronger property of being \emph{potentially totally ramified} at a $p$-adic place $v$ (an inertia group at $v$ is open in $\Gal(\LL/\KK)$) the assumptions of \Cref{thm:intro1NC} can be simplified, giving the following:

\begin{customthm}{2}[{Theorem \ref{teo:rhohasBcenter}}]\label{thm:intro1}
Let $\KK$ be a number field and $\LL/\KK$ be a Galois extension such that $\Gal(\LL/\KK)$ is a $p$-adic Lie group. If $\LL/\KK$ is potentially totally ramified at a place of $\KK$ above $p$ and has the central element, then it has property \B.
\end{customthm}

Despite looking like a technical assumption, the central element property is necessary, as one can easily produce a counterexample if it is removed (Remark \ref{rem:counterexample}). 
However, without this property, we can still give a lower bound on the height outside of a set of ``bad'' elements of $\LL$ (see Theorem \ref{teo:condizionale}), i.e. the elements for which step (1) above fails. Unfortunately, such a set of bad elements is generally larger than one would expect based on conjectures of Rémond, and as shown in a special case by Amoroso (see \cite{Amoroso2016} and Remark \ref{rem:amoroso}).

A compact $p$-adic Lie group can always be embedded in $\GL_d(\Z_p)$ for a sufficiently large $d$ (see Lemma \ref{lem:berger}). In particular, given a continuous representation
\[ \rho \colon G_{\KK} \;\longrightarrow\; \GL_d(\mathbb{Z}_p), \]
we can deduce from Theorem \ref{thm:intro1} a result about property \B for $\LL=\KK(\rho)$. In this setting, both the central element property and potential total ramification are satisfied under a ``full'' or ``big'' inertial image condition (Assumption \eqref{assumption:main}), leading us to the following. We fix a decomposition group $G_v\subset G_\KK$ at $v$, and an inertia subgroup $I_v\subset G_v$.

\begin{customcor}{3}[Theorem \ref{teo:B-simplified}]\label{cor:intro1}
If $\rho(I_v)$ is open in $\GL_d(\mathbb{Z}_p)$, then $\KK(\rho)$ has \B.
\end{customcor}

 This result, together with the applications that we provide in the text, produces many examples of non-abelian representations of $G_\KK$, not obtained by restriction from $G_\QQ$, that enjoy property \B.

The fullness condition can essentially be checked on a decomposition group: we show that $\rho(I_v)$ is open in $\GL_d(\mathbb{Z}_p)$ if and only if $\rho(G_v)$ contains an open subgroup of $\SL_d(\ZZ_p)$ and the determinant is not trivial on the wild inertia subgroup (Proposition \ref{prop:inaspettata}).

 The novelty of these results, with respect to previous examples of Galois representations $\rho$ with property \B, is that no assumption on either the dimension or the origin of $\rho$ is made: in particular, as we explain below, many non-modular representations with property \B can be produced already in the 2-dimensional case. To our knowledge, these are the first examples when $\rho$ is not assumed to be ``geometric''.

Our next step is to produce examples of representations that satisfy the assumptions of Corollary \ref{cor:intro1}. 
In the 2-dimensional case, it turns out that the openness condition on the image of inertia is not very restrictive. An explicit classification of the possible Lie algebras of a \(p\)-adic Lie group \(G\subseteq \GL_2(\ZZ_p)\) allows one to show that $G$ is full if and only if the $G$-module \(\QQ_p^2\) is strongly absolutely irreducible (Proposition \ref{prop:liealg}). From this, one can easily characterize representations satisfying \eqref{assumption:main} (Corollary \ref{cor:bigimage}), as the absolutely irreducible representations that are not induced, and whose determinant is infinite on $I_v$. In particular, we deduce the following theorem, that is stated in the main text for general $\KK$ but we reproduce here only for $\KK=\QQ$ in order to ease the exposition:

\begin{customthm}{4}[Theorem \ref{thm:propsimplyB}]\label{thm:intro2}
Let $\rho:G_\Q\to\GL_2(\Z_p)$ be a continuous representation such that $\rho\vert_{G_p}$ is absolutely irreducible and not induced. Assume that the Hodge--Tate--Sen weights $k_1,k_2$ of $\rho\vert_{G_p}$ satisfy $k_1\ne\pm k_2$. Then $\rho$ has \B.
\end{customthm}

This criterion applies in particular when $\rho$ is the Galois $p$-adic representation associated with a modular eigenform:

\begin{customcor}{5}[Proposition \ref{prop:modhasB1}]\label{cor:intro4}
Let $k\ge 2$, $N\ge 1$ be two integers, and let $f$ be a cuspidal modular eigenform of weight $k$ and level $\Gamma_1(N)$. Let $p\nmid N$ be a prime, and $\fp$ a $p$-adic place of the Hecke field $\KK_f$ such that $f$ is $\fp$-supersingular, has $T_p$-eigenvalue $a_p\ne 0$, and the $\fp$-completion of $\KK_f$ is $\QQ_p$.
Then the $\fp$-adic Galois representation $\rho_{f,\fp}:G_\Q\to\GL_2(\KK_{f,\fp})$ attached to $f$ has \B.
\end{customcor}
This result is orthogonal to \cite[Theorem 1.4]{AmorosoTerracini2024}, which relies on the fact that $\rho_{f,\fp}\vert_{G_p}$ is induced when $a_p=0$, a case when the image of inertia cannot be full. In \Cref{ex:AV} we apply \Cref{cor:intro4} to the supersingular factors of Tate modules of abelian varieties of $\GL_2$-type.

Note that $\rho_{f,\fp}(I_p)$ is often full even if the mod $p$ reduction of $\rho_{f,\fp}\vert_{G_p}$ is reducible, as is the case whenever $p$ is \textit{$\Gamma_0(N)$-regular} in the sense of Buzzard. We refer to Section \ref{sec:regular} for explicit examples extracted from the LMFDB \cite{LMFDB}.

The assumptions of Theorem \ref{thm:intro2} are particularly pleasant, since they are all satisfied by a ``generic'' representation of $G_\Q$. We make this statement precise in Section \ref{sec:exampledef}, where we construct open dense subspaces of suitable (pseudo-)deformation spaces whose points all have property \B. 

If one would rather work with representations that are relevant in the theory of $p$-adic modular forms, the deformation-theoretic results can be specialized to the $p$-adic representations attached to non-ordinary overconvergent $p$-adic eigenforms for $\GL_{2/\Q}$ (i.e forms obtained by interpolating classical, $p$-supersingular eigenforms in the weight variable). We show that property \B holds generically along a family of such forms, that is, a non-ordinary component of the Coleman--Mazur eigencurve (Proposition \ref{prop:overconvergent}). 


We also give examples of representations of arbitrary dimension having property \B, relying on results of Serre and Chenevier (Theorem \ref{thm:serreB}, Proposition \ref{prop:chenevier}). 

In all of the applications we mentioned until this point the relevant field extension is potentially totally ramified at a $p$-adic place, so that \Cref{thm:intro1} suffices. Since it is unlikely for such a strong assumption to hold in more general situations of arithmetic interest, we also give a criterion for \B, for 2-dimensional representations, based on the weaker assumptions of \Cref{thm:intro1NC}; see \Cref{prop:2dim}. A mod $p$ ``large global image'' assumption is required, but this is known not to be very restrictive in arithmetic applications. We apply our result to the representation carried by the $p$-adic Tate module of an elliptic curve over a number field.

\begin{customthm}{6}[{\Cref{thm:ellK}}]
Let $E$ be a non-CM curve elliptic defined over a number field $\KK$. Assume that there exists a rational prime $p$ such that:
\begin{enumerate}[label=(\roman*)]
\item $\ovl\rho_{E,p}:G_\KK\rightarrow \GL_2(\FF_p)$ is surjective;
\item for a $p$-adic place $v$ of $\KK$, $\rho|_{G_v}$ is absolutely irreducible.
\end{enumerate} 
Then $\rho_{E,p}$, or equivalently $\KK(E[p^\infty])$, has \B.
\end{customthm}

The main limitation of our method, compared to the results in \cite{Habegger2013} and \cite{AmorosoTerracini2024}, is that it only applies to \(p\)-adic representations. Since representations of geometric origin live in compatible systems, one can investigate property \B for the compositum of the fields cut out by the $\ell$-adic member of the system as $\ell$ varies among the primes of a global field of definition, i.e. the field fixed by the corresponding adelic representation. In fact, an analysis of the proofs in \cite{Habegger2013, AmorosoTerracini2024} reveals that the assumption $a_p=0$ is used twice, for seemingly unrelated purposes. First, it is useful in order to understand the local representation $\rho\vert_{G_p}$ at the chosen prime $p$, and establish property \B for the $p$-adic representation. This is the context we set out to investigate in this work, and in which we have been able to obtain more general results. Second, the same assumption allows one to manage the factors of the adelic representation that are unramified at $p$, by exploiting the fact that a power of $\mathrm{Frob}_p$ is central (see \cite[Proposition 3.1]{AmorosoTerracini2024}). Although working with the ramified factor might seem like a more complex task, it is precisely this second application that we failed to deal with. Note that property \B is not preserved under field composition, so one has to find a way to prove it in a simultaneous way for the ramified and unramified-at-$p$ part of a compatible system. 

We conclude this introduction by pointing out a few aspects of our work that we would like to explore in more depth, and that will be the subject of future research.

\smallskip

\smallskip

\noindent\textit{Fields without the central element property.} The central element property is used to replace an element \(\alpha\) in the field with an element whose height is controlled by that of \(\alpha\), and which is not a $p^k$-th root of any element on a lower step of the Galois tower. This property already appeared in \cite[Proposition 4.2]{AmorosoDavidZannier2014}, \cite[Lemma 6.4]{Habegger2013}, and \cite[Assumption 3.2]{AmorosoTerracini2024}. 
It would be enlightening to connect it with the arithmetic of the field involved. Moreover, for a field not satisfying the central element property, we hope to refine our Theorem \ref{teo:condizionale} to identify significant subsets on which the height is bounded from below. This kind of question is related to a conjecture of Rémond, as reported in \cite{Amoroso2016}.

\smallskip

\noindent\textit{Refining the constants.} In this paper, we focused on the existence of a (non-explicit) lower bound for the Weil height of certain algebraic extensions. The study of the constant arising from Theorem \ref{teo:BforLie}, as well as a more in-depth study of its dependence on other quantities involved — such as the prime \(p\), the dimension of the Lie group, and the constant arising from Sen's theorem — will be object of future work.

\smallskip

\noindent\textit{Abelian varieties.} While we produce some examples of abelian varieties of $\GL_2$-type whose associated Galois representation has \B at a place of the field of multiplication, it would be desirable to extend our method to proving \B for the full $p$-adic torsion of some abelian varieties not necessarily of $\GL_2$-type. 

\subsection*{Notation}\label{sec:notation}
In the whole paper, $p$ is a fixed odd prime. 
Moreover, 
\begin{itemize}
\item algebraic extensions of $\QQ_p$ will be denoted by $F,K,E,L,\ldots$,
\item algebraic extension of $\QQ$ will be denoted by $\mathbb{F},\mathbb{K},\mathbb{E},\mathbb{L},\ldots$.
\end{itemize}
If $\alpha$ is an algebraic number, we denote by $h(\alpha)$ the absolute logarithmic Weil height of $\alpha$. We recall its definition in Section \ref{sec:B}. \\
For an arbitrary field $\kappa$, we denote by $\ovl \kappa$ an algebraic closure of $\kappa$, and by $G_\kappa=\Gal(\ovl \kappa/\kappa)$ the absolute Galois group of $\kappa$. \\
We denote by $G_p$ an absolute Galois group of $\Q_p$, and by $I_{p}\subset G_{p}$ its inertia subgroup. We write $I_p^w$ for the wild inertia subgroup of $I_p$, and $I_p^t$ for the tame quotient $I_p/I_p^w$. We fix an embedding $\ovl\Q\into\Qp$, identifying $G_{p}$ with a decomposition subgroup of $G_\Q$. \\
We denote by $\mug$ the set of roots of unity in $\ovl\Q$ and by $\mug_{p^\infty}$ the subgroup of roots of unity of order a power of $p$. For every field $\kappa$, we put 
$\mug_{p^\infty}(\kappa)=\kappa^\times\cap\mug_{p^\infty}$. We write $\vareps_p\colon G_\Q\to\Z_p^\times$ for the $p$-adic cyclotomic character, and use the convention that $\vareps_p$ has Hodge--Tate weight 1. \\
We write $\Mat_n(A)$ for the ring of $n\times n$ matrices with coefficients in a commutative ring $A$. For $n,d\in\Z_{\ge 1}$, we write $\Gamma_d(p^n)=\ker(\SL_d(\ZZ_p)\to\SL_d(\ZZ/p^n\ZZ))$ and $\wtl\Gamma_d(p^n)=\ker(\GL_d(\ZZ_p)\to\GL_d(\ZZ/p^n\ZZ))$, for the principal congruence subgroups of level $p^n$ of $\SL_d(\ZZ_p)$ and $\GL_d(\ZZ_p)$, respectively (see also Definition \ref{def:congsub}). Since principal congruence subgroups are sufficient to our purposes, we will always omit the ``principal’’. \\
The center of a group $G$ is denoted $Z(G)$.

\medskip

\setcounter{tocdepth}{1}
\tableofcontents


\section{A consequence of a theorem of Sen}\label{subsect:sen} 
For preliminaries on $p$-adic Lie and ramification groups, we refer to Appendix \ref{sect:lie}. 
Let $\QQ_p\subseteq F \subseteq L\subseteq \Qbar_p$ be a tower of fields such that $L/F$ is totally ramified and  $F/\QQ_p$ has a  finite ramification index $e = e(F/\QQ_p)$.
Assume that the Galois group $G = \Gal(L/F)$ is a $p$-adic Lie group, with $\dim(G) > 0$.
Let 
\[ \ldots\subseteq G_n\subseteq G_{n-1}\subseteq \ldots\subseteq G=\Gal(L/F) \]
be a Lie filtration on $G$. By a theorem of Sen (Theorem \ref{teo:sen}), there exists a constant $c > 0$ such that
\begin{equation}\label{eq:sen} G(ne + c) \subseteq  G_n\subseteq  G(ne - c),\end{equation}
for all $n$, with $G(r) = G$ for $r < 0$. We set:
\begin{itemize}
\item $Q_n=G/G_n$; $|Q_n|=e_n$,
\item $L_n=L^{G_n}$, so that $\Gal(L_n/F)=Q_n$,
\item $Q_n[i]=i$-th lower ramification groups of $L_n/F$,
\item $Q_n(i)=i$-th upper ramification groups of $L_n/F$, so that $Q_n(i)=G(i)G_n/G_n$ by \eqref{eq:quotients},
\item $h^n_i=|Q_n[i]|$.
\end{itemize} 
As recalled in Appendix \ref{appendix:ramification}, upper and lower ramification indices of $L_n/\QQ_p$ are related by the maps 
\[ \varphi_n=\varphi_{L_n/\QQ_p}, \quad \psi_n=\psi_{L_n/\QQ_p}, \]
where $\psi_n=\varphi_n^{-1}$ and
\[ Q_n(\varphi_n(i))=Q_n[i], \quad\quad Q_n[\psi_n(i)]=Q_n(i). \]
Recall that for $m\leq u\leq m+1$
\[\varphi_n(m)=\frac 1 {h^n_0}\left (h^n_1+\ldots + h^n_{m}+(u-m)h^n_{m+1}\right ). \]
Notice that $h^n_0=e_n$.

We recall that, for a finite Galois group $H$, any integer number $r$ such that $H[r]\supsetneq H[r + \epsilon]$ for all $\epsilon>0 $ is called a {\sl jump} in
the chain $(H[i])_i$. 

Let  $r^n_1<\ldots, <r^n_{s(n)}$ be the jumps in the chain $(Q_n[i])_i$, so that 
\[Q_n=Q_n[r_1^n]\supsetneq Q_n[r_2^n]\supsetneq \ldots\supsetneq Q_n[r_{s(n)}^n]\supsetneq \{1\}.\]

By Theorem \ref{teo:Lieuniform}, the $p$-adic Lie group $G$ admits a uniform pro-$p$ open subgroup $H$. In particular, for $m$ large enough, $G_m\subset H$, so that $[G_{m+n}:G_{m+n-1}]\leq [H_{n}:H_{n-1}]$ which is independent of $n\ge 0$. Therefore, there exists $A>0$ such that for every $n$, $[G_n:G_{n-1}]\leq p^A$.

\begin{theorem}\label{teo:bound1} 
Assume that the above conditions  are satisfied, and that $[F:\QQ_p]$ is finite. 
Then for $j_0\in\NN$, there exists a constant $C>0$ (depending on $j_0$) such that for every $n$ such that $s(n)\geq j_0$,
\begin{equation*}  \frac{r^n_{{s(n)}-j_0}} {e_n}\geq C.\end{equation*}
\end{theorem}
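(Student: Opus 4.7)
The plan is to expand $r^n_{s(n)-j_0}$ using the piecewise-linear description of $\varphi_n$ and then lower-bound the resulting sum by a telescoping block of its last terms. Between consecutive lower ramification jumps the function $\varphi_n$ has constant slope $|Q_n[r^n_j]|/e_n$, so together with $\varphi_n(r^n_j) = u^n_j$ this yields $r^n_j - r^n_{j-1} = (u^n_j - u^n_{j-1})\cdot e_n/|Q_n[r^n_j]|$ and hence
\[ r^n_{s(n)-j_0} \;=\; e_n \sum_{k=1}^{s(n)-j_0} \frac{u^n_k - u^n_{k-1}}{|Q_n[r^n_k]|}. \]
Two uniform-in-$n$ estimates are then needed: an upper bound on $|Q_n[r^n_k]|$ for $k$ close to $s(n)$, and a lower bound on the span of the last handful of upper ramification jumps.

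The first is immediate: since $L_n/F$ is totally ramified, the residue field of $L_n$ has cardinality $p^f$ (with $f$ the residue degree of $F/\QQ_p$), and each ramification quotient $Q_n[i]/Q_n[i+1]$ for $i \ge 1$ embeds additively into this residue field. Iterating this bound from the trivial group upwards yields $|Q_n[r^n_{s(n)-j}]| \le p^{f(j+1)}$ for every $j \ge 0$, independently of $n$.

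The second estimate is the main obstacle and is where Sen's theorem is used crucially. Combining the inclusions $G(ne+c) \subseteq G_n \subseteq G(ne-c)$ with the uniform bound $[G_n:G_{n+1}] \le p^A$ recalled just before the statement, one obtains for every $a \ge c$ that $G_{\lceil (a+c)/e \rceil} \subseteq G(a) \subseteq G_{\lfloor (a-c)/e \rfloor}$, and therefore
\[ [G(a) : G(a+e)] \;\le\; p^{K} \qquad\textrm{for } K := A\,\lceil 3 + 2c/e \rceil. \]
Since each upper ramification jump of $G$ above $0$ contributes a factor of at least $p$ to such an index, any interval of length at most $e$ contains at most $K$ upper jumps of $G$, and hence of $Q_n$ (whose upper jumps form a subset of those of $G$). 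Consequently, the $K+1$ consecutive upper jumps $u^n_{s(n)-j_0-K}, \dots, u^n_{s(n)-j_0}$ cannot all lie in an interval of length $\le e$, giving the desired uniform lower bound $u^n_{s(n)-j_0} - u^n_{s(n)-j_0-K} \ge e$.

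Combining the two estimates, for every $n$ with $s(n) \ge j_0+K$, restricting the sum to its last $K$ terms and using that $|Q_n[r^n_k]|$ is non-increasing in $k$, one has
\[ r^n_{s(n)-j_0} \;\ge\; \frac{e_n}{|Q_n[r^n_{s(n)-j_0-K+1}]|} \bigl(u^n_{s(n)-j_0} - u^n_{s(n)-j_0-K}\bigr) \;\ge\; \frac{e_n \cdot e}{p^{f(j_0+K)}}, \]
which proves the theorem with $C := e/p^{f(j_0+K)}$ for such $n$. Finally, the two-sided estimate $p^{(\dim G)\,n - O(1)} \le |Q_n| \le p^{f(s(n)+1)}$ forces $s(n) \to \infty$, so only finitely many $n$ satisfy $j_0 \le s(n) < j_0+K$; on this finite set $r^n_{s(n)-j_0}/e_n$ is a positive rational number, and the constant $C$ is shrunk accordingly.
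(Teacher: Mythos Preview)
Your proof is correct and follows essentially the same route as the paper's. Both arguments combine the bound $|Q_n[r^n_{s(n)-j}]|\le p^{f(j+1)}$ with a Sen-based lower bound of $e$ on the span $u^n_{s(n)-j_0}-u^n_{s(n)-j_0-K}$ of a fixed number of consecutive upper jumps; your packaging via the telescoping sum for $r^n_{s(n)-j_0}$ and the uniform index bound $[G(a):G(a+e)]\le p^K$ is a slightly cleaner reformulation of the paper's chain-of-inclusions computation, and you are explicit about absorbing the finitely many small $n$ into the constant, which the paper leaves implicit.
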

\begin{proof} Let $k_F$ be the residue field of $F$, and $f=[k_F:\FF_p]$. By \cite[Ch. IV, \S 2]{Serre1968}, the quotient $Q_n/Q_n[1]$ is isomorphic to a subgroup of $k_F^\times$, and for $j\geq 1$, the quotients $Q_n[j]/Q_n[j+1]$ are isomorphic to subgroups of $k_F$. It follows that
\begin{equation}\begin{split}\label{eq:pf}
& p\leq |Q_n[r^n_{s(n)}]| \leq  p^f,\\
& p\leq [Q_n[r^n_{i}]:Q_n[r^n_{i+1}]]\leq p^f   \quad \hbox{
for } i=2,\ldots, s(n)-1,
\\
& 1<[Q_n:Q_n[r^n_{2}]]\leq  p^f. 
\end{split} \end{equation}
Hence
\begin{align}\label{hn} h^n_{r^n_{s(n)-j}}\leq  p^{f(j+1)} & \hbox{ for } j=0,\ldots, s(n)-1.
\end{align}

Let $0\leq j_0\leq s(n)-2$.
Then for $j=1,\ldots ,s(n)-j_0-1$
\begin{equation}\label{eq:dis0} \begin{split} \varphi_n(r^n_{{s(n)}-j_0})& -\varphi_n(r^n_{s(n)-j_0-j}) =\frac 1 {e_n}(h^n_1+\ldots + h^n_{r^n_{{s(n)}-j_0}})-\frac 1 {e_n}(h^n_1+\ldots +h^n_{r^n_{s(n)-j_0-j}}), \quad\hbox{ by }\eqref{eq:phi},\\
& = \frac 1 {e_n}(h^n_{r^n_{s(n)-j_0-j}+1}+\ldots +h^n_{r^n_{s(n)-j_0}})\\
&= \frac 1 {e_n}(r^n_{s(n)-j_0-j+1}-r^n_{s(n)-j_0-j})h^n_{r^n_{s(n)-j_0-j+1}}+\ldots + \frac 1 {e_n}(r^n_{s(n)-j_0}-r^n_{s(n)-j_0-1})h^n_{r^n_{s(n)-j_0},}\intertext{where we used the fact that $h^n_{a}=h^n_{b}$ for $r^n_i<a,b\le r^n_{i+1}$, by definition of the jumps $r^n_i$, }
& \leq \frac 1 {e_n}(r^n_{s(n)-j_0}-r^n_{s(n)-j_0-j}) h^n_{r^n_{s(n)-j_0-j+1}}
\intertext{because $h^n_{a}\ge h^n_b$ when $a\le b$,}
& \leq \frac {r^n_{s(n)-j_0}} {e_n} p^{f(j_0+j)},\quad \hbox{by } \eqref{hn}.
\end{split}\end{equation}
Let $k_0\geq 1$ be such that there are at least $j_0+1$ jumps between $G_n$ and $G_{n-k_0}$: thanks to \eqref{eq:pf}, it is enough to choose $k_0$ such that $[G_{n-{k_0}}:G_n]=p^{k_0A} \geq p^{f(j_0+1)}$  Consider the chain
\begin{equation*}\label{chain1} G_n\subsetneq G_{n-k_0}\subseteq G(e(n-k_0)-c)
\subseteq G(e(n-k_0-1)-c)\subseteq G_{n-k_0-1-\lceil \frac {2c}e\rceil}.
\end{equation*}
Passing to the quotient by $G_n$ we get
\[\{1\}\subsetneq Q_n (e(n-k_0)-c)=Q_n[\psi_n(e(n-k_0)-c)]\]
so that by definition $r^n_{s(n)-j_0}\geq \psi_n(e(n-k_0)-c)$,  that is 
\begin{equation}\label{eq:dis1} \varphi_n(r^n_{s(n)-j_0})\geq e(n-k_0)-c.\end{equation}
Notice that $[G_{n-k_0-1-\lceil \frac {2c}e\rceil}:G_n]\leq p^{A(k_0+1+\lceil \frac {2c}e\rceil)}$, so that there cannot be more that $A(k_0+1+\lceil \frac {2c}e\rceil)$ jumps between $G_n$ and $G_{n-k_0-1-\lceil \frac {2c}e\rceil)}$. It follows that \begin{equation}\label{eq:dis2} \varphi_n(r^n_ {s(n)-A(k_0+1+\lceil \frac {2c}e\rceil)})\leq e(n-k_0-1)-c.\end{equation}
Put $j=A(k_0+1+\lceil \frac {2c}e\rceil)-j_0$. 
Then, by \eqref{eq:dis1} and \eqref{eq:dis2},
\[\varphi_n(r^n_{s(n)-j_0})-\varphi_n(r^n_{s(n)-j_0-j})\geq e\]
so that by \eqref{eq:dis0}
\begin{equation}\label{rnen2} \frac {r^n_{s(n)-j_0}} {e_n}\geq \frac e {p^{fA(k_0+1+\lceil \frac {2c}e\rceil)}}. \end{equation}
\end{proof}

For $j_0=0$, we obtain the following corollary.

\begin{corollary}\label{cor:rapplim}
The quotients $\frac {r^n_{s(n)}} {e_n}$ are bounded from below by a constant as $n\to+\infty$.
\end{corollary}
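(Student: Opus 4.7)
The plan is to deduce the corollary as the special case $j_0 = 0$ of Theorem~\ref{teo:bound1}. Plugging $j_0 = 0$ into the conclusion of the theorem gives a constant $C > 0$ such that $r^n_{s(n)}/e_n \geq C$ for every $n$ satisfying the hypothesis $s(n) \geq j_0 = 0$.

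The only thing to verify is that the quantity $r^n_{s(n)}$ is well-defined for $n \gg 0$, i.e.\ that $s(n) \geq 1$ eventually. Since $G$ is a $p$-adic Lie group of positive dimension and the $G_n$ form a Lie filtration, the indices $e_n = [G:G_n]$ tend to infinity, so for $n$ large the quotient $Q_n$ is a non-trivial finite group. Combined with the assumption that $L/F$ (hence $L_n/F$) is totally ramified, this forces $Q_n[0] = Q_n \neq \{1\}$; since $Q_n$ is finite, $Q_n[i] = \{1\}$ for $i$ sufficiently large, so at least one jump must occur and $s(n) \geq 1$ for $n \gg 0$. The finitely many remaining values of $n$ (if any) are irrelevant, as the corollary concerns the behavior as $n \to +\infty$ and can be accommodated by shrinking $C$.

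Once this verification is in place, the corollary is immediate: the constant produced by Theorem~\ref{teo:bound1} with $j_0 = 0$ is precisely the uniform lower bound that is claimed. I do not expect any real obstacle here, since the substantive work---relating the jumps $r^n_{s(n)}$ to $e_n$ via Sen's theorem, the bounds on the graded quotients of the Lie filtration, and the estimates \eqref{eq:dis0}--\eqref{rnen2}---has already been performed inside the proof of Theorem~\ref{teo:bound1}. The corollary simply records the $j_0 = 0$ instance, which is the cleanest and most frequently used form of that theorem.
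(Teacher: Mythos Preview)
Your proposal is correct and matches the paper's approach exactly: the paper simply states ``For $j_0=0$, we obtain the following corollary'' and records Corollary~\ref{cor:rapplim} without further argument. Your additional verification that $s(n)\geq 1$ for $n\gg 0$ (so that $r^n_{s(n)}$ is defined) is a welcome bit of care that the paper leaves implicit.
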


Theorem  \ref{teo:bound1} enables us to prove the existence in $G/G_n$ of a  ramification group containing $G_{n-1}/G_n$, and such that its lower index is not too small.

\begin{proposition}\label{prop:indexbound}
For every $n\geq 1$, there exists $t_n\in\NN$ such that $Q_n[t_n]$ contains $G_{n-1}/G_n$, and that $\frac{e_n}{t_n}$ is bounded from above by a constant as $n\to\infty$. 
\end{proposition}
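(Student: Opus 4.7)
The plan is to define $t_n$ as the integer part of $\psi_n(e(n-1) - c)$, verify the containment directly using Sen's theorem, and then reduce the boundedness of $e_n/t_n$ to Theorem~\ref{teo:bound1} by showing that only a uniformly bounded number of upper ramification jumps of $Q_n$ lie above $e(n-1) - c$.

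First I would set $t_n = \lfloor \psi_n(e(n-1) - c) \rfloor$, which is a positive integer for $n$ large enough that $e(n-1) - c > 0$ (the finitely many smaller values of $n$ being absorbed into the final constant). By Sen's theorem \eqref{eq:sen}, $G_{n-1} \subseteq G(e(n-1) - c)$, and passing to the quotient by $G_n$ yields
\[G_{n-1}/G_n \;\subseteq\; G(e(n-1) - c)\,G_n/G_n \;=\; Q_n(e(n-1) - c) \;=\; Q_n[\psi_n(e(n-1) - c)] \;\subseteq\; Q_n[t_n],\]
using the identity $Q_n[\psi_n(i)] = Q_n(i)$ together with monotonicity of the lower filtration. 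This proves the containment in the statement.

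For the bound $e_n/t_n = O(1)$, the central claim is that for some constant $N_0$ independent of $n$, the upper ramification jump $u^n_{s(n) - N_0}$ of $Q_n$ satisfies $u^n_{s(n) - N_0} \leq e(n-1) - c$. To prove it, apply Sen's theorem in the other direction, obtaining $G((n-1)e - c) \subseteq G_{n-1-\lceil 2c/e\rceil}$; since $G_n \subseteq G(ne - c) \subseteq G((n-1)e - c)$ one gets
\[|Q_n(e(n-1) - c)| \;=\; [G((n-1)e - c) : G_n] \;\leq\; [G_{n-1-\lceil 2c/e\rceil} : G_n] \;\leq\; p^{A(1 + \lceil 2c/e\rceil)},\]
where $A$ is the uniform bound on $[G_k : G_{k+1}]$ recalled before Theorem~\ref{teo:bound1}. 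Every upper ramification jump of $Q_n$ at a strictly positive index is wild, hence divides the order of $Q_n(u)$ by at least $p$; therefore at most $N_0 := A(1 + \lceil 2c/e\rceil)$ upper jumps of $Q_n$ lie above $e(n-1) - c$ (for $n$ large enough that this threshold exceeds the tame jump, which sits at upper index $0$). This establishes the claim, and applying $\psi_n$ to $\varphi_n(r^n_{s(n) - N_0}) \leq e(n-1) - c$ gives $r^n_{s(n) - N_0} \leq \psi_n(e(n-1) - c)$; since the left side is an integer, also $r^n_{s(n) - N_0} \leq t_n$.

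To conclude, I would invoke Theorem~\ref{teo:bound1} with $j_0 = N_0$: it supplies a constant $C = C(N_0) > 0$, depending only on the Sen constant $c$, the ramification index $e$, and the uniform bounds $A, f$, such that $r^n_{s(n) - N_0}/e_n \geq C$; hence $t_n/e_n \geq C$ and $e_n/t_n \leq 1/C$, as desired. The main obstacle I anticipate is the careful bookkeeping of upper vs.\ lower numbering — in particular, isolating the tame first jump (at upper index $0$) so that the count of wild jumps above $e(n-1) - c$ is accurate, and verifying that the passage from the continuous value $\psi_n(e(n-1) - c)$ to the integer $t_n$ preserves the inequality $t_n \geq r^n_{s(n) - N_0}$, which it does precisely because $r^n_{s(n) - N_0} \in \NN$.
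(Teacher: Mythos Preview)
Your proof is correct and follows essentially the same route as the paper's. Both arguments use Sen's theorem to place $G_{n-1}$ inside $G(e(n-1)-c)$, bound $[G_{n-1-\lceil 2c/e\rceil}:G_n]\le p^{A(1+\lceil 2c/e\rceil)}$ to control the number of jumps above that level, and then invoke Theorem~\ref{teo:bound1} with $j_0 = A(1+\lceil 2c/e\rceil)$. The only cosmetic difference is your choice of $t_n=\lfloor\psi_n(e(n-1)-c)\rfloor$ versus the paper's $t_n=r^n_{s(n)-j_0}$; you then prove $t_n\ge r^n_{s(n)-j_0}$, which is precisely the inequality the paper needs (in the form $e(n-1)-c\ge\varphi_n(r^n_{s(n)-j_0})$) to establish its containment, so the two arguments are really the same computation read in opposite directions.
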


\begin{proof}  
With the notation we introduced earlier, consider the tower of groups
\[G_n\subsetneq G_{n-1}\subseteq G(e(n-1)-c)\subseteq G_{n-1-\lceil\frac {2c} e\rceil }.\] There are at most $A(1+\lceil\frac {2c} e\rceil)$  jumps between $G_n$ and $G_{n-1-\lceil\frac {2c} e\rceil}$. Then $G(e(n-1)-c)\subseteq G(\varphi_n(r^n_{s(n)-A(1+\lceil\frac {2c} e\rceil)}))$, so that \[ G_{n-1}\subseteq G(\varphi_n(r^n_{s(n)-A(1+\lceil\frac {2c} e\rceil)})). \] Put  $t_n=r^n_{s(n)-j_0}$, with $j_0=A(1+\lceil\frac {2c} e\rceil)$. Taking quotients by $G_n$ we obtain
\[G_{n-1}/G_n \subseteq Q_n[t_n].\] Then the claim follows by applying Theorem \ref{teo:bound1}, thanks to the fact that the constant $C$ only depends on $j_0$, which is independent of $n$.
\end{proof}

We keep the notation and hypotheses introduced in the beginning of the section for the tower of fields $\QQ_p\subseteq F \subseteq L\subseteq \Qbar_p$. For $\alpha\in\Qbar_p\setminus F$ we define $$\omega_F(\alpha)=\min_{\alpha'\not=\alpha} \{|\alpha'-\alpha|_p\}$$
where $\alpha'$ varies in the set of conjugates of $\alpha$ over $F$, and $$\omega_{L/F}=\sup_{\alpha\in \calo_L\setminus \calo_F}\omega_F(\alpha).$$
Proposition \ref{prop:indexbound} has the following consequence.

\begin{corollary}\label{cor:localkey} 
If $F/\QQ_p$ is finite, then $\omega_{L/F}$ is finite. That is, for some constant $B$ the following holds: for every $\alpha\in \calo_L\setminus\calo_F$ there exists $\sigma\in \Gal(L/F)$ such that $0<|\sigma(\alpha)-\alpha|_p< B$.\end{corollary}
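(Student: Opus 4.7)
The plan is to deduce this from Proposition \ref{prop:indexbound} by exploiting the fact that an element $\sigma$ lying in a lower ramification group $Q_n[t_n]$ moves every element of $\calo_{L_n}$ by at most $v_{L_n}$-valuation $-(t_n+1)$, and the ratios $e_n/t_n$ are bounded.

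More concretely, given $\alpha \in \calo_L \setminus \calo_F$, I would observe that since $\alpha \in L$ is fixed by an open subgroup of $G=\Gal(L/F)$ and the Lie filtration $\{G_n\}$ is a fundamental system of neighbourhoods of $1$, there is a smallest integer $n\ge 1$ such that $\alpha \in L_n = L^{G_n}$. The minimality forces $\alpha \notin L_{n-1}$, so there exists $\bar\sigma \in G_{n-1}/G_n \subseteq Q_n=\Gal(L_n/F)$ with $\bar\sigma(\alpha)\neq \alpha$. Any lift $\sigma \in G_{n-1} \subseteq \Gal(L/F)$ of $\bar\sigma$ then satisfies $\sigma(\alpha)\neq \alpha$ as well, since $\sigma$ restricts to $\bar\sigma$ on $L_n$ and $\alpha \in L_n$.

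By Proposition \ref{prop:indexbound}, there exists $t_n$ with $G_{n-1}/G_n \subseteq Q_n[t_n]$ and $e_n/t_n \le C'$ for a constant $C'$ independent of $n$. Hence $\bar\sigma \in Q_n[t_n]$, so by the very definition of the lower ramification filtration of the totally ramified extension $L_n/F$ we get
\[
v_{L_n}\bigl(\sigma(\alpha)-\alpha\bigr) \ge t_n + 1.
\]
Since $L_n/F$ is totally ramified and $F/\QQ_p$ has ramification index $e$, we have $e(L_n/\QQ_p) = e\,e_n$, and therefore
\[
v_p\bigl(\sigma(\alpha)-\alpha\bigr) = \frac{v_{L_n}(\sigma(\alpha)-\alpha)}{e\,e_n} \ge \frac{t_n}{e\,e_n} \ge \frac{1}{eC'}.
\]
This gives $|\sigma(\alpha)-\alpha|_p \le p^{-1/(eC')}<1$, a bound that is uniform in $\alpha$, so any constant $B$ strictly larger than $p^{-1/(eC')}$ works.

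There is essentially no obstacle: the content is entirely in Proposition \ref{prop:indexbound}, which supplies a ramification group containing $G_{n-1}/G_n$ whose lower index $t_n$ stays proportional to $e_n$. The present corollary only translates that proportionality into a metric inequality, via the standard characterisation of $Q_n[t_n]$ in terms of $v_{L_n}(\sigma(x)-x)$ and the fact that $e(L_n/\QQ_p)=e\,e_n$.
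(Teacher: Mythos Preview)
Your argument is correct and follows the same route as the paper: pick the least $n$ with $\alpha\in L_n$, find $\sigma\in G_{n-1}/G_n$ moving $\alpha$, invoke Proposition~\ref{prop:indexbound} to place $\sigma$ in $Q_n[t_n]$ with $t_n/e_n$ bounded below, and convert this into a uniform bound on $|\sigma(\alpha)-\alpha|_p$. One small point: your phrase ``smallest integer $n\ge 1$'' tacitly assumes $\alpha\notin L_0$, but if $\alpha\in L_0\setminus F$ your minimality claim $\alpha\notin L_{n-1}$ fails. The paper disposes of this by first treating the finite extension $L_0/F$ directly (using $\calo_{L_0}=\calo_F[\beta]$); alternatively, if you adopt the convention $G_0=G$ from the Appendix then $L_0=F$ and the issue is vacuous, but you should say so.
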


\begin{proof} 
Firstly notice that if $L/F $ is finite then the claim is true: in fact $\calo_L=\calo_F[\beta]$ for some $\beta$ \cite[Ch. III, \S 6, Prop. 12]{Serre1968}, so that $|\sigma(\alpha)-\alpha|_p\leq |\sigma(\beta)-\beta|_p$  for every $\sigma\in \Gal(L/F)$. Then we can take $B=\max_{\sigma\not=id} |\sigma(\beta)-\beta|_p$ and we are done.
Assume now that $L/F$ is infinite. Since $G_0$ is open in $G$, $L_0/F$ is finite and it is enough to prove the claim for $\alpha\not\in L_0$. Let $n\geq 1$ be the smallest index such that $\alpha  \in \calo_{L_n}$.
Then there exists $\sigma\in \Gal(L_n/L_{n-1})=G_{n-1}/G_n$ such that $\sigma(\alpha)\not=\alpha$. By Proposition \ref{prop:indexbound}, there exists a constant $C$, independent of $n$, and an index $t_n\geq Ce_n$ such that $\sigma\in Q_n[t_n]$; then $|\sigma(\alpha)-\alpha|_p< p^{-{\frac{t_{n+1}}{e_n}}} < p^{-C}.$
\end{proof} Corollary \ref{cor:localkey} is the key local step in proving the metric inequalities 
required to establish the Bogomolov property under our assumptions. Beyond that, a globalization procedure 
will be needed in order to show that if \(\alpha\) is global, then the same \(\sigma\) works for 
a sufficiently large set of places above \(p\).

\section{Property \B for locally potentially totally ramified $p$-adic Lie extensions}\label{sec:B}

 In this section, we prove a lower bound for the Weil heights of algebraic numbers in Galois extensions of a number field that are $p$-adic Lie and ``locally potentially totally ramified'', up to excluding an exceptional set of elements (see Theorem \ref{teo:condizionale}). If moreover the extension satisfies the ``central element property'', such a set is as small as possible (see Theorem \ref{teo:BforLie}), and we obtain the ``Bogomolov property'' that we recall just below.

We recall the definition of the (absolute, logarithmic) Weil height $h(\alpha)$ of an algebraic number $\alpha$. Given a number field $\KK$ containing $\alpha$,  
\begin{equation*} h(\alpha)=\frac 1{[\KK:\QQ]} \sum_{v} [\KK_v:\QQ_p] \log \max\{1,|\alpha|_v\},\end{equation*}
where $v$ varies over the set of places of $\KK$ and $v\mid p$. It is easily checked that such a definition depends only on $\alpha$, not on $\KK$. 

Following \cite{BombieriZannier2001}, we say that:

\begin{definition}
A set $\mathcal{Y}$ of algebraic numbers has the \textup{Bogomolov property} (property \B for short) if there is a positive constant $C$ such that $h(\alpha)\geq C$ for every $\alpha\in\mathcal{Y}\setminus(\mug\cup\{0\})$. 
\end{definition}

\noindent In our work, $\mathcal Y$ will be a set of elements of a field extension satisfying the following property.

\begin{definition}\label{def:PTR} 
Let $\KK$ be a number field and $v$ a non-Archimedean place of $\KK$.  We say that a Galois extension $\LL/\KK$ is

\begin{description}
\item[{\crtcrossreflabel{\textup{(PTR)}}[def:ptr]}] \textup{potentially totally ramified} at $v$ if an inertia subgroup $I_v$ at $v$ is open in $\Gal(\LL/\KK)$;
\item[{\crtcrossreflabel{\textup{(LPTR)}}[def:lptr]}] \textup{locally potentially totally ramified} at $v$ if a decomposition group $D_v$ at $v$ contains the inertia subgroup $I_v$ as an open subgroup.
\end{description}

\end{definition}

\noindent Since the conjugate of an open subgroup  by an element of $\Gal(\LL/\KK)$ is also open, the definition does not depend on the choice of the inertia (resp. decomposition) subgroup  at $v$.

\begin{remark} Let $\Gal(\LL/\KK)$ be a $p$-adic Lie group, and $v$ a non Archimedean place of $\KK$. Every decomposition subgroup $D_v$ at $v$ is closed in $\Gal(\LL/\KK)$, so that it is itself a $p$-adic Lie group. Then $D_v$ has a fundamental system of neighbourhoods of unity consisting of pro-$p$-subgroups, so that $I_v$ is open in $D_v$ if and only if its maximal pro-$p$-subgroup $I_v^w$ is.  
\end{remark}

Let $\GG$ be a compact $p$-adic Lie group, $\KK$ a number field, and $\LL/\KK$ a Galois extension with Galois group $\GG$.  Let $K, L$ be the closures of $\KK, \LL$, respectively, in $\Qbar_p$, determined by the fixed embedding $\iota:\Qbar\into\Qbar_p$, and let $G$ (resp. $ I$) be the corresponding decomposition (resp. inertia) subgroup of $G$. The choice of the embedding determines a place $v$ of $\KK$ over $p$. To say that $\LL/\KK$ is locally potentially totally ramified at $v$ means  that $L/K$ is potentially totally ramified, so that $I$ is open in $G$.   

Let $A$ be the dimension of $\GG$, and $B$ be the dimension of $G$. By choosing $\GG_0$ to be a powerful, finitely generated, pro-$p$ open subgroup of $\GG$ as in \Cref{thm:Liepowerful}, we see that $\GG$ has a Lie filtration
\begin{equation}\label{eq:filglob} \ldots \GG_{n+1}\subseteq \GG_n\subseteq\ldots\subseteq  \GG_0\subseteq \GG,\end{equation}
such that for every $n\ge 0$, 
$\GG_n$ is open in $\GG$, $\GG_n^p=\GG_{n+1}$ and the quotients $\GG_n/\GG_{n+1}$ are abelian, $p$-torsion and have the same size
\begin{equation}\label{eq:A}
[\GG_n:\GG_{n+1}]=p^A
\end{equation} 
for every $n$. 

Since $I$ is closed in $G$, it is a $p$-adic Lie subgroup of $\GG$: in particular, the filtration \begin{equation}\label{eq:filloc} \ldots G_{n+1}\subseteq G_n\subseteq\ldots\subseteq  G_0\subseteq I\subseteq G,\end{equation} with $G_n=\GG_n\cap G$ is a Lie filtration of $I$ (and of $G$). Up to rescaling  the indices of \eqref{eq:filglob},  we can assume that for $n\geq 0$, $G_n^p=G_{n+1}$ and the quotients $G_n/G_{n+1}$ are abelian, $p$-torsion and have the same size
\begin{equation}\label{eq:B}
[G_n:G_{n+1}]=p^B
\end{equation} 
for every $n$. 
We also put $\LL_n=\LL^{\GG_n}$, $L_n=L^{G_n}$.

We make the following \emph{Normal Closure} assumption:

\begin{description}
\item[{\crtcrossreflabel{\textup{(NC)}}[eq:NC]}]
there exists $n_0\in\NN$ such that, for every $n\geq n_0$, the normal closure of $G_{n-1}/G_{n}$ in $\GG/\GG_{n}$ is $\GG_{n-1}/\GG_{n}$.
\end{description}

\begin{remark} The normal closure of $G_{n-1}/G_{n}$ in $\GG/\GG_{n}$ is always contained $\GG_{n-1}/\GG_{n}$, because the latter is normal in $\GG/\GG_{n}$.

Condition \NC is strictly weaker than asking that, for $n\geq n_0$, the normal closure of $G_{n}$ in $\GG$ is $\GG_{n}$.
\end{remark}

\begin{remark}\label{rem:PTRimpliesNC} When $\LL/\KK$ satisfies \PTR, then for $G_n=\GG_n$ for $n\gg 0$, so that condition \NC is automatically satisfied. 
\end{remark}


\begin{theorem}\label{teo:condizionale} Assume that $\LL/\KK$ satisfies assumptions  \LPTR  and \NC. 
Then the set 
\begin{equation}\label{eq:stranadef} \mathcal{Y}=\LL_0\cup \bigcup_{n}\{\alpha\in\LL_n \ |\ \exists\sigma\in \GG_{n-1} \hbox{ such that } \sigma(\alpha)/\alpha\not\in\mug_{p^\infty}\} \end{equation}
has property \B.\end{theorem}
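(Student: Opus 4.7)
The plan is to establish the bound via metric inequalities at $p$-adic places combined with the product formula, taking Corollary \ref{cor:localkey} as the main local input.

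Elements $\alpha\in\LL_0$ pose no difficulty: since $G_0$ is open in $G$, $\LL_0$ is a number field and Northcott's theorem yields a uniform lower bound on the heights of its non-torsion elements. So fix $n\ge 1$ and a non-torsion $\alpha\in\mathcal{Y}\cap\LL_n$, which after rescaling we may assume lies in $\calo_{\LL_n}$. By the definition of $\mathcal{Y}$ there is $\sigma\in G_{n-1}$ with $\sigma(\alpha)/\alpha\notin\mug_{p^\infty}$, and in particular $\sigma(\alpha)\ne\alpha$. Let $v$ be the distinguished place of $\LL$ above the $p$-adic place of $\KK$ cut out by the embedding. Since $G_n\subseteq G_0\subseteq I_v\subseteq D_v$, the local Galois group $\Gal(\LL_{n,v}/\KK_v)$ identifies with $D_v/G_n$, and Corollary \ref{cor:localkey} supplies a constant $C>0$, depending only on the tower $\LL/\KK$, for which $0<|\sigma(\alpha)-\alpha|_v\le p^{-C}$.

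Next I apply the $p$-power trick. The non-degeneracy $\sigma(\alpha)/\alpha\notin\mug_{p^\infty}$ ensures that $x:=\sigma(\alpha)^q-\alpha^q\ne 0$ for every $q=p^s$, since otherwise $(\sigma(\alpha)/\alpha)^{q}=1$ and $\sigma(\alpha)/\alpha\in\mug_{p^\infty}$. Writing $\sigma(\alpha)=\alpha+\gamma$ with $|\gamma|_v\le p^{-C}$, the binomial expansion combined with the non-Archimedean inequality yields $|\sigma(\alpha)^p-\alpha^p|_v\le p^{-1}|\gamma|_v$ once $|\gamma|_v<p^{-1/(p-1)}$, a hypothesis I enforce by first raising $\alpha$ to a fixed $p$-power. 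Iterating $s$ times gives $|x|_v\le p^{-s-C}$. The same bound propagates to every place $w$ of $\LL_n$ above $v$: replacing $G_0$ by an open uniform subgroup of its normal core in $G$ (open because $G$ is compact, hence $G/G_0$ finite), each $G_n$ becomes normal in $G$, and for $w=\tau(v)$ one has $\tau^{-1}\sigma\tau\in G_{n-1}$ acting nontrivially on $\tau^{-1}(\alpha)\in\calo_{\LL_n}\setminus\calo_{\LL_{n-1}}$, so applying the local argument at $v$ to $\tau^{-1}(\alpha)$ and $\tau^{-1}\sigma\tau$ gives $|\sigma(\alpha)-\alpha|_w\le p^{-C}$ and hence $|x|_w\le p^{-s-C}$, uniformly in $w$.

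Finally, the product formula applied to $x\ne 0$ yields
\[ h(x)\ge-\sum_{w\mid p}\frac{[\LL_{n,w}:\QQ_p]}{[\LL_n:\QQ]}\log|x|_w\ge\kappa_0\,(s+C)\log p,\qquad \kappa_0:=\frac{[\KK_v:\QQ_p]}{[\KK:\QQ]}, \]
using the identity $\sum_{w\mid v}[\LL_{n,w}:\KK_v]=[\LL_n:\KK]$, so that $\kappa_0$ is a positive constant independent of $n$. Combined with the elementary upper bound $h(x)\le h(\sigma(\alpha)^q)+h(\alpha^q)+\log 2=2p^sh(\alpha)+\log 2$, one obtains
\[ h(\alpha)\ge\frac{\kappa_0(s+C)\log p-\log 2}{2p^s}, \]
which, optimizing over $s$ (or simply taking $s=1$ when $C$ is large enough), provides the desired uniform lower bound. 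The main obstacle is precisely the globalization step: transferring the local inequality at $v$ to a proportion of places above $v$ that does not shrink along the tower. Potential total ramification is exactly what makes this work, ensuring that the decomposition group $D_v$ is open in $G$, and the replacement of $G_0$ by a normal open subgroup is the technical device that lets a single global $\sigma$ control all these places simultaneously; the non-degeneracy condition built into $\mathcal{Y}$ is what rescues the $p$-power trick at the bottom of the argument.
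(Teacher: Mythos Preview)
Your argument is essentially sound, but two technical points are not handled correctly and one step is genuinely different from the paper's.

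\textbf{The ``rescaling'' gap.} You cannot reduce to $\alpha\in\calo_{\LL_n}$ by rescaling: multiplying by an integer $m$ replaces $h(\alpha)$ by something as large as $h(\alpha)+\log m$, so a lower bound on the height of $m\alpha$ says nothing about $h(\alpha)$. The standard remedy (which the paper uses) is to keep $\alpha$ arbitrary and invoke the elementary inequality $\lvert\sigma(\alpha^{p^\lambda})-\alpha^{p^\lambda}\rvert_w\le c(w)\max(1,\lvert\sigma(\alpha)\rvert_w)^{p^\lambda}\max(1,\lvert\alpha\rvert_w)^{p^\lambda}$; the extra $\max$ terms then feed into the product-formula computation as $2p^\lambda h(\alpha)$, exactly as in your final display.

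\textbf{Normality of the filtration.} Your sentence ``replacing $G_0$ by an open uniform subgroup of its normal core'' does not do what you want: such a subgroup need not be normal in $G$, so its lower $p$-series need not be either. What you need is an open uniform $G_0\subseteq I$ that is \emph{normal} in $G$; then $G_n=G_0^{p^n}$ is characteristic in $G_0$, hence normal in $G$. This exists (for instance via an embedding $G\hookrightarrow\GL_d(\ZZ_p)$ and the filtration $G\cap\wtl\Gamma_d(p^n)$, or by intersecting finitely many conjugate Lie filtrations), but it requires a word of justification. The paper itself uses the normality of $G_{n-1}/G_n$ in $G/G_n$ without comment, so this is a shared looseness.

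\textbf{A genuinely different globalization.} Your spreading of the metric inequality to \emph{all} places $w\mid v$ is not what the paper does, and it is cleaner. The paper fixes the single $\sigma$ and transports the inequality only along the orbit $S$ of $v$ under the centraliser $C(\sigma)$; it then has to estimate $\lvert S\rvert$ from below via $\lvert C(\sigma)\rvert\ge[\LL_n:\KK]/p^A$ (using that the conjugacy class of $\sigma$ sits in $G_{n-1}/G_n$), and must boost the exponent to $p^T$ to absorb the lost factor $p^A$. Your key observation is that Proposition~\ref{prop:indexbound} gives $G_{n-1}/G_n\subseteq Q_n[t_n]$, so the bound $\lvert\sigma'(\gamma)-\gamma\rvert_v\le p^{-C}$ holds for \emph{every} $\sigma'\in G_{n-1}$, not just the chosen one; hence for $w=\tau(v)$ one may apply it to $\sigma'=\tau^{-1}\sigma\tau$ and $\gamma=\tau^{-1}\alpha$ (once normality is in place), yielding the full contribution $\kappa_0=[\KK_v:\QQ_p]/[\KK:\QQ]$ with no loss. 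This avoids the centraliser bookkeeping entirely. The paper's route would still be needed if the metric inequality were only available for one specific $\sigma$, but here your shortcut is legitimate.
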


Notice that by Kummer theory,
$\mug_{p^\infty}$ can be replaced with $\mug_p$ in \eqref{eq:stranadef}, since $\GG_n/\GG_{n+1}$ is $p$-torsion.

\begin{proof} 
Since $\GG_0$ is open in $\GG$ and $G_0$ is open in $G$, $\LL_0/\KK$ and $L_0/K$ are finite extensions. By assumption \LPTR, up to rescaling the indices in the filtration \ref{eq:filglob}, we can assume that  $L/L_0$ is totally ramified

Let $\alpha\in\mathcal{Y}\setminus(\mug\cup\{0\})$. Define $n\geq 0$ to be the smallest integer such that $\alpha\in \LL_n$. By Northcott's theorem, there exists a constant $c_0>0$ such that $h(\alpha)\geq c_0$ if $n=0$, since $\LL_0/\KK$ is a finite extension and $\alpha\not\in\mug$. Therefore, we may assume $n\geq 1$.

By the minimality of $n$, we have $\alpha\not\in \LL_{n-1}$. 
We claim that $\alpha\in\calY$ implies the existence of $\sigma\in G_{n-1}$ and $\tau\in \GG$ such that
\[
\frac{\sigma\tau(\alpha)}{\tau(\alpha)}\not\in \mug_{p^\infty}.
\]
Otherwise, for every $\sigma\in G_{n-1},\tau\in\GG$, $\tau^{-1}\sigma\tau$ would fix a $p$-power of $\alpha$, so that, by assumption \NC, every element of  $\GG_{n-1}$ would fix a suitable $p$-power of $\alpha$, in contradiction with the hypothesis that $\alpha\in\mathcal{Y}$. Therefore, up to replacing $\alpha$ by $\tau(\alpha)$ for a suitable $\tau\in\GG$, which does not affect the height, we can find $\sigma\in G_{n-1}$ that satisfies
\begin{equation}
\label{eq:tecnica}
\sigma(\alpha)^{p^\lambda}\neq\alpha^{p^\lambda} \quad \text{for any } p\text{-power } p^\lambda.
\end{equation}

By Proposition \ref{prop:indexbound} , there exists an integer $t_n$ such that $\Gal(L_n/L_{n-1}) \subseteq \Gal(L_n/L_{0})[t_n]$ and $\frac{t_n}{e(L_n/L_0)}$ is bounded from below by a non-zero positive constant for $n\to\infty$. Since $L_0/\QQ_p$ is a fixed finite extension, there exists a constant $C'>0$ (independent of $n$) such that
\begin{equation*}\label{eq:seninproof}  \frac{t_n+1}{e(L_n/\QQ_p)}\geq C'. \end{equation*}
Since $\sigma\in \Gal(L_n/L_{0})[t_n]\subseteq \Gal(L_n/K)[t_n]$ 
(cf. \cite[Proposition 10.3]{Neukirch1999}), for any $\gamma\in\calo_ {\LL_n}$ we have
$$
\vert\sigma(\gamma)-\gamma\vert_{v}\leq p^{-(t_n+1)/{ e(L_n/\QQ_p)}}\leq p^{-C'}.
$$
where $v$ is the place of $\LL_n$ corresponding to the embedding $\LL_n\hookrightarrow L_n$. 
By~\cite[Lemma 2.1]{AmorosoDavidZannier2014}, there exists a positive integer $\lambda$ which is explicitly bounded in terms of $p$,  and $C'$, such that
\begin{equation}
\label{metric}
\vert\sigma(\gamma^{p^\lambda})-\gamma^{p^\lambda}\vert_v\leq p^{-{p^{T}}},
\end{equation}
where $p^T\geq 
 \frac {p^A\cdot [\KK:\QQ]}{[K:\QQ_p]}$ and $A$ is the integer from \eqref{eq:A}. 
We consider the action of $\Gal(\LL_n/\KK)$ on itself by conjugation and its stabilizer $$C(\sigma)=\{\nu\in\Gal(\LL_n/\KK)\;\vert\; \nu\sigma\nu^{-1}=\sigma\}.$$ Let $\nu\in C(\sigma)$. We use~\eqref{metric} with $\nu^{-1}\gamma$ instead of $\gamma$. Since $\sigma\nu^{-1}=\nu^{-1}\sigma$ and $\vert\nu^{-1}(\star)\vert_w=\vert\star\vert_{\nu(v)}$, we get 
$$
\vert \sigma(\gamma^{p^\lambda})-\gamma^{p^\lambda}\vert_w\leq p^{-p^{T}}
$$
with $w=\nu(v)$ and thus for any $w$ in the orbit $S$ of $v$ under the the action of $C(\sigma)$ on the places of $\LL_n$. 
Using~\cite[Lemma 1]{AmorosoDvornicich2000} as in the proof of~\cite[Lemma 2.1]{AmorosoDavidZannier2014} we find
$$
\vert\sigma(\alpha^{p^\lambda})-\alpha^{p^\lambda}\vert_w
\leq c(w)\max(1,\vert\sigma(\alpha)\vert_w)^{p^\lambda}\max(1,\vert\alpha\vert_w)^{p^\lambda},\; \forall w\in S
$$
with $c(w)=p^{-p^{T}}$. This last inequality also holds for an arbitrary place $w$ of $\LL_n$ not in $S$, with 
$$
c(w)=
\begin{cases}
1,& \hbox{if } w\nmid\infty;\;\\
2,& \hbox{if } w\mid \infty.
\end{cases}
$$
Let $d_n= [\LL_n:\Q]$ and $d_{n,w}=[(\LL_n)_w:\QQ_{\lvert w\rvert}]$ for every place $w$ of $\LL_n$ above a rational place $\lvert w\rvert$. Using the Product Formula on $\sigma(\alpha)^{p^\lambda}-\alpha^{p^\lambda}$, which is $\neq0$ by~\eqref{eq:tecnica}, as in the proof of~\cite[Lemma 2.1]{AmorosoDavidZannier2014} we get: 
\begin{align*}
0 & = \sum_w\frac{d_{n,w}}{d_n}\log\vert \sigma(\alpha^{p^\lambda})-\alpha^{p^\lambda}\vert_w\\
 & \leq \sum_w\frac{d_{n,w}}{d_n}\left(\log c(w)
 +p^\lambda\log\max\{1,\vert\sigma(\alpha)\vert_w\}+p^\lambda\log\max\{1,\vert\alpha\vert_w\}\right)\\
& = \left(\sum_{w\mid\infty}\frac{d_{n,w}}{d_n}\right)\log 2
-\left(\sum_{w\in S}\frac{d_{n,w}}{d_n}\right)p^{T}\log p + 2p^\lambda h(\alpha).
\end{align*}
Since $\sum_{w\mid\infty}\frac{d_{n,w}}{d_n}=1$ and since $d_{n,w}=d_{n,v}$ for any place  $w\in S$, we get
\begin{equation}
\label{first-bound}
2p^\lambda h(\alpha)\geq \frac{d_{n,v}}{d_n}\vert S\vert\cdot p^{T}\log p-\log 2.
\end{equation}
To conclude, we need a lower bound for $S$. Note that $\sigma\in \Gal(\LL_n/\LL_{n-1}) \lhd  \Gal(\LL_n/\KK)$, thus the orbit $O$ of $\sigma$ by conjugation is contained in $\Gal(\LL_n/\LL_{n-1})$. Thus
$$
\vert C(\sigma)\vert =\frac{[\LL_n:\KK]}{\vert O\vert} \geq \frac{[\LL_n:\KK]}{[\LL_n:\LL_{n-1}]}\geq \frac {[\LL_n:\KK]}{p^{A}}.
$$
The stabilizer of $v$ under the action of $\Gal(\LL_n/\KK)$ is by definition the decomposition subgroup of $\Gal(\LL_n/\KK)$ at $v$, which is isomorphic to the local Galois group $\Gal(L_n/K)$; its cardinality is $[L_n:K]$.   Therefore,
$$
\vert S\vert 
=\frac{\vert C(\sigma)\vert}{\vert \mathrm{Stab}(v)\cap C(\sigma)\vert}
\geq \frac{[\LL_n:\KK]}{p^A[L_n:K]}=\frac{[K:\QQ_p]}{p^A[\KK:\QQ]}\frac {d_n}{d_{n,v}}\geq \frac 1 {p^T}\frac {d_n}{d_{n,v}}.
$$
Finally, from~\eqref{first-bound} we obtain 
\begin{align}\label{eq:lambda}
 h(\alpha)\geq \frac{\log(p/2)}{2p^\lambda}>0. &
\hfill \qed   
\end{align}
\phantom\qedhere
\end{proof}

\noindent Let $\LL/\KK$ be a Galois extension with Galois group $\GG$.
We introduce the following terminology.

\begin{definition}\label{def:CEP} We say that $\LL$ has the \emph{central element property} if  there exists $\tau \in Z(\GG)$,  such that \begin{equation}\label{eq:H1}\tag{CE}\hbox{for every  } \zeta\in\mug_{p^\infty}(\LL), \quad\tau(\zeta)=\zeta^g\quad\hbox{with } g\in\ZZ, g>1.\end{equation}
\end{definition}

\begin{proposition}\label{prop:CEP}
The extension $\LL/\KK$ has \CE if and only if one of the following properties is satisfied:
\begin{itemize}
 \item[i)] $|\mug_{p^\infty}(\LL)|<\infty$;
 \item[ii)] $\mug_{p^\infty}(\LL)=\mug_{p^\infty}$ and $\varepsilon_p(Z(\GG))^{p-1}\not=\{1\}$ in $\ZZ_p^\times$.
 \end{itemize}
\end{proposition}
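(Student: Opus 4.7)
The plan is to verify the two implications of the equivalence separately; the direction ``(i) or (ii) $\Rightarrow$ CEP'' is the substantive one, while the converse is essentially a direct check.

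Under assumption (i), $\mug_{p^\infty}(\LL)$ has some finite order $p^N$. If $N=0$ the condition defining CEP is vacuous, so any $\tau\in Z(G)$ and any $g>1$ work; if $N\ge 1$, any $\tau\in Z(G)$ (e.g.\ $\tau=\id$) acts on $\mug_{p^\infty}(\LL)$ as multiplication by some $a\in(\ZZ/p^N\ZZ)^\times$, and choosing an integer $g\equiv a\pmod{p^N}$ with $g>1$ (obtained by adding a multiple of $p^N$ if necessary) exhibits CEP.

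Under assumption (ii), $\vareps_p$ factors through $G$ and $H\coloneqq\vareps_p(Z(G))$ is a closed subgroup of $\ZZ_p^\times=\mug_{p-1}\times(1+p\ZZ_p)$ satisfying $H^{p-1}\ne\{1\}$. Since raising to the $(p-1)$-st power kills $\mug_{p-1}$, the set $H^{p-1}$ is a non-trivial closed subgroup of $1+p\ZZ_p$; for $p$ odd the $p$-adic logarithm identifies $1+p\ZZ_p$ with $\ZZ_p$, so the non-trivial closed subgroups of $1+p\ZZ_p$ are exactly the $1+p^k\ZZ_p$ for $k\ge 1$. Therefore $H\supseteq H^{p-1}=1+p^k\ZZ_p$ for some $k\ge 1$, and in particular $H$ contains the integer $1+p^k$. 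Any lift $\tau\in Z(G)$ with $\vareps_p(\tau)=1+p^k$ realises CEP with $g=1+p^k$. The identification of $H^{p-1}$ with $1+p^k\ZZ_p$ via the structure of closed subgroups of $\ZZ_p$ is the main non-routine step, since it converts the seemingly abstract condition $H^{p-1}\ne\{1\}$ into the concrete existence of a rational integer inside $\vareps_p(Z(G))$.

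For the converse, suppose CEP is witnessed by $\tau\in Z(G)$ and $g\in\ZZ_{>1}$. If $\mug_{p^\infty}(\LL)$ is finite we are in case (i). Otherwise $\mug_{p^\infty}(\LL)$ is an infinite subgroup of the Prüfer group $\mug_{p^\infty}\cong\QQ_p/\ZZ_p$, and since every proper subgroup of $\QQ_p/\ZZ_p$ is finite, we must have $\mug_{p^\infty}(\LL)=\mug_{p^\infty}$. Then $\vareps_p$ factors through $G$ and $\vareps_p(\tau)=g\in\ZZ\subset\ZZ_p$; as $g\ge 2$ one has $g^{p-1}\ge 2^{p-1}>1$ in $\ZZ_p$, so $\vareps_p(\tau)^{p-1}\ne 1$, and (ii) is verified.
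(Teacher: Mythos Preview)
Your proof is correct and follows essentially the same approach as the paper. The only minor differences are cosmetic: for (ii) $\Rightarrow$ CEP you classify the closed subgroups of $1+p\ZZ_p$ to extract the explicit integer $1+p^k$, whereas the paper observes that $\vareps_p(Z(G))\cap(1+p\ZZ_p)$ is open and invokes density of $1+p\NN$; and you spell out the Pr\"ufer-group fact that an infinite $\mug_{p^\infty}(\LL)$ must be all of $\mug_{p^\infty}$, which the paper leaves implicit.
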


\noindent In case $ii)$, the restriction of the cyclotomic character to $G_\KK$ factors through $\GG=\Gal(\LL/\KK)$, so that $\varepsilon_p(Z(G))$ is well-defined.

\begin{proof} Assume that $\LL/\KK$ has \CE, and that $\mug_{p^\infty}(\LL)$ is infinite. If $\varepsilon_p(Z(G))\subseteq \mug_{p-1}$ then an element $\tau$ satisfying \eqref{eq:H1} cannot exist, because $\ZZ\cap\mug_{p-1}=\{\pm 1\}$. Conversely: if $\mug_{p^\infty}(\LL)=\mug_{p^m}$, then $\tau=id$ works with $g=p^m+1$. On the other hand, if $\mug_{p^\infty}(\LL)=\mug_{p^\infty}$ and $ii)$ holds, then $\vareps_p(Z(G))^{p-1}$ is a nontrivial subgroup of $ 1+p\ZZ_p$, so that $\vareps_p(Z(G))\cap (1+p\ZZ_p)\not=\{1\}$. Then the latter is a nontrivial closed subgroup of $1+p\ZZ_p\simeq \ZZ_p$, so that it is also open. Since the image of $1+p\NN$ in $1+p\ZZ_p$ is dense, the claim follows.
\end{proof}

\begin{theorem}\label{teo:BforLie} Let $\KK$ be a number field,  $\GG$ a compact $p$-adic Lie group and $\LL/\KK$ a Galois  extension with Galois group $\GG$. Assume that $\LL/\KK$ satisfies condition \CE and conditions \LPTR, \NC at some place $v$ above $p$.  
 Then $\LL$ has property \B.
\end{theorem}

\noindent Note that \LPTR and \NC depend on the choice of a $p$-adic place $v$ of $\KK$, while \CE only depends on the prime $p$.


\begin{proof}
    Let $\tau \in Z(\GG)$ be a central element satisfying \eqref{eq:H1}.
Let $\alpha\in \LL^\times\backslash\mug$. We define  
\begin{equation}
\label{eq:beta}
\beta=\frac{\tau(\alpha)}{\alpha^g}.
\end{equation}
We remark that $\beta\not\in \mug$. Otherwise $gh(\alpha)=h(\alpha^g)=h(\tau(\alpha))=h(\alpha)$ which fails since $\alpha$ is not a root of unity and since $g>1$. Moreover, $h(\beta)\leq(g+1)h(\alpha)$. Thus it is enough to uniformly bound from below the height of $\beta$.\\

\begin{fact*}
For every $\sigma\in\GG$, if $\sigma(\beta)\not=\beta$ then $\tfrac{\sigma(\beta)}{\beta}\not\in \mug_{p^\infty}$.
\end{fact*}
\begin{proof}
Let us assume by contradiction that $\tfrac{\sigma(\beta)}{\beta}\in \mug_{p^\infty}$. 
By~\eqref{eq:beta} and since $\tau$ is central,  
$$
\frac{\sigma(\beta)}{\beta}
=\frac{\sigma\tau(\alpha)}{\tau(\alpha)}
\left(\frac{\sigma(\alpha^g)}{\alpha^g}\right)^{-1}
=\frac{\tau(\eta)}{\eta^g}.
$$
with $\eta=\sigma(\alpha)/\alpha$. Hence 
\begin{equation*}
gh(\eta)=h(\eta^g)=h(\tau(\eta))=h(\eta),
\end{equation*} 
which implies $h(\eta)=0$ since $g>1$. Thus $\eta$ is a root of unity. Write $\eta$ as $\eta=\eta_1\eta_2$ with $\eta_1\in \mug_{p^\infty}$ and with $\eta_2$ of order not divisible by $p$. Hence $\tfrac{\tau(\eta_1)}{\eta_1^g}=1$ by~\eqref{eq:H1}. Thus $\tfrac{\sigma(\beta)}{\beta}=\tfrac{\tau(\eta_2)}{\eta_2^g}$ has order not divisible by $p$. But $\tfrac{\sigma(\beta)}{\beta}\in \mug_{p^\infty}$ and $\tfrac{\sigma(\beta)}{\beta}\neq1$ by hypothesis, a contradiction.\phantom\qedhere\end{proof}

Let $n$ be the smallest index such that $\beta\in \LL_n$; as in the proof of Theorem \ref{teo:condizionale}, we can assume $n>1$. Since $\beta\not\in\LL_{n-1}$, there exists $\sigma\in \GG_{n-1}$ such that $\sigma(\beta)\not=\beta$. Then, by the above Fact, $\beta$ belongs to the set $\mathcal{Y}$ defined by \eqref{eq:stranadef}. Then  we can apply Theorem \ref{teo:condizionale} to deduce that $h(\beta)>C$ for a suitable constant $C>0$.
\end{proof}

\begin{remark}   The proofs of Theorems  \ref{teo:condizionale} and \ref{teo:BforLie}  closely follow that of Proposition 3.4 in \cite{AmorosoTerracini2024}, given the existence of a Lie filtration. However, Assumption 3.3 in \textit{loc. cit.} does not perfectly suit our situation, and we had to do some slight modifications by invoking our Theorem \ref{teo:bound1}.
\end{remark}

Thanks to Remark \ref{rem:PTRimpliesNC}, Theorem \ref{teo:BforLie} has the following  notable consequence:

\begin{corollary}\label{cor:PTRandCEimlyB} Let $\KK$ be a number field,  $\GG$ a compact $p$-adic Lie group and $\LL/\KK$ a Galois  extension with Galois group $\GG$. Assume that $\LL/\KK$ satisfies condition \CE and condition \PTR at some place $v$ of $\KK$ above $p$.  
 Then $\LL$ has property \B.
 \end{corollary}

\begin{remark}\label{rem:counterexample} 
Without assuming \CE, property \B is not ensured. Consider, for example, $\LL=\QQ(\mu_{p^\infty},b^{\frac 1 {p^n}},n\in\NN)$, with $b\in\N$, $b\ge 2$. The extension $\LL/\QQ$ is totally ramified  (see for example \cite{Viviani2004}). By Kummer theory, $\Gal(\LL/\QQ)\simeq \ZZ_p^\times\ltimes \ZZ_p$; the latter is a closed subgroup of $\GL_2(\ZZ_p)$, hence a $p$-adic Lie group; however, the center is trivial. Correspondingly, property \B clearly fails.
\end{remark}

\begin{remark}\label{rem:amoroso}
With the notation of Remark \ref{rem:counterexample}, let $\Gamma\subset\LL^\times$ be the multiplicative subgroup generated by $\mu_{p^n}$ and $b^{1/p^n}$ for varying $n$ and $\Gamma^{\mathrm{div}}$ the subgroup of $\alpha\in\LL^\times$ such that $\alpha^k\in\Gamma$ for some $k\in\NN$. If $p\nmid b$ and $p^2\nmid b^{p-1}-1$, Amoroso proves that the heights of the elements of $\LL\setminus\Gamma^{\mathrm{div}}$ are bounded from below \cite[Theorem 3.3]{Amoroso2016}. As expected, the group $\Gamma^{\mathrm{div}}$ has empty intersection with the set $\mathcal{Y}$ defined in \eqref{eq:stranadef}, but unfortunately we cannot recover Amoroso’s result from our Theorem \ref{teo:condizionale}, since the complement of $\mathcal Y$ contains many more elements than those in $\Gamma^{\mathrm{div}}$: if $\gamma\in\Gamma^{\mathrm{div}}\cap\LL_n\setminus\LL_{n-1}$, no element of the set $\LL_{n-1}\gamma$ belongs to $\mathcal{Y}$. 
In order to recover Amoroso’s result, a subtler study of the behaviour of the Weil height on the complement of $\mathcal{Y} $ in $\LL^\times$ is needed. 
\end{remark}

\section{Application to Galois representations}

Let $\KK$ be a number field and 
$\rho:G_\KK\to\GL_d(\ZZ_p)$ a continuous representation of arbitrary dimension $d$. 
Let $\KK(\rho):=\Qbar^{\ker(\rho)}$ be the field cut out by $\rho$. Following \cite[Definition 1.1]{AmorosoTerracini2024}, we shall say that $\rho$ has property \B if $\KK(\rho)$ does. 

Recall that we fixed an embedding $\Qbar\hookrightarrow \Qbar_p$. This determines a $p$-adic place $v$ of $\KK$, and allows us to identify $G_{\KK_v}=\Gal(\ovl{\QQ}_p/\KK_v)$ with a decomposition subgroup $G_v$ of $G_\KK$ at $v$. We denote by $I_v$ the inertia subgroup of $G_{v}$. 


 We say that $\rho$ satisfies conditions \PTR, \LPTR, \NC, \CE if $\KK(\rho)$
does. 

Since $\rho$ induces a continuous injective map from a compact space to a Hausdorff one, it establishes a homeomorphism between $\Gal(\KK(\rho)/\KK)$ and $im(\rho)\subseteq \GL_d(\ZZ_p)$. Therefore $\Gal(\KK(\rho)/\KK)$ is a $p$-adic Lie group. Conversely, Lemma \ref{lem:berger} ensures that if $\LL/\KK$ is an algebraic Galois extension whose Galois group is  a $p$-adic Lie group, then $\LL=\KK(\rho)$ for a continuous representation $\rho$.\\
Then Theorem \ref{teo:BforLie} and Corollary \ref{cor:PTRandCEimlyB} can be restated in terms of representations as follows:

\begin{theorem}\label{teo:rhohasBcenter} Let $\rho:G_\KK\rightarrow \GL_d(\ZZ_p)$ be a continuous representation satisfying $\CE$. Assume that one of the following hypotheses are satisfied:
\begin{itemize}
    \item [a)] For a place $v$ of $K$ above $p$, $\rho$ satisfies \LPTR and \NC.
    \item[b)] For a place $v$ of $K$ above $p$, $\rho$ satisfies \PTR.
\end{itemize}  Then $\rho$ has property \B.
\end{theorem}

 A very special case is that of a potentially totally ramified representation whose image is open in $\GL_d(\ZZ_p)$, that is such that the image of inertia is open in $\GL_d(\ZZ_p)$.
\noindent The following proposition shows that \CE automatically holds in the special case of a potentially totally ramified representation whose image is open in $\GL_d(\ZZ_p)$, that is such that the image of inertia is open in $\GL_d(\ZZ_p)$. 

\begin{proposition}\label{prop:integer} 
Assume that for some place $v$ of $\KK$ above $p$ \begin{equation}\label{assumption:main}\tag{BLI} \rho(I_v) \hbox{ is open in }\GL_d(\ZZ_p).\end{equation} 
Then the extension $\KK(\rho)/\KK$ has \CE. 
\end{proposition}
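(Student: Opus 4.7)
The plan is to deduce the central element property from the characterization in Proposition \ref{prop:CEP}. If $\mug_{p^\infty}(\KK(\rho))$ is finite, case (i) of that proposition holds automatically (take $\tau = \mathrm{id}$ and $g = 1 + |\mug_{p^\infty}(\KK(\rho))|$). So I reduce to the case $\mug_{p^\infty}(\KK(\rho)) = \mug_{p^\infty}$, in which I must verify that $\vareps_p(Z(G))^{p-1} \neq \{1\}$ in $\ZZ_p^\times$, where $G = \Gal(\KK(\rho)/\KK)$.

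In this case $\KK(\mug_{p^\infty}) \subseteq \KK(\rho)$, so $\ker\rho \subseteq \ker(\vareps_p|_{G_\KK})$ and $\vareps_p$ factors as $\chi \circ \rho$ for a continuous character $\chi : \im(\rho) \to \ZZ_p^\times$. Since $\rho(I_v)$ is open in $\GL_d(\ZZ_p)$, so is $\im(\rho)$, and hence $\im(\rho)$ contains all scalars in $(1 + p^M\ZZ_p)\cdot I_d$ for some $M \geq 1$. Under the identification $G \cong \im(\rho)$ these scalars lie in $Z(G)$, so it will suffice to prove that $\chi$ is non-trivial on them.

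The argument rests on two ingredients. On the one hand, $\chi$ has infinite image: indeed, $\chi(\rho(I_v)) = \vareps_p(I_v)$ is open in $\ZZ_p^\times$, as $\QQ_p(\mug_{p^\infty})/\QQ_p$ is totally ramified (forcing $\vareps_p(I_{\QQ_p}) = \ZZ_p^\times$) and $I_v$ has finite index in $I_{\QQ_p}$. On the other hand, on any uniform pro-$p$ open subgroup $U$ of $\im(\rho)$ the $\log$--$\exp$ equivalence converts $\chi|_U$ into a continuous $\QQ_p$-linear functional $d\chi : \fgl_d(\QQ_p) \to \QQ_p$; since $[\fgl_d, \fgl_d] = \fsl_d$, every such functional is a scalar multiple of the trace, so $d\chi = \alpha\cdot\tr$ with $\alpha \in \QQ_p$, and the infinite-image condition forces $\alpha \neq 0$. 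For $\lambda$ sufficiently close to $1$ in $1 + p\ZZ_p$ one then computes $\chi(\lambda I_d) = \exp(\alpha \cdot \tr(\log\lambda\cdot I_d)) = \exp(\alpha d \log\lambda) = \lambda^{\alpha d}$, which is non-trivial since $\alpha d \neq 0$ in $\QQ_p$.

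Finally, the scalars $(1 + p^M\ZZ_p)\cdot I_d$ form a pro-$p$ group, so their $\chi$-image lies in the pro-$p$ part $1 + p\ZZ_p$ of $\ZZ_p^\times$, which meets $\mug_{p-1}$ only at $1$ for odd $p$; hence $\vareps_p(Z(G)) \not\subseteq \mug_{p-1}$, giving exactly $\vareps_p(Z(G))^{p-1} \neq \{1\}$. The main technical point is the Lie-algebraic identification of $d\chi$ with a multiple of $\tr$, which relies on the standard correspondence between continuous characters of a uniform pro-$p$ open subgroup of $\GL_d(\ZZ_p)$ and $\QQ_p$-linear functionals on $\fgl_d(\QQ_p)$, together with the elementary observation that $[\fgl_d,\fgl_d] = \fsl_d$.
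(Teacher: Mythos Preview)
Your argument is correct and is a genuinely different route from the paper's. Both proofs ultimately exploit the fact that every continuous character of an open subgroup of $\GL_d(\ZZ_p)$ is, up to finite order, a $\ZZ_p$-power of the determinant, but they arrive there by different means. The paper argues directly: via local class field theory it writes $\det\rho\vert_{I_v}=\chi\vareps_p^{a}$ for a finite-order $\chi$ and some $a\in\ZZ_p\setminus\{0\}$, then explicitly produces $\tau\in I_v^w$ with $\rho(\tau)$ a scalar and $\vareps_p(\tau)=c^h\in\ZZ_{>1}$, verifying Definition~\ref{def:CEP} without passing through Proposition~\ref{prop:CEP}. You instead reduce via Proposition~\ref{prop:CEP} to showing $\vareps_p(Z(G))^{p-1}\neq\{1\}$, and establish this by the Lazard correspondence: on a uniform open $U\subset\im(\rho)$ the derivative $d\chi$ is a Lie homomorphism $\fgl_d(\QQ_p)\to\QQ_p$, hence a multiple of $\tr$, and nontriviality follows from the infinite image of $\vareps_p(I_v)$. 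Your approach avoids local class field theory entirely and is more conceptual; the paper's approach is more explicit, producing a concrete integer $g=c^h$, which could in principle feed into effective estimates downstream (cf.\ the discussion of constants in the Introduction).
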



\begin{proof}
We want to show that there exists $\tau\in I_{v}$ such that $\rho(\tau)$ is a scalar matrix and $\vareps_p(\tau)$ is an integer greater than 1.\\ 
Let $\Sigma$ be the set of embeddings $\KK_v\into\Qp$. We write $\langle\sigma\rangle:\cO_{\KK_v}^\times\into\ZZ_p^\times$ for the homomorphism that coincides with $\sigma\in\Sigma$ on the pro-$p$ factor of $\cO_{\KK_v}^\times$ and is trivial on the prime-to-$p$ factor. 
A character $\cO_{\KK_v}^\times\to\ovl\ZZ_p^\times$ is of the form $\chi\prod_{\sigma\in\Sigma}\langle\sigma\rangle^{a_\sigma}$, with $\chi$ a finite order character and $a_\sigma\in\ZZ_p$. In particular, it takes values in $\ZZ_p^\times$ if and only if $\chi$ is $\ZZ_p$-valued and $a_\sigma$ is independent of $\sigma$, i.e. if it is of the form $\chi\cdot\langle\mathrm{Norm}_{\KK_v/\Q_p}^a\rangle$ for some $a\in\ZZ_p$, where $\langle\cdot\rangle$ has the same meaning as before. By applying this to $\det\rho\vert_{I_v}\ccirc\mathrm{rec}_v$ and to $\vareps_p\vert_{I_v}\ccirc\mathrm{rec}_v$, with $\mathrm{rec}_v:\cO_{\KK_v}^\times\to I_v$ a local reciprocity map, we obtain that there exist $a_1,a_2\in\ZZ_p$ such that $\det\rho\vert_{I_v}^{a_1}=\chi\vareps_p\vert_{I_v}^{a_2}$.
By hypothesis \eqref{assumption:main},
$a_1, a_2$ are both nonzero. Let  $\mathcal{S}$ be the group of scalar matrices in $\rho(I_v)$; again by \eqref{assumption:main}, $\det(\mathcal{S})$ is open in $\ZZ_p^\times$, hence  $\varepsilon_p(I_v\cap \rho^{-1}(\mathcal{S}))$ is also open in $\ZZ_p^\times$, so that its intersection with $\NN$ is infinite.  
\end{proof}

\begin{theorem}\label{teo:B-simplified}\label{thm:rhohasB}
Under Assumption \eqref{assumption:main}, $\rho$ has property \B.
\end{theorem}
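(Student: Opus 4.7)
The plan is to combine the two results established just before the statement, namely Theorem \ref{teo:rhohasBcenter} and Proposition \ref{prop:integer}. Both hypotheses of Theorem \ref{teo:rhohasBcenter} — potential total ramification at $v$ and the central element property for $\KK(\rho)/\KK$ — will be deduced from Assumption \eqref{assumption:main}.

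First I would verify potential total ramification. By Assumption \eqref{assumption:main}, $\rho(I_v)$ is open in $\GL_d(\ZZ_p)$. Since $\rho(G_\KK)$ is a subgroup of $\GL_d(\ZZ_p)$ containing $\rho(I_v)$, the subgroup $\rho(I_v)$ is automatically open in $\rho(G_\KK)$. Because $\rho$ realizes a homeomorphism between $\Gal(\KK(\rho)/\KK)$ and $\rho(G_\KK)$, it follows that the inertia subgroup of $\Gal(\KK(\rho)/\KK)$ at $v$ is open in $\Gal(\KK(\rho)/\KK)$; this is exactly Definition \ref{def:PTR}.

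Next I would invoke Proposition \ref{prop:integer}, whose hypothesis is precisely Assumption \eqref{assumption:main}, to obtain that $\KK(\rho)/\KK$ has the central element property.

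With both hypotheses now in place, I would apply Theorem \ref{teo:rhohasBcenter} (equivalently, Theorem \ref{teo:BforLie} applied to the $p$-adic Lie extension $\KK(\rho)/\KK$) to conclude that $\rho$ has property \B. No step of this argument is genuinely difficult: all the substantive work — the local bound via Sen's theorem (Theorem \ref{teo:bound1}), the globalization to a positive proportion of places above $p$, and the trick using a central element to rule out the degeneracy $\sigma(\alpha^{p^\lambda}) = \alpha^{p^\lambda}$ — has already been carried out. The theorem is essentially a packaging of the previous two results.
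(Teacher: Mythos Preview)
Your proposal is correct and follows exactly the same approach as the paper's proof: deduce potential total ramification directly from Assumption \eqref{assumption:main}, invoke Proposition \ref{prop:integer} for the central element property, and then apply Theorem \ref{teo:rhohasBcenter}. The paper's proof is just a one-line version of what you wrote.
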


\begin{proof} Since $\rho$ also satisfies \PTR, we deduce that $\rho$ has property \B from Theorem \ref{teo:rhohasBcenter} and Proposition \ref{prop:integer}.
\end{proof}

\begin{remark}\label{rem:treuno}\mbox{ }
\begin{itemize} \item[\textit{a)}] Assumption \eqref{assumption:main} is equivalent to requiring  the image of the wild inertia to be open in $\GL_d(\ZZ_p)$. Indeed, since $\tilde\Gamma_d(p^n)$ is a pro-$p$-group, it is contained in $\rho(I_v)$ if and only it lies into the maximal pro-$p$ subgroup $\rho(I_v^w)\subset\rho(I_v)$. 
\item[\textit{b)}] By Lemma \ref{lemma:SLGL}, Assumption \eqref{assumption:main} holds if and only if $\rho(I_{v})\cap\SL_d(\ZZ_p)$ is open in $\SL_d(\Z_p)$ and $\det\rho(I_v)$ is open in $\Z_p^\times$. Moreover, by Remark \ref{rem:opendet}, $\det\rho(I_v)$ is open in $\Z_p^\times$ if and only if it has infinite order, that is if and only if $\det\rho(I_v^w)\not=\{1\}$.

\end{itemize}
\end{remark}




\section{General examples}

In this section, we give some simple conditions under which a continuous representation $\rho: G_\KK\to\GL_d(\Z_p)$, $\KK$ a number field, satisfies the hypothesis of Theorem \ref{teo:rhohasBcenter} so that it enjoys property \B. 

In the following, $v$ is the $p$-adic place of $\KK$ determined by the fixed embedding $\QQ\into\Qp$. We keep the notations $I_v^w\subseteq I_v\subseteq G_v\subseteq G_\KK$ from the previous sections.

\subsection{Fullness and twists}\label{sec:fullnessandtwists}
Let $\rho:G_\KK\to\GL_d(\ZZ_p)$ be a continuous representation,
Following \cite[Definition 3.1]{ContiLangMedved2023}, we shall say that $\rho$ is \textit{inertia-full} if $\rho(I_v)$ (or equivalently $\rho(I_v^w)$) contains an open subgroup of $\SL_d(\ZZ_p)$. \\
We generalize \cite[Corollary 3.12]{ContiLangMedved2023} to arbitrary dimension, obtaining the following:

\begin{proposition}\label{prop:twistfull}
Let $\rho:G_\KK\to\GL_d(\ZZ_p)$ be an inertia-full  representation, and $\chi:G_\KK\to \ZZ_p^\times$ be a continuous character. Then the twist $\chi\otimes\rho$ is inertia-full.
\end{proposition}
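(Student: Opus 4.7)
The plan is to exploit the fact that any character is trivial on commutators, so that twisting by $\chi$ does not affect the image of the derived subgroup of $I_v$. Concretely, for $g_1,g_2\in I_v$ one has $\chi([g_1,g_2])=1$ since $\ZZ_p^\times$ is abelian, hence $(\chi\otimes\rho)([g_1,g_2])=\rho([g_1,g_2])$. Taking images yields
\[
(\chi\otimes\rho)(I_v)\;\supseteq\;(\chi\otimes\rho)\bigl(\overline{[I_v,I_v]}\bigr)\;=\;\rho\bigl(\overline{[I_v,I_v]}\bigr)\;=\;\overline{[\rho(I_v),\rho(I_v)]},
\]
where the last equality uses that $\rho$ is a closed map (continuous image of a compact space in a Hausdorff one). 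So it will suffice to show that the closed commutator subgroup $\overline{[\rho(I_v),\rho(I_v)]}$ contains an open subgroup of $\SL_d(\ZZ_p)$.

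Now I would use inertia-fullness of $\rho$ to fix an open subgroup $U\subseteq\SL_d(\ZZ_p)$ with $U\subseteq\rho(I_v)$; this gives $\overline{[\rho(I_v),\rho(I_v)]}\supseteq\overline{[U,U]}$. The problem has therefore been reduced to the purely group-theoretic claim that, for every open subgroup $U$ of $\SL_d(\ZZ_p)$ (with $d\geq 2$; the case $d=1$ is trivial), $\overline{[U,U]}$ is open in $\SL_d(\ZZ_p)$.

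For $d\geq 2$, I would invoke the Lie-theoretic dictionary for $p$-adic analytic groups recalled in Appendix \ref{sect:lie}. The Lie algebra of $U$ is $\mathfrak{sl}_d(\QQ_p)$, and that of $\overline{[U,U]}$ is $[\mathfrak{sl}_d(\QQ_p),\mathfrak{sl}_d(\QQ_p)]$; since $\mathfrak{sl}_d$ is simple, hence perfect, this bracket equals $\mathfrak{sl}_d(\QQ_p)$, so $\overline{[U,U]}$ has full Lie algebra and is open in $\SL_d(\ZZ_p)$. The one delicate point in the plan is the identification of the Lie algebra of the closed commutator subgroup with the bracket of the Lie algebras; I would make this rigorous by first replacing $U$ with a sufficiently deep uniform pro-$p$ open subgroup $U_0\subseteq U$ (e.g.\ $U\cap\Gamma_d(p^n)$ for $n$ large), where the $\exp/\log$ correspondence between closed subgroups and closed $\ZZ_p$-Lie subalgebras reduces the computation to the transparent equality $[p^n\mathfrak{sl}_d(\ZZ_p),p^n\mathfrak{sl}_d(\ZZ_p)]=p^{2n}\mathfrak{sl}_d(\ZZ_p)$, which is still open in $\mathfrak{sl}_d(\ZZ_p)$.
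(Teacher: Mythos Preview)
Your proof is correct and follows essentially the same strategy as the paper: both arguments exploit that a character is trivial on commutators, reducing to the fact that the (closed) commutator subgroup of an open subgroup of $\SL_d(\ZZ_p)$ is again open. The only cosmetic difference is that the paper passes through $\rho(\ker\chi\cap I_v^w)$ (which contains the derived subgroup since the quotient by it embeds in $\ZZ_p^\times$) and then cites the explicit identity $(\Gamma_d(p^n),\Gamma_d(p^n))=\Gamma_d(p^{2n})$ from \cite[Corollary 3.10]{ContiLangMedved2023}, whereas you argue directly with $\overline{[I_v,I_v]}$ and recover the same identity via the Lie-algebra computation $[p^n\mathfrak{sl}_d(\ZZ_p),p^n\mathfrak{sl}_d(\ZZ_p)]=p^{2n}\mathfrak{sl}_d(\ZZ_p)$ under the $\exp/\log$ correspondence for uniform groups.
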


\begin{proof}
Assume that $\Gamma_d(p^n)\subseteq \rho(I_v^w)$, and write $G=\rho(I_v^w)$, $G_0=\rho(\ker\chi\cap I_v^w)$. Since $G/G_0$ is abelian, $G_0$ must contain the derived group $(G,G)$. In particular it contains $(\Gamma_d(p^n),\Gamma_d(p^n))$. But the latter  is equal to $\Gamma_d(p^{2n})$ by \cite[Corollary 3.10]{ContiLangMedved2023}. Therefore $\Gamma_d(p^{2n})\subseteq \rho(\ker\chi\cap I_v^w)\subseteq (\chi\otimes\rho)(I_v^w)$.
\end{proof}

\begin{proposition}\label{prop:twistfullcons}
Let $\rho:G_\KK\to\GL_d(\ZZ_p)$ be an inertia-full representation, and let $\chi:G_\KK\to\ZZ_p^\times$ be a continuous character such that $(\chi^d\det\rho)(I_v^w)\not=\{1\}$.
Then $\chi\otimes\rho$ satisfies \eqref{assumption:main}. Therefore $\chi\otimes\rho$ has property \B.
\end{proposition}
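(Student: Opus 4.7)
The plan is to verify that $\chi\otimes\rho$ satisfies Assumption~\eqref{assumption:main} by checking the two-part criterion provided by Remark~\ref{rem:treuno}(\textit{b}): namely that $(\chi\otimes\rho)(I_v)\cap\SL_d(\ZZ_p)$ is open in $\SL_d(\ZZ_p)$ and that $\det(\chi\otimes\rho)(I_v)$ is open in $\ZZ_p^\times$. Once both are in place, Theorem~\ref{teo:B-simplified} applies to $\chi\otimes\rho$ and yields property \B.

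For the first condition, I would simply invoke the freshly-proved Proposition~\ref{prop:twistfull}: since $\rho$ is inertia-full, so is $\chi\otimes\rho$, meaning $(\chi\otimes\rho)(I_v^w)$ contains an open subgroup of $\SL_d(\ZZ_p)$. Since $I_v^w\subseteq I_v$, this a fortiori gives that $(\chi\otimes\rho)(I_v)\cap\SL_d(\ZZ_p)$ is open in $\SL_d(\ZZ_p)$.

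For the second condition, note that $\det(\chi\otimes\rho)=\chi^d\det\rho$, so the hypothesis $(\chi^d\det\rho)(I_v^w)\neq\{1\}$ is exactly the statement that $\det(\chi\otimes\rho)(I_v^w)$ is nontrivial. By Remark~\ref{rem:treuno}(\textit{b}), a continuous character from $I_v$ to $\ZZ_p^\times$ has open image if and only if it is nontrivial on $I_v^w$ (equivalently, has infinite order), so $\det(\chi\otimes\rho)(I_v)$ is open in $\ZZ_p^\times$. Combining the two verifications, Remark~\ref{rem:treuno}(\textit{b}) gives Assumption~\eqref{assumption:main} for $\chi\otimes\rho$, and then Theorem~\ref{teo:B-simplified} concludes.

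There is essentially no obstacle here: the proposition is a clean corollary assembled from Proposition~\ref{prop:twistfull} (inertia-fullness is preserved under character twists), the computation $\det(\chi\otimes\rho)=\chi^d\det\rho$, and the reformulation of \eqref{assumption:main} given in Remark~\ref{rem:treuno}(\textit{b}). The only point to watch is that the determinant condition in the hypothesis is phrased on $I_v^w$ rather than on $I_v$, but this is exactly the form that matches the equivalence in Remark~\ref{rem:treuno}(\textit{b}), so no additional argument is needed.
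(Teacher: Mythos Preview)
Your proposal is correct and follows essentially the same approach as the paper: invoke Proposition~\ref{prop:twistfull} to get inertia-fullness of the twist, compute $\det(\chi\otimes\rho)=\chi^d\det\rho$, apply Remark~\ref{rem:treuno}(\textit{b}) to obtain Assumption~\eqref{assumption:main}, and conclude via Theorem~\ref{teo:B-simplified}. The paper's proof is simply a terser version of what you wrote.
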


\begin{proof} By Proposition \ref{prop:twistfull}, $\chi\otimes\rho$ is an inertia-full representation. Moreover its determinant is $\chi^d\det\rho$, so that $\chi\otimes\rho$ satisfies \eqref{assumption:main} by Remark \ref{rem:treuno} $b)$. The last claim follows from Theorem \ref{teo:B-simplified}.
\end{proof}

\begin{remark} By Proposition \ref{prop:twistfullcons}, Condition \eqref{assumption:main} is invariant by twist by any continuous
character $\chi$ such that $(\chi^d\cdot\det(\rho))(I_v^w)\not=\{1\}$. 
\end{remark}

\noindent On the other hand, property \B holds for an inertia-full representation whose determinant has finite order.

 \begin{proposition}\label{prop:detfiniteorder}
 Let $\rho:G_\KK\to\GL_d(\ZZ_p)$ be a inertia-full representation, and assume that $\det\rho$ has finite order. Then $\rho$ satisfies \PTR and \CE. Therefore, it has property \B.
 \end{proposition}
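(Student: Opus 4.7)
The plan is to verify the two hypotheses of Theorem \ref{teo:rhohasBcenter}, namely that $\rho$ is potentially totally ramified at $v$ and that $\KK(\rho)/\KK$ has the central element property; property \B will then follow immediately. Set $G = \rho(G_\KK)$, and pick an open subgroup $U$ of $\SL_d(\ZZ_p)$ contained in $\rho(I_v)$, which exists by inertia-fullness. After shrinking $U$ if necessary, I may assume $U = \Gamma_d(p^n)$ for some $n \ge 1$.

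For potential total ramification, the key observation is that the finite-order hypothesis on $\det\rho$ forces $G \cap \SL_d(\ZZ_p) = \ker(\det|_G)$ to have finite index in $G$, and hence to be open in $G$. The subgroup $U$ is open in $\SL_d(\ZZ_p)$ and is contained in $G \cap \SL_d(\ZZ_p)$, so it is open there and in turn open in $G$. A fortiori $\rho(I_v) \supseteq U$ is open in $G$, giving potential total ramification at $v$.

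For the central element property, I would invoke Proposition \ref{prop:CEP} and check that its condition $(i)$ holds, i.e.\ that $\mug_{p^\infty}(\KK(\rho))$ is finite. Suppose, towards a contradiction, that instead $\mug_{p^\infty}(\KK(\rho)) = \mug_{p^\infty}$. Then $\ker\rho \subseteq \ker\vareps_p$, so the cyclotomic character descends to a continuous character $\psi\colon G \to \ZZ_p^\times$ with $\psi \circ \rho = \vareps_p$. The heart of the argument is to show that $\psi|_U$ is trivial. Since $U = \Gamma_d(p^n)$ is pro-$p$, its image under $\psi$ lies in the torsion-free pro-$p$ group $1 + p\ZZ_p$; moreover, by \cite[Corollary 3.10]{ContiLangMedved2023} (already used in the proof of Proposition \ref{prop:twistfull}), $[U,U] = \Gamma_d(p^{2n})$, so the abelianization $U^{\mathrm{ab}}$ is finite, and therefore any continuous homomorphism $U \to 1 + p\ZZ_p$ vanishes. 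Consequently $\vareps_p$ is trivial on the open subgroup $\rho^{-1}(U) \cap I_v$ of $I_v$. But $\vareps_p|_{I_v}$ has infinite image in $\ZZ_p^\times$ by local class field theory, so it cannot be trivial on any open subgroup of the compact group $I_v$. This is the desired contradiction.

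The main obstacle is the central element property step: the compatibility between inertia-fullness and the finite order of $\det\rho$ must be exploited by combining three ingredients --- the pro-$p$ nature of congruence subgroups, the finiteness of the abelianization of $\Gamma_d(p^n)$, and the non-triviality of $\vareps_p$ on inertia. In contrast, potential total ramification is essentially formal once one uses that finite-index closed subgroups are open in compact $p$-adic Lie groups. Once both hypotheses are established, Theorem \ref{teo:rhohasBcenter} applies and $\rho$ has property \B.
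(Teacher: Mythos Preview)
Your proof is correct. The argument for potential total ramification is essentially identical to the paper's. For the central element property, however, you take a genuinely different route: the paper argues that a descended cyclotomic character $\theta\colon\im(\rho)\to\ZZ_p^\times$ would have a non-open kernel on $\im(\rho)\cap\SL_d(\ZZ_p)$, and then invokes Riehm's classification of normal subgroups of open subgroups of $\SL_d(\ZZ_p)$ (Proposition~\ref{prop:riehm}) to force this kernel to be scalar, yielding an abelianness contradiction with inertia-fullness. You instead observe directly that $\Gamma_d(p^n)$ has finite abelianization (via $[\Gamma_d(p^n),\Gamma_d(p^n)]=\Gamma_d(p^{2n})$) and that $1+p\ZZ_p$ is torsion-free, so $\psi$ must vanish on $U$; the contradiction then comes from the infiniteness of $\vareps_p|_{I_v}$ rather than from non-abelianness. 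Your approach is somewhat more elementary, since it bypasses the Riehm-type structure result, at the cost of invoking the extra (but standard) input that the cyclotomic character is infinite on inertia.
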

 
 \begin{proof}
Set $G=im(\rho)\cap\SL_d(\ZZ_p)$.  Since $\rho$ is inertia-full, $G$ is open in $\SL_d(\ZZ_p)$. By hypothesis, $[im (\rho): G]=\lvert\det\rho\rvert$ is finite. It follows that $\rho(I_v)$ is open in $im (\rho)$, so that $\rho$ satisfies \PTR.\\
In order to prove that $\rho$  has \CE, we show that $\mu_{p^\infty}(\QQ(\rho))$ is finite and appeal to Proposition \ref{prop:CEP}. If not, there would be a map $\theta: im(\rho)\to \ZZ_p^\times$ with open image; since $G$ has finite index in $im(\rho)$, it would map by $\theta$ onto an infinite subset of $\ZZ_p^\times$, so that the kernel $H$ of this restriction would be a non-open normal subgroup of $G$. By Proposition \ref{prop:riehm} $a)$, $H$ should consist of scalar matrices. But scalar matrices in $\SL_d(\ZZ_p)$ are a finite group of order prime to $p$, so that $H\cap\rho(I_v^w)$ would be trivial, and $\theta$ would induce an injective map $\rho(I_v^w)\cap\SL_d(\ZZ_p)\to \ZZ_p^\times$. This would imply that $\rho(I_v^w)\cap\SL_d(\ZZ_p)$ is abelian, contradicting the inertia-fullness of $\rho$.\\
Then, $\rho$ has property \B by Theorem \ref{teo:rhohasBcenter}. 
\end{proof}

\subsubsection{Constant-determinant representations}\label{sec:constdet} 
If $\rho$ is inertia-full and $\det\rho(G_\KK)$ is infinite, but $\det\rho(I_v)$ is finite, then $\rho$ is not potentially totally ramified. However, we can still twist it by a suitable character to obtain a representation with property \B, as follows.  

 Let $v_p(d)$ denote the $p$-adic valuation of $d$. Let $G$ be a closed subgroup of $\GL_d(\Z_p)$, and $\det_G:G\to\Z_p^\times$ its determinant. We can write $\det_G: G\to\Z_p^\times$ as the product of a character $\chi_0:G\to(\Z/p\Z)^\times$ (the Teichm\"uller lift of the mod $p$ reduction of $\det_G$), and a character $\chi_1:G\to 1+p\Z_p$. Assume that $\chi_1(G)\subseteq 1+p^{v_p(d)}\ZZ_p$ (this would be certainly true if either $p\nmid d$, or $G\subset\Gamma_d(p^{v_p(d)})$). In this case, we define a character $\chi_G:G\to\Z_p^\times$ with the property that $\chi_G^d\det_G$ is a finite order character:  indeed, the Taylor expansion of the $d$-th root converges on $1+p^{v_p(d)}\Z_p$ and gives a character $\chi_G$ such that $\chi_G^d=\chi_1^{-1}$, so that $\chi_G^d\det_G=\chi_0$.
We define the \textit{constant-determinant twist} of $G$ as the subgroup
\[ G_0=\{g\cdot\chi(g)\Id_d\,\vert\,g\in G\}\subset\GL_d(\Z_p). \]
Then $\det_{G_0}=\chi_0$ has finite order. The above is an analogue of \cite[Definition 2.2]{ContiLangMedved2023}, which only deals with the 2-dimensional case (the terminology ``constant-determinant'' was introduced by Bella\"iche, and refers to the fact that $\chi_0$ can be considered as a ``constant'' deformation of its mod $p$ reduction).
There might be multiple choices for the constant-determinant twist of $G$, but this ambiguity is irrelevant in the following.\\ 
If $\rho:\Pi\to\GL_d(\Z_p)$ is a representation of a profinite group $\Pi$, and either $p\nmid d$ or $\det\rho(\Pi)\subseteq (\ZZ/p\ZZ)^\times \times (1+p^{v_p(d)}\ZZ_p)$, then we define the constant-determinant twist of $\rho$ as $\rho_0=\rho\otimes\chi_{\im\rho}$, where $\chi_{\im\rho}$ is the character $G\coloneqq\im\rho\to\Z_p^\times$ constructed above, seen as a character of $\Pi$. Clearly, there exists an open subgroup $\Pi_0$ of $\Pi$ such that $\rho_0(\Pi_0)\subset\SL_d(\Z_p)$.


\begin{proposition} Assume that either $p\nmid d$ or $\det\rho(\Pi)\subseteq (\ZZ/p\ZZ)^\times \times (1+p^{v_p(d)}\ZZ_p)$, and that $\rho:G_\KK\to \GL_d(\ZZ_p)$ is inertia-full. Then its 
constant-determinant twist $\rho_0$ is satisfies \PTR and \CE. In particular, it has property \B.
\end{proposition}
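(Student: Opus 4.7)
The proof should be a direct reduction to Proposition \ref{prop:detfiniteorder} via Proposition \ref{prop:twistfull}, since the constant-determinant twist $\rho_0$ is precisely built to have finite-order determinant while preserving the inertia-fullness of $\rho$. My plan is to verify these two properties for $\rho_0$ and then invoke Proposition \ref{prop:detfiniteorder} directly.

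First, I would check that $\det\rho_0$ has finite order. By construction, $\rho_0=\rho\otimes\chi_{\im\rho}$, so $\det\rho_0=\chi_{\im\rho}^d\cdot\det\rho$. The assumption on $\det\rho(\Pi)$ (either $p\nmid d$, or the image of $\det\rho$ lands in $(\ZZ/p\ZZ)^\times\times (1+p^{v_p(d)}\ZZ_p)$) is exactly what makes the $d$-th root character $\chi_{\im\rho}$ well-defined via the Taylor expansion of $x\mapsto x^{1/d}$ on $1+p^{v_p(d)}\ZZ_p$. By the very construction recalled right before the statement, $\chi_{\im\rho}^d\cdot\det\rho=\chi_0$, where $\chi_0$ is the Teichm\"uller lift of the mod $p$ reduction of $\det\rho$. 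Since $\chi_0$ takes values in the finite group $(\ZZ/p\ZZ)^\times$, it has finite order, hence $\det\rho_0$ has finite order.

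Next, I would verify that $\rho_0$ is still inertia-full. Since $\chi_{\im\rho}$ is a continuous $\ZZ_p^\times$-valued character of $G_\KK$, Proposition \ref{prop:twistfull} applies directly to the pair $(\rho,\chi_{\im\rho})$ and yields that the twist $\rho_0=\chi_{\im\rho}\otimes\rho$ is inertia-full.

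With both facts in hand, the conclusion follows immediately from Proposition \ref{prop:detfiniteorder}: a continuous representation $G_\KK\to\GL_d(\ZZ_p)$ that is inertia-full and has determinant of finite order is potentially totally ramified, satisfies the central element property, and therefore has property \B by Theorem \ref{teo:rhohasBcenter}. I do not foresee any real obstacle here; the content of the proposition lies in the preceding constructions and the auxiliary propositions already proved, and the main step is simply matching the construction of $\chi_{\im\rho}$ with the hypotheses needed to apply Propositions \ref{prop:twistfull} and \ref{prop:detfiniteorder}.
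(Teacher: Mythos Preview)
Your proposal is correct and matches the paper's proof essentially verbatim: the paper simply notes that $\rho_0$ is inertia-full by Proposition \ref{prop:twistfull}, that $\det\rho_0$ has finite order by construction, and then applies Proposition \ref{prop:detfiniteorder}. Your write-up adds more detail on the finite-order verification, but the logical structure is identical.
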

\begin{proof}
By Proposition \ref{prop:twistfull}, $\rho_0$ is inertia-full, and by construction $\det\rho_0$ has finite order. Therefore, Proposition \ref{prop:detfiniteorder} is applicable. 
\end{proof}

\subsection{Some criteria in dimension 2}\label{sec:propsrhop}

Until further notice, we restrict ourselves to 2-dimensional representations. Let $\rho_v\colon G_v\to\GL_2(\Z_p)$ be a continuous representation with modulo $p$ reduction $\ovl\rho_v$. In the next propositions, we give various different assumptions on $\rho_v$ that guarantee that the image of $I_v$ is sufficiently large.

    \begin{prop}\label{prop:rhop1}
    Assume that:
    \begin{enumerate}[label=(\roman*)]
    \item $\rho_v$ is absolutely irreducible;
    \item if $\ovl\rho_v$ is reducible, then the projective image of $\ovl\rho_v^\sms$ has order different from 2;
    \item if $\ovl\rho_v$ is irreducible, then $\rho_v$ is not induced from a character, and not a twist of a representation with finite image;
    \item $\det\rho_v(I_v)$ is infinite.
    \end{enumerate}
    Then $\rho_v(I_v)$ is open in $\GL_2(\Z_p)$.
    \end{prop}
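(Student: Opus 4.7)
My plan is to exploit Remark \ref{rem:treuno}(b) to reduce openness of $\rho_v(I_v)$ in $\GL_2(\ZZ_p)$ to showing $\rho_v(I_v) \cap \SL_2(\ZZ_p)$ is open in $\SL_2(\ZZ_p)$: assumption (iv) already gives openness of $\det\rho_v(I_v)$ in $\ZZ_p^\times$. Equivalently, setting $\fh := \mathrm{Lie}(\rho_v(I_v)) \subseteq \fgl_2(\QQ_p)$, it suffices to show $\fh \supseteq \fsl_2(\QQ_p)$. By Proposition \ref{prop:liealg} applied to $G = \rho_v(I_v)$, this amounts to verifying that $\QQ_p^2$ is strongly absolutely irreducible as a $\rho_v(I_v)$-module. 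I would rule out each failure mode, leveraging throughout that $I_v \triangleleft G_v$ --- so $\rho_v(G_v)$ normalizes $\rho_v(I_v)$ and permutes any geometric structure (stable lines, induction data, center) attached to $\rho_v|_{I_v}$.

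The first failure mode is reducibility of $\rho_v|_{I_v}$ over $\ovl{\QQ}_p$: let $\Lambda \subseteq \mathbb{P}^1(\ovl{\QQ}_p)$ be the (finite if $\fh$ is nonscalar) set of $\rho_v(I_v)$-stable lines, permuted by $\rho_v(G_v)$. If $|\Lambda|=1$, that line is $G_v$-stable, so $\rho_v$ is reducible, contradicting (i). If $|\Lambda|=2$, either $\rho_v(G_v)$ fixes both (again contradicting (i)), or swaps them --- in which case $\rho_v$ is induced from the index-$2$ stabilizer $H \leq G_v$. If $\Lambda$ is infinite, $\fh$ is contained in the scalar subalgebra, handled below. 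The second failure mode is that $\rho_v|_{I_v}$ is induced from an index-$2$ subgroup of $I_v$; arguing as above via $G_v$-equivariance of the induction datum, this again forces $\rho_v$ to be induced from an index-$2$ subgroup of $G_v$.

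In the induced case, if $\ovl{\rho}_v$ is irreducible then (iii) is violated. If $\ovl{\rho}_v$ is reducible, writing $\rho_v = \mathrm{Ind}_H^{G_v} \chi$ and letting $\sigma$ generate $G_v/H$ forces $\ovl{\chi}^\sigma = \ovl{\chi}$; then $\ovl{\chi}$ extends to a character $\widetilde{\ovl{\chi}}$ of $G_v$ and a standard computation gives $\ovl{\rho}_v^\sms \simeq \widetilde{\ovl{\chi}} \oplus \widetilde{\ovl{\chi}} \cdot \eta$, where $\eta \colon G_v/H \xrightarrow{\sim} \{\pm 1\}$ is the nontrivial character. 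Since $p$ is odd, $\eta \bmod p$ is nontrivial in $\ovl{\FF}_p^\times$, so the projective image of $\ovl{\rho}_v^\sms$ has order exactly $2$, contradicting (ii).

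The third failure mode is that $\fh$ is scalar, so some open $J \triangleleft I_v$ acts by a character $\chi_0 \cdot \mathrm{Id}$; after extending (a modification of) $\chi_0$ to a character $\chi$ of $G_v$ --- using $G_v/I_v \simeq \widehat{\ZZ}$ and the normality of $J$ --- the twist $\rho_v \otimes \chi^{-1}$ has finite image. By (i) this finite-image representation is absolutely irreducible, and since $G_v$ is topologically solvable, its image must lie in the normalizer of a torus, so it is induced from an index-$2$ subgroup, returning us to the induced case. The main obstacle will be the reducible-$\ovl{\rho}_v$ subcase of the induced case, where one must carefully verify the semisimplification decomposition and that $\eta \bmod p$ is nontrivial --- the latter being precisely where the hypothesis $p$ odd enters.
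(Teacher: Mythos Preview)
Your approach is genuinely different from the paper's. The paper works with $\rho_v(G_v)$ first: hypotheses (i)--(iii) feed directly into Corollary~\ref{cor:bigimage} and Lemma~\ref{lemma:noind}(iii) to show that $\rho_v(G_v)$ contains an open subgroup of $\SL_2(\ZZ_p)$, and then a single invocation of Proposition~\ref{prop:inaspettata}(ii) (together with (iv)) transfers openness to $\rho_v(I_v)$. You instead apply the classification of Proposition~\ref{prop:liealg} to $\rho_v(I_v)$ and push each failure mode up to $G_v$ by hand. Your first mode and your ``induced case'' analysis are correct (the latter is essentially a reproof of Lemma~\ref{lemma:noind}(ii)), and the second mode can be made rigorous with a little care about when the pair $\{L_1,L_2\}$ is canonical.

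The third mode, however, has two real gaps. First, the claim that one can extend $\chi_0$ to a character $\chi$ of $G_v$ so that $\rho_v\otimes\chi^{-1}$ has \emph{finite} image is not justified: this amounts to asserting that the projective image of $\rho_v(G_v)$ is finite, but a priori it is only an extension of a quotient of $\hat\ZZ$ by the finite group $\bar\rho_v(I_v)$, and the $\hat\ZZ$-part can contribute an element of infinite order. When it does, one finds (by looking at the normalizer of that element in $\mathrm{PGL}_2$) that $\rho_v$ is reducible or induced rather than a twist of a finite-image representation---so the correct conclusion is ``induced or twist of finite'', which is exactly what Corollary~\ref{cor:bigimage} gives when applied to $G_v$, i.e.\ the paper's route. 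Second, ``$G_v$ is topologically solvable'' does not force a finite absolutely irreducible subgroup of $\GL_2(\ovl\QQ_p)$ to sit in the normalizer of a torus: $\SL_2(\FF_3)$ is solvable with projective image $A_4$. What actually excludes $A_4$ and $S_4$ as projective images is the specific ramification filtration of $G_v$ (wild inertia pro-$p$ with $p$ odd, tame quotient procyclic), not solvability alone. Both gaps are repairable, but the repairs amount to running the paper's argument on $G_v$ anyway; Proposition~\ref{prop:inaspettata} is precisely the device that lets the paper avoid this case-by-case transfer.
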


\begin{proof}
If $\ovl\rho_v$ is reducible and its projective image has order different from 2, then $\rho_v(G_p)$ contains an open subgroup of $\SL_2(\Z_p)$ by Lemma \ref{lemma:noind}. The same holds true if $\ovl\rho_v$ is irreducible,  by assumption \textit{(iii)} and Corollary \ref{cor:bigimage}. In both cases, Proposition \ref{prop:inaspettata} \textit{(ii)}   implies that $\rho_v(I_v)$ is open in $\GL_2(\Z_p)$, by assumption $(iv)$.
\end{proof}

\subsubsection{A condition on Hodge--Tate--Sen weights} Recall that Sen theory associates with $\rho_v$ and with an embedding $\sigma:K_v\into\Qp$ a \textit{Sen operator} $\Phi_v\in\GL_2(\wh{\ovl\Q}_p)$ \cite{SenHodgeTate}. The \textit{$\sigma$-Hodge--Tate--Sen weights} of $\rho_p$ are defined to be the eigenvalues $k_{\sigma,1}, k_{\sigma,2}$ of $\Phi_v$, and coincide with the $\sigma$-Hodge--Tate weights of $\rho_v$ if $\rho_v$ is Hodge--Tate.

\begin{prop}\label{prop:rhop3}
Assume that:
\begin{enumerate}[label=(\roman*)]
\item $\rho_v$ is absolutely irreducible and not induced (i.e. strongly absolutely irreducible);
\item there exists $\sigma:K_v\into\Qp$ such that $k_{\sigma,1}\ne k_{\sigma,2}$, and $\tau:K_v\into\Qp$ such that $k_{\tau,1}\ne -k_{\tau,2}$.
\end{enumerate}
Then $\rho_v(I_v)$ contains an open subgroup of $\GL_2(\Z_p)$.
\end{prop}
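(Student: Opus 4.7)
The plan is to reduce to the earlier criterion (Proposition \ref{prop:rhop1}, or more directly Proposition \ref{prop:inaspettata}) by extracting from the two Sen weight inequalities the two missing ingredients: that $\rho_v$ is not a twist of a representation with finite image, and that $\det\rho_v(I_v)$ is infinite.

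First I would exploit the inequality $k_{\sigma,1}\neq k_{\sigma,2}$. If $\rho_v \cong \chi \otimes \rho_v'$ with $\rho_v'(G_v)$ finite, then over $\Qp$ the representation $\rho_v'$ becomes a direct sum of finite-order characters, each with vanishing Sen operator; hence both $\sigma$-Sen weights of $\rho_v$ would equal the $\sigma$-Sen weight of $\chi$, contradicting the hypothesis. So the twist-of-finite-image case is excluded. Combined with absolute irreducibility and the non-induced hypothesis, this is enough, via Corollary \ref{cor:bigimage} (or the underlying Lie-algebra classification of Proposition \ref{prop:liealg}), to conclude that $\rho_v(G_v)$ contains an open subgroup of $\SL_2(\Z_p)$.

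Next I would exploit $k_{\tau,1}\neq -k_{\tau,2}$. The $\tau$-Sen weight of $\det\rho_v$ is $k_{\tau,1}+k_{\tau,2}$. If $\det\rho_v(I_v)$ were finite, then $\det\rho_v$ would be trivial on some open subgroup of $I_v$, forcing its Sen operator---and hence all its Sen weights---to vanish, contradicting the nonzero value $k_{\tau,1}+k_{\tau,2}$. Thus $\det\rho_v(I_v)$ is infinite (equivalently, nontrivial on the wild inertia $I_v^w$, since the tame quotient maps to a finite subgroup of $\Z_p^\times$). Together with the previous step, Remark \ref{rem:treuno}\textit{b)} then yields that $\rho_v(I_v)$ is open in $\GL_2(\Z_p)$.

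The subtlest point, I expect, is the reduction in the first step when $\ovl\rho_v$ is reducible with small projective image, since the ad hoc condition \textit{(ii)} of Proposition \ref{prop:rhop1} is not automatically supplied by the Sen weight hypotheses. Here one has to argue directly through the Lie-algebra classification: the Sen operator of $\rho_v$ produces a non-scalar element in the $p$-adic Lie algebra of $\rho_v(G_v)$, and together with absolute irreducibility and non-inducedness this forces the Lie algebra to be all of $\fgl_2(\Qp)$, giving the desired openness without invoking the residual hypothesis of Proposition \ref{prop:rhop1}.
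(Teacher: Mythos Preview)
Your argument is essentially the paper's own: rule out the twist-of-finite-image case via $k_{\sigma,1}\ne k_{\sigma,2}$, apply Corollary \ref{cor:bigimage} to get that $\rho_v(G_v)$ contains an open subgroup of $\SL_2(\Z_p)$, then use $k_{\tau,1}+k_{\tau,2}\ne 0$ to force $\det\rho_v(I_v)$ infinite and finish. Two small corrections: at the end you should invoke Proposition \ref{prop:inaspettata}\textit{(ii)} rather than Remark \ref{rem:treuno}\textit{b)}, since the latter needs $\rho_v(I_v)\cap\SL_2(\Z_p)$ open while you have only established this for $\rho_v(G_v)$; and your final paragraph is unnecessary, as Corollary \ref{cor:bigimage} carries no residual hypothesis, so there is no separate ``subtle'' case when $\ovl\rho_v$ is reducible.
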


\begin{proof}
Since $k_{\sigma,1}\ne k_{\sigma,2}$ and representations of finite image have all Hodge--Tate--Sen weights equal to 0, $\rho_v$ is not the twist of a representation of finite image. 
If $\ovl\rho_v$ is irreducible, then $\rho_v$ is not induced by assumption. Therefore, $\rho_v(G_v)$ contains an open subgroup of $\GL_2(\Z_p)$ by Corollary \ref{cor:bigimage}. 
Moreover, $\det\rho$ is a character of $\tau$-Hodge--Tate--Sen weight $k_{\tau,1}+k_{\tau,2}\ne 0$, hence its restriction to $I_v$ has infinite order. 
The conclusion then follows from Proposition \ref{prop:inaspettata} $(ii)$.
\end{proof}


The discussion in this section up to here is summarized by the following immediate consequence of Theorem \ref{thm:rhohasB}.

\begin{thm}\label{thm:propsimplyB}
Let $\rho\colon G_\KK\to\GL_2(\Z_p)$ be a continuous representation such that $\rho_v\coloneqq\rho\vert_{G_v}$ satisfies the assumptions of either of Propositions \ref{prop:rhop1}, \ref{prop:rhop3}. Then $\rho$ has property \B.
\end{thm}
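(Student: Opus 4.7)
The plan is to observe that this theorem is essentially a direct combination of the earlier results, so the proof reduces to verifying that the hypotheses match up. First I would note that, by definition, $\rho_v = \rho|_{G_v}$, so $\rho_v(I_v) = \rho(I_v)$. Thus any conclusion about the openness of $\rho_v(I_v)$ in $\GL_2(\Z_p)$ translates immediately into the same conclusion about $\rho(I_v)$.

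Next I would invoke either Proposition \ref{prop:rhop1} or Proposition \ref{prop:rhop3}, depending on which set of hypotheses is in force. Both conclusions state that $\rho_v(I_v)$ is open in $\GL_2(\Z_p)$ (in the second case, literally contains an open subgroup of $\GL_2(\Z_p)$, which is the same thing). Combined with the previous identification, this is precisely Assumption \eqref{assumption:main} for the global representation $\rho$, in dimension $d=2$.

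Finally, I would apply Theorem \ref{teo:B-simplified} (= Theorem \ref{thm:rhohasB}), which asserts that any continuous representation $\rho\colon G_\KK \to \GL_d(\Z_p)$ satisfying \eqref{assumption:main} has property \B. This yields the claim.

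The honest assessment is that there is no serious obstacle: the work has been done in Propositions \ref{prop:rhop1} and \ref{prop:rhop3}, which do the detailed local analysis, combining the fullness criteria (Lemma \ref{lemma:noind}, Corollary \ref{cor:bigimage}, and the dictionary between $\SL_d$-openness, $\GL_d$-openness, and nontriviality of $\det\rho(I_v^w)$ encoded in Proposition \ref{prop:inaspettata} and Remark \ref{rem:treuno}) with either the projective image condition or the Hodge--Tate--Sen weight condition. The theorem itself then simply packages these local openness criteria into a clean global \B statement via Theorem \ref{teo:B-simplified}.
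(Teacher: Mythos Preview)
Your proposal is correct and matches the paper's own proof, which simply states that the theorem is an immediate consequence of Theorem \ref{thm:rhohasB} once Propositions \ref{prop:rhop1} and \ref{prop:rhop3} have established that $\rho(I_v)$ is open in $\GL_2(\Z_p)$. Your observation that $\rho_v(I_v)=\rho(I_v)$ and your identification of this with Assumption \eqref{assumption:main} is exactly the (trivial) link the paper leaves implicit.
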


\subsubsection{A criterion of Serre} Assume that $\KK=\QQ$, so that $v=p$ and $\rho_p$ is now a representation $G_p\to\GL_2(\ZZ_p)$. We give a criterion of Serre \cite{Serre1972} for reading some of our recurring assumptions on $\ovl\rho_p$ from its restriction to $I_p$. 

Let $\ovl\rho_p\colon G_p\to\GL_2(\F_p)$ be a continuous representation, and let $\ovl\rho_p^\sms$ be its semisimplification. We describe the restriction $\ovl\rho_p^\sms\vert_{I_p}$, following \cite[Section 2]{Serre1987}. We point out a possible cause of confusion: we keep our notation for which $I_p$ is the inertia subgroup of a fixed decomposition group $G_p\subset G_\Q$, while in \textit{loc. cit.} $I$ is the full inertia subgroup and $I_p\subset I$ the wild inertia subgroup. 
We stick instead to our notation $I_p^w$ for the wild inertia subgroup of $I_p$, and $I_p^t$ for the quotient $I_p/I_p^w$.

By \cite[Proposition 4]{Serre1972}, $I_p^w$ acts trivially under $\ovl\rho_p^\sms$, so that $\ovl\rho_p^\sms$ can be factored through the tame inertia quotient $I_p^t=I_p/I_p^w$. Since $I_p^t$ is pro-cyclic, its action can always be diagonalized, with two distinct cases appearing as in the following.

\begin{prop}\label{prop:serre}\cite[Section 2]{Serre1972}
Exactly one of the following holds: 
\begin{itemize}
\item[$\mathrm{(irr)}$] $\ovl\rho_p$ is irreducible, in which case it is absolutely irreducible and 
\[ \ovl\rho_p\vert_{I_p}\cong\psi^{a+pb}\oplus\psi^{b+pa} \]
for a fundamental character of inertia $\psi\colon I_p\to\F_{p^2}^\times$ and integers $0\le a<b\le p-1$.
\item[$\mathrm{(red)}$] $\ovl\rho_p$ is reducible, in which case its semisimplification $\ovl\rho_p^\sms$ satisfies
\[ \ovl\rho_p^\sms\vert_{I_p}\cong\ovl\vareps_p^a\oplus\ovl\vareps_p^b \]
for the mod $p$ cyclotomic character $\ovl\vareps_p\colon I_p\to\F_p^\times$ and integers $0\le a\le b\le p-2$.
\end{itemize}
\end{prop}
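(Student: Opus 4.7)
The plan is to follow Serre's classical argument, built on the structure of the local Galois group $G_p$ and the action of Frobenius on tame characters. First I would observe that, since the wild inertia $I_p^w$ is a pro-$p$-group, its image in $\GL_2(\F_p)$ is a finite $p$-group, hence conjugate into the unipotent upper triangular matrices; equivalently, any semisimple $\F_p$-representation of a $p$-group in characteristic $p$ is trivial. It follows that $\ovl\rho_p^\sms\vert_{I_p}$ is trivial on $I_p^w$ and factors through the tame quotient $I_p^t=I_p/I_p^w$. Since $I_p^t$ is procyclic of order prime to $p$ (being the inverse limit of the cyclic groups $\F_{p^n}^\times$), any continuous $\ovl\F_p$-linear representation of it is diagonalizable, so $\ovl\rho_p^\sms\vert_{I_p^t}\otimes_{\F_p}\ovl\F_p\cong\chi_1\oplus\chi_2$ for two characters $\chi_i\colon I_p^t\to\ovl\F_p^\times$.

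Next, the key constraint: a lift $F\in G_p$ of arithmetic Frobenius acts on $I_p^t$ by the $p$-th power map $x\mapsto x^p$, so for every $x\in I_p^t$ the multisets of eigenvalues of $\ovl\rho_p(x)$ and $\ovl\rho_p(x^p)=\ovl\rho_p(x)^p$ must coincide. This forces $\{\chi_1,\chi_2\}=\{\chi_1^p,\chi_2^p\}$ as unordered pairs, from which two mutually exclusive cases emerge: either $\chi_i^p=\chi_i$ for $i=1,2$ (both characters are $\F_p^\times$-valued) or $\chi_1^p=\chi_2\ne\chi_1$ (both characters lie in $\F_{p^2}^\times\setminus\F_p^\times$ and are $\F_{p^2}/\F_p$-conjugate).

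In the first case, every continuous character $I_p^t\to\F_p^\times$ is a power of $\ovl\vareps_p\vert_{I_p}$ (the fundamental character of level $1$), so $\ovl\rho_p^\sms\vert_{I_p}\cong\ovl\vareps_p^a\oplus\ovl\vareps_p^b$ with $0\le a\le b\le p-2$ after reordering. A short argument separating the subcases $a\ne b$ (in which case $F$ must preserve each inertia eigenline, since it conjugates $\chi_i$ to itself) and $a=b$ (in which case $\ovl\rho_p\vert_{I_p}$ is scalar, so $\ovl\rho_p$ is the twist by $\ovl\vareps_p^a$ of a representation of the abelian quotient $G_p/I_p$) then shows that $\ovl\rho_p$ is reducible, giving case (red). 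In the second case, $\chi_1$ has order dividing $p^2-1$ but not $p-1$, so it is a power $\psi^m$ of a fundamental character $\psi$ of level $2$ with $(p+1)\nmid m$; writing $m=a+pb$ with $0\le a,b\le p-1$, one computes $\chi_2=\chi_1^p=\psi^{b+pa}$ using $\psi^{p^2-1}=1$, and the condition $\chi_1\ne\chi_2$ forces $a\ne b$, so we arrange $a<b$. Here the eigendecomposition is not defined over $\F_p$, so $\ovl\rho_p\vert_{I_p}$, and hence $\ovl\rho_p$, is irreducible, with absolute irreducibility following from $\chi_1\ne\chi_2$: this is case (irr).

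The step I expect to need the most care is the identification of the Frobenius-conjugation action on $I_p^t$ with the $p$-th power map; this is exactly what produces the specific exponent pattern $a+pb,\,b+pa$ in the level-$2$ case. Once this input is in place, the remainder reduces to bookkeeping of exponents modulo $p^2-1$ together with standard semisimplification arguments and the classification of continuous characters of $I_p^t$ in terms of fundamental characters.
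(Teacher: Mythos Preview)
The paper does not give its own proof of this proposition; it simply records the statement with the citation \cite[Section 2]{Serre1972} and moves on. Your proposal is a correct reconstruction of Serre's classical argument: triviality of wild inertia on the semisimplification, diagonalization over the procyclic tame quotient, the Frobenius conjugation constraint $\{\chi_1,\chi_2\}=\{\chi_1^p,\chi_2^p\}$, and the resulting dichotomy into level-$1$ and level-$2$ fundamental characters, with the reducibility/irreducibility conclusions drawn from how Frobenius acts on the inertia eigenlines.
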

\noindent Both $\psi$ and $\ovl\vareps_p$ are trivial on $I_p^w$, which is compatible with what we already remarked.

\begin{rem}\label{rem:serre}
One can deduce from Proposition \ref{prop:serre} the well-known fact that, if $\ovl\rho_p$ is irreducible, then it is induced by a character of $G_{\Q_{p^2}}$.
\end{rem}

Proposition \ref{prop:serre} can be applied to our setting as follows.

\begin{rem}\label{rem:ab}
In the case $\KK=\QQ$, condition $(ii)$ of Proposition \ref{prop:rhop1} is implied by the stronger condition:
\begin{itemize}
    \item[$(ii')$] if $\ovl\rho_p$ is reducible and $a,b$ are as in Proposition \ref{prop:serre} (red), then $a\not\equiv b\pmod {\frac {p-1} 2}$.
\end{itemize} 
\end{rem}

\subsubsection{Representations with big mod $p$ global image}
In the previous applications, the global Galois representation is always potentially totally ramified, so that a decomposition group is open in the global Galois group and it is not necessary to take a normal closure as in assumption \NC. We give an example where this is not the case and the full generality of \Cref{teo:BforLie} comes into play. We show that if the mod $p$ reduction of a global representation into $\GL_2(\ZZ_p)$ is surjective, then \NC can be satisfied even if the local image is not very large.

\begin{prop}\label{prop:2dim}
Let $\rho:G_\KK\to\GL_2(\ZZ_p)$ be a continuous representation and $v$ a $p$-adic place of $\KK$. Assume that:
\begin{enumerate}[label=(\roman*)]
    \item\label{surj} $\ovl\rho:G_\KK\to\GL_2(\FF_p)$ is surjective;
    \item\label{sca} the $\Q_p$-Lie algebra of $\rho(G_v)$ has dimension at least 2;
\item\label{Iv} $\det\rho(I_v)$ is open in $\Z_p^\times$. 
\end{enumerate}
Then $\rho$ has \NC. If moreover
\begin{enumerate}[label=(\roman*),resume]
\item\label{Gv} $\rho\vert_{G_v}$ is absolutely irreducible,
\end{enumerate}
then $\rho$ has \B.
\end{prop}
\begin{proof}
We choose the filtrations $\{G_n\}$ and $\{\GG_n\}$ in \NC to be those induced by the standard one on $\GL_2(\ZZ_p)$, i.e. we let $\rho_n$ be the composition of $\rho$ with the projection $\GL_2(\ZZ_p)\onto\GL_2(\ZZ/p^n)$, and let $\GG_n=\ker\rho_n,G_n=\ker\rho_n\cap G_v$. 

Let $n\ge 1$. Via the logarithm map we map $G_n$ to a $\ZZ_p$-Lie subalgebra $\fg_n$ of $\fgl_2(\ZZ_p)$. By assumption \ref{sca}, $\fg_n\otimes_{\ZZ_p}\QQ_p$ has dimension at least 2, hence it has nonzero intersection with $\fsl_2(\QQ_p)$. Therefore the intersection $\fg_n^0\coloneqq\fg_n\cap\fsl_2(\ZZ_p)$ is also non-zero. Let $x\ne 0$ be an element of $\fg_1^0$, and let $n_0$ be the smallest positive integer such that $x\notin\fg_{n_0}^0$. Then, for every $n\ge n_0$, $p^{n-n_0}x\in \fg_{n-1}^0\setminus\fg_n^0$, so that $\fg_{n-1}^0/\fg_n^0\ne 0$.

Again the logarithm map, we identify $G_{n-1}/G_n$, $\GG_{n-1}/\GG_{n}$ and $\ovl{G_{n-1}/G_n}$ with additive subgroups (hence $\F_p$-vector subspaces) $V_n$, $\VV_n$ and $\ovl V_n$ of $\fgl_2(\F_p)$. Clearly, $V_n=\fg_{n-1}/\fg_n$, and $\ovl V_n$ is a $G_\KK$-stable subspace of the adjoint representation of $G_\KK$ on $\fgl_2(\F_p)$, hence of the adjoint representation of $\GL_2(\F_p)$ because of assumption \ref{surj}. 


We need to show that $\ovl V_n=\VV_n$ for $n$ large enough. 
Let $V_n^0=V_n\cap \fsl_2(\F_p)$, $\VV_n=\VV_n^0\cap \fsl_2(\F_p)$ and $\overline{V}_n^0= \overline{V}_n\cap \fsl_2(\F_p)$. By our earlier remarks, if $n$ is large enough, $V_n^0=\fg_{n-1}^0/\fg_n^0\ne 0$. Since $\fsl_2(\F_p)$ is a simple representation of $\GL_2(\F_p)$, condition \ref{surj} implies that $\ovl V_n^0=\fsl_2(\F_p)=\VV_n^0$. If $\VV_n^0=\VV_n$, there is nothing left to prove. If not, then $\det$ maps the subgroup $\GG_{n-1}/\GG_n\subset\GL_2(\Z/p^n)$ onto   the $p$-group $(1+p^{n-1}\ZZ)/(1+p^{n}\ZZ)\simeq \FF_p$, hence it is surjective. By assumption \ref{Iv},  if $n$ is sufficiently large, then $\det\rho_n(I_v)$ surjects onto $\det(\GG_{n-1}/\GG_n)$, which shows that $V_n$, and hence $\ovl V_n$ also contains the scalars.

For the second part of the statement, it is enough to check that $\rho_v\coloneqq\rho\vert_{G_v}$ satisfies \PTR and $\rho$ has \CE, and apply \Cref{teo:BforLie}. If $\rho_v$ is strongly absolutely irreducible, then it satisfies \PTR by \Cref{cor:bigimage} and \Cref{prop:inaspettata}. If it is absolutely irreducible, but not strongly, then either it is a twist of a representation with finite image, 
or it is induced by a character $\chi:H\to\cO^\times$ for an open subgroup $H\subset G_v$ of index 2 and the valuation ring $\cO$ of a $p$-adic field (at most quadratic). We can exclude the first case, since by assumption $(iii)$ the $\Q_p$-Lie algebra of $\rho(G_v)$ is at least 2-dimensional, hence the projective image cannot be finite. Therefore, we can assume that we are in the induced case. If $\chi(I_v)$ is finite, then $\det\Ind_{H}^{G_v}(\chi)(I_v)$ is also finite, a contradiction. Therefore, $\chi(I_v)$ is an infinite subgroup of $\cO^\times$. Therefore, the $\Q_p$-Lie algebra of $\rho(I_v)$ is 2-dimensional (it is the Lie algebra of the image of $\cO^\times\into\GL_2(\Z_p)$). Since $\rho_v$ is induced, the $\Q_p$-Lie algebra of $\rho(G_v)$ is also 2-dimensional, so $\rho(I_v)$ is open in $\rho(G_v)$, as desired. 

We show that, under $(iv)$, $\rho$ has \CE, hence \B by \Cref{teo:rhohasBcenter}. If $\rho\vert_{G_v}$ is strongly absolutely irreducible, then $\rho(I_v)$ contains an open subgroup of $\SL_2(\QQ_p)$ by \Cref{cor:bigimage}, and since $\det\rho(I_v)$ is infinite, $\rho(I_v)$ is open in $\GL_2(\ZZ_p)$ by \Cref{prop:inaspettata}. Therefore, $\rho$ has \CE by \Cref{prop:integer}. 

If instead $\rho\vert_{G_v}$ is induced by a character $\chi:G_{\KK'_{v'}}\to\Qp^\times$ for a quadratic extension $\KK'_{v'}$ of $\KK_v$, we show \CE directly. It is enough to show that $\rho(I_v)$ contains infinitely many scalar matrices. Via a local reciprocity map at $v'$, write $\chi\vert_{I_{v'}}\ccirc\mathrm{rec}_{v'}=\eta\prod_{\sigma\in\Sigma'}\langle\sigma\rangle^{a_{\sigma}}:\cO_{\KK'_{v'}}^\times\to\ovl\Z_p^\times$, where $\eta$ a finite order character, $a_\sigma\in\ZZ_p$, $\Sigma'$ is the set of embeddings $\KK'_{v'}\into\Qp$, and similarly to \Cref{prop:integer} we write $\langle\sigma\rangle$ for the homomorphism $\cO_{\KK'_{v'}}^\times\to\ZZ_p^\times$ that coincides with $\sigma$ on the pro-$p$ factor and is trivial on the prime-to-$p$ factor. If $c$ is a generator of $\Gal(\KK'_{v'}/\KK_v)$, then the restriction $\rho\vert_{I_{v'}}$ is the direct sum of $\chi$ and of $\chi^c=(\eta c)\prod_{\sigma\in\Sigma'}\langle\sigma c\rangle^{a_\sigma}$. For $h\in\cO_{\KK'_{v'}}^\times$, in order for $\rho(h)$ to be scalar it is sufficient that $\eta(h)=1$ and $\prod_{\sigma\in\Sigma'}\langle\sigma(\sigma c)^{-1}\rangle(h^{a_\sigma-a_{\sigma c}})=1$. The first condition holds on an open subgroup of $\cO_{\KK'_{v'}}$, the second if and only if $h^{\sum_\sigma(a_\sigma-a_{\sigma c})}\in\cO_{\KK_v}^\times$ (hence trivially for $h\in\cO_{\KK_v}^\times$). It is enough to show that these conditions produce infinitely many scalar matrices in $\rho(I_{v'})$. If $\prod_{\sigma\in\Sigma'}\langle\sigma\rangle^{a_\sigma}$ is nontrivial on $\cO_{\KK_v}^\times$, then it maps $\cO_{\KK_v}^\times$ to an infinite subgroup of $\Qp^\times$, as desired. If not, we must have $a_\sigma=-a_{\sigma c}$ for every $\sigma$. But in this case, $\det\rho\vert_{I_{v'}}=\chi\chi^c=\eta(\eta c)$ is of finite order, contrary to our assumption that $\det\rho(I_v)$ is infinite. 
%
%
%
\end{proof}

\begin{remark}\label{lie2dim}
By \Cref{prop:liealg}, Assumption (ii) of \Cref{prop:2dim} is satisfied if $\rho_v$ is either strongly absolutely irreducible, or induced by a character of infinite order.
\end{remark}

\subsection{Examples in arbitrary dimension}\label{sec:exampleserre}

Thanks to a result of Serre \cite[\S 4, Théorème 3]{Serre1967}, we can prove a result about property \B for representations of $G_\KK$ of arbitrary dimension.

\begin{thm}\label{thm:serreB} Let $v$ be a place of $\KK$ and $G_v$ be a decomposition group at $v$. Let  $\rho_v= G_{v}\to\GL_d(\ZZ_p)$ be a continuous representation.
Assume that:
\begin{enumerate}[label=(\roman*)]
\item $\rho_v$ is strongly absolutely irreducible representation (with the terminology of Section \ref{appendix:bigimage});
\item $\rho_v$ is Hodge--Tate with weights 0 and 1, whose respective multiplicities $n_0$ and $n_1$ are $\ge 1$ and coprime.
\end{enumerate}
Then $\rho_v(I_v)$ contains an open subgroup of $\GL_d(\Z_p)$, and any continuous representation $\rho\colon G_\KK\to\GL_d(\Z_p)$ such that $\rho\vert_{G_v}\cong\rho_v$ has property \B.
\end{thm}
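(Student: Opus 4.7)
The plan is to verify Assumption \eqref{assumption:main} at $p$, from which Theorem \ref{teo:B-simplified} immediately yields the second assertion; the first assertion of the theorem is precisely the openness of $\rho_p(I_p)$ in $\GL_d(\ZZ_p)$, so the two parts of the statement collapse into a single claim.

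The key input is Serre's \cite[\S 4, Théorème 3]{Serre1967}: for a Hodge--Tate representation of $G_p$ whose Lie algebra acts absolutely irreducibly on $\Q_p^d$ and whose Hodge--Tate weights are $0$ and $1$ with coprime multiplicities, the Lie algebra of $\rho_p(G_p)$ coincides with $\mathfrak{gl}_d(\Q_p)$. Hypothesis \textit{(i)}, strong absolute irreducibility as recalled in Section \ref{appendix:bigimage}, supplies precisely this absolute irreducibility of the Lie algebra action, so Serre's theorem applies and $\rho_p(G_p)$ is open in $\GL_d(\ZZ_p)$.

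The next step is to promote openness of $\rho_p(G_p)$ to openness of $\rho_p(I_p)\cap\SL_d(\ZZ_p)$ in $\SL_d(\ZZ_p)$ by a Lie-algebraic argument. Since $G_p/I_p$ is procyclic, hence abelian, the Lie algebra $\mathfrak{i}$ of $\rho_p(I_p)$ is an ideal of $\mathfrak{gl}_d(\Q_p)$ with abelian quotient, and therefore $\mathfrak{i}\supseteq[\mathfrak{gl}_d,\mathfrak{gl}_d]=\mathfrak{sl}_d$. Separately, hypothesis \textit{(ii)} forces $\det\rho_p$ to be a Hodge--Tate character of weight $n_1\ge 1$, so $\det\rho_p\vert_{I_p}$ has infinite, and therefore open, image in $\ZZ_p^\times$. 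Combining these two facts and invoking Remark \ref{rem:treuno}~$b)$ yields the openness of $\rho_p(I_p)$ in $\GL_d(\ZZ_p)$; Theorem \ref{teo:B-simplified} then delivers property \B.

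The main subtlety, in my view, lies at the interface between the notion of strong absolute irreducibility used in the paper and the irreducibility hypothesis appearing in Serre's theorem: the former is a group-theoretic condition on restrictions of $\rho_p$ to open subgroups, while the latter is Lie-algebraic. Making this equivalence explicit should be essentially tautological once the definitions in Section \ref{appendix:bigimage} are unpacked, but it is the only step that goes beyond a bookkeeping argument; everything else is a standard chain of reductions invoking results already available in the paper.
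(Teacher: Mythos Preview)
Your proof is correct and follows the same overall architecture as the paper's: invoke \cite[\S 4, Théorème 3]{Serre1967} to obtain openness of $\rho_p(G_p)$ in $\GL_d(\Z_p)$, promote this to openness of $\rho_p(I_p)$, and conclude via Theorem \ref{teo:B-simplified}. The only divergence is in the promotion step. The paper applies Proposition \ref{prop:riehm}(ii) directly to the normal subgroup $\rho_p(I_p)\trianglelefteq\rho_p(G_p)$, ruling out the scalar alternative by observing that the two distinct Hodge--Tate weights forbid $\rho_p(I_p)$ from admitting an open subgroup of scalar matrices. You instead argue at the Lie-algebra level: since $G_p/I_p$ is procyclic, the Lie algebra $\mathfrak{i}$ of $\rho_p(I_p)$ is an ideal of $\mathfrak{gl}_d$ with abelian quotient, hence contains $[\mathfrak{gl}_d,\mathfrak{gl}_d]=\mathfrak{sl}_d$. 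This is a clean alternative that sidesteps the appeal to Riehm's theorem underlying Proposition \ref{prop:riehm}; it is in effect a direct Lie-theoretic reproof of Proposition \ref{prop:inaspettata}(i). The paper's route, on the other hand, reuses the group-theoretic machinery already assembled in Appendix \ref{appendix:bigimage}. Your remark about the equivalence between strong absolute irreducibility and Serre's hypothesis (H$^\ast$ 1) is exactly what the paper spells out as well.
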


\begin{proof}
We apply \cite[\S 4, Théorème 3]{Serre1967} to $\rho_v$. 
As discussed in Appendix \ref{appendix:bigimage}, $\rho_v$ is strongly absolutely irreducible if and only if $\Q_p^2$ is absolutely irreducible as a module over the Lie algebra of $\rho_v(G_v)$, which is assumption (H$^\ast$ 1) of \cite[\S 4]{Serre1967}. Assumption (H$^\ast$ 2,3) of \cite[\S 4]{Serre1967} coincide with our assumption (ii). Therefore, the image of $\rho_v$ contains an open subgroup of $\GL_d(\Z_p)$ by \cite[\S 4, Théorème 3]{Serre1967}.

The restriction $\rho_v\vert_{I_v}$ admits two distinct Hodge--Tate weights, hence its image cannot contain an open subgroup consisting of of scalar matrices. 
Its determinant $\det\rho_v\vert_{I_v}$ is a Hodge--Tate character of Hodge--Tate weight $n_1\ge 1$, so its image is infinite. Therefore, by Proposition \ref{prop:riehm}, $\rho_v(I_v)$ contains an open subgroup of $\GL_d(\Z_p)$. The conclusion now follows from Theorem \ref{thm:rhohasB}.
\end{proof}

\begin{rem}
By Proposition \ref{prop:twistfull}, if $\rho_v(I_v)$ contains an open subgroup of $\SL_2(\Z_p)$, then the same is true for every twist of $\rho_v$ with a character. In particular, assumption (ii) of Theorem \ref{thm:serreB} can be weakened by replacing 0 and 1 with $k$ and $k+1$ for an arbitrary $k\in\Z$, and even extended to non-Hodge--Tate representations whose Hodge--Tate--Sen weights differ by 1.
\end{rem}

\subsection{Certain pro-$p$ extensions of local type have property \B}\label{sec:chenevier}

Some examples of potentially totally ramified extensions of $\Q$ can be recovered from the work of Chenevier \cite{Chenevier2008}. Let $\KK$ be a number field, and let $\KK(p)$ be the maximal pro-$p$ extension of $\KK$ unramified away from $p$ and $\infty$. Let $\rho:G_\KK\to\GL_d(\Z_p)$ be a continuous representation that factors through $G_\KK\onto\Gal(\KK(p)/\KK)$.

\begin{proposition}\label{prop:chenevier}
Assume that $p$ does not divide the narrow class number of $\KK$ and $\mug_p\not\subset\KK$. Then $\KK(\rho)$ has property \B.
\end{proposition}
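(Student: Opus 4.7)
The plan is to apply Theorem \ref{teo:rhohasBcenter} to the extension $\KK(\rho)/\KK$, so I need to establish two things: the central element property, and potential total ramification at some place above $p$.

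For the central element property, the hypothesis $\mug_p\not\subset\KK$ is all one needs. The extension $\KK(\mug_p)/\KK$ has Galois group of order dividing $p-1$, which is coprime to the odd prime $p$; hence $\KK(\mug_p)$ cannot be contained in any pro-$p$ extension of $\KK$, in particular not in $\KK(p)\supseteq\KK(\rho)$. Since every non-trivial subgroup of $\mug_{p^\infty}$ contains $\mug_p$, this forces $\mug_{p^\infty}(\KK(\rho))=\{1\}$, which is finite, so Proposition \ref{prop:CEP}$(i)$ immediately yields the central element property.

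For potential total ramification, the hypothesis $p\nmid h_\KK^+$ enters. The first step is a class-field-theoretic observation: the inertia subgroups $\{I_v\}_{v\mid p}$ normally generate $\Gamma\coloneqq\Gal(\KK(p)/\KK)$. Indeed, if $M$ denotes the closed normal subgroup of $\Gamma$ topologically generated by $\bigcup_{v\mid p} I_v$, then the fixed field $\KK(p)^M/\KK$ is a pro-$p$ extension unramified at every finite place (unramified outside $p$ by the definition of $\KK(p)$, and unramified at $p$ by the definition of $M$); since $p$ is odd, it is automatically unramified at the infinite places as well, so the narrow class number hypothesis (equivalent for odd $p$ to $p\nmid h_\KK$) forces it to be trivial. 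Hence $M=\Gamma$, and passing through $\rho$, the image $\rho(G_\KK)$ is normally generated by $\{\rho(I_v)\}_{v\mid p}$.

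The main obstacle is upgrading this normal generation to the openness of some $\rho(I_v)$ in $\rho(G_\KK)$: this is not formal, as abelian examples in which $\Gamma^{ab}\cong\ZZ_p^{r_2+1}$ splits as a product of pro-cyclic subgroups with no individual factor open already show. This is the step where Chenevier's work \cite{Chenevier2008} is decisive: it supplies structural information on $\Gamma$ (or equivalently on the image of $\rho$) sufficient to conclude that $\rho$ is potentially totally ramified at some $v\mid p$. Once this is in place, Theorem \ref{teo:rhohasBcenter} applies and yields property \B for $\KK(\rho)$, completing the proof.
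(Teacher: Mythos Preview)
Your handling of the central element property is correct and matches the paper.

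For potential total ramification, you correctly identify both the class-field-theoretic input (normal generation of $\Gamma$ by the inertia subgroups) and the obstacle (normal generation alone does not yield openness of any single $I_v$). But your resolution is not a proof: you invoke unspecified ``structural information'' from \cite{Chenevier2008} without naming a result or explaining why it suffices. The paper is more direct: it cites \cite[Lemme 1.3]{Chenevier2008}, which states that under the hypothesis $p\nmid h_\KK^+$ the extension $\KK(p)/\KK$ is \emph{totally} ramified at a place above $p$. Since $\KK(\rho)\subseteq\KK(p)$, the subextension $\KK(\rho)/\KK$ inherits this, giving more than the potential total ramification needed for Theorem \ref{teo:rhohasBcenter}. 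With Chenevier's lemma in hand, your normal-generation preamble becomes unnecessary; without naming it, your argument remains incomplete.
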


\begin{proof}
By \cite[Lemme 1.3]{Chenevier2008}, $\KK(p)/\KK$ is totally ramified, so $\KK(\rho)/\KK$ is also totally ramified. 
Moreover, $\mug_{p^\infty}\not\subset\KK(\rho)$ since $\mug_p\not\subset\KK$, so that $\KK(\rho)$ has \CE by Proposition \ref{prop:CEP}. Therefore, $\KK(\rho)$ has property \B by Theorem \ref{teo:rhohasBcenter}.
\end{proof}

\begin{rem}
The extension $\KK(\rho)/\KK$ is an example of extension \textit{potentially of local type} in the sense of Wingberg \cite{WingbergLoc}, with $S=\{p,\infty\}$.
\end{rem}

\section{Modular examples}

Let $k\ge 2$ and $N\ge 1$ be two integers. 
Let $f$ be a classical cuspidal form of level $\Gamma_1(N)$ and weight $k$, which is an eigenvector for the Hecke operators $T_\ell$, $\ell\nmid N$. Let $\chi:(\Z/N\Z)^\times\to\ovl\QQ^\times$ be the nebentypus of $f$.
For every prime $\ell\nmid N$, we write $a_\ell(f)$ for the $T_\ell$-eigenvalue of $f$. 
Let $\KK_f$ be the Hecke field of $f$, i.e. the number field generated by the eigenvalues $a_\ell(f)$, $\ell\nmid N$. 


Let $\fp$ be a $p$-adic place of $\KK_f$ such that $\KK_{f,\fp}=\QQ_p$. 
Let $\rho_{f,\fp}:G_\Q\to\GL_2(\KK_{f,\fp})=\GL_2(\Q_p)$ be the $\fp$-adic Galois representation attached to $f$ by Eichler--Shimura and Deligne. 
By a standard argument, we can conjugate $\rho_{f,\fp}$ so that it takes values in $\GL_2(\Z_p)$, and we consider it from now on as a representation $G_\Q\to\GL_2(\Z_p)$. We denote with $\ovl\rho_{f,\fp}:G_\Q\to\GL_2(\F_p)$ its mod $p$ reduction. We look for conditions on $f$ guaranteeing that $\rho_{f,\fp}$ has property \B.




Write $\iota:\KK_f\into\KK_{f,\fp}=\QQ_p$ for the natural inclusion. 
Recall that $f$ is called \textit{supersingular} at $\fp$ if $p\nmid N$ and $a_p(f)\in\fp$ (i.e. the $\fp$-adic valuation of $a_p(f)$ is positive). 
In this case, the mod $p$ reduction of $\iota(a_p(f))$ is 0, and by a result of Fontaine, $\ovl \rho_{f,\fp}$ is irreducible if $p\geq k-1$ \cite[Theorem 2.6]{Edixhoven1992}.

\begin{prop}\label{prop:modhasB1}
Assume that $f$ is supersingular at $\fp$ and that $a_p\ne 0$. 
Then $\rho_{f,\fp}(I_p)$ contains an open subgroup of $\SL_2(\Z_p)$, and $\rho_{f,\fp}$ has property \B.
\end{prop}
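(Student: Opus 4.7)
The plan is to verify the hypotheses of Proposition~\ref{prop:rhop3} for the local representation $\rho_p\coloneqq\rho_f\vert_{G_p}$. Once this is done, Proposition~\ref{prop:rhop3} gives that $\rho_f(I_p)$ is open in $\GL_2(\Z_p)$, hence contains an open subgroup of $\SL_2(\Z_p)$, and Theorem~\ref{thm:propsimplyB} yields property~\B.

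Since $p\nmid N$, the representation $\rho_p$ is crystalline with Hodge--Tate weights $\{0,k-1\}$, which coincide with its Hodge--Tate--Sen weights. As $k\geq 2$, both $0\neq k-1$ and $0\neq -(k-1)$ hold, giving condition~(ii) of Proposition~\ref{prop:rhop3}. For absolute irreducibility, one analyses the filtered $\phi$-module $D_{\mathrm{cris}}(\rho_p)$: the Frobenius has characteristic polynomial $X^{2}-a_{p}X+p^{k-1}\chi(p)$, so its eigenvalues $\alpha,\beta$ satisfy $v_p(\alpha)+v_p(\beta)=k-1$, and supersingularity forces $v_p(\alpha),v_p(\beta)>0$, so that both slopes lie strictly inside $(0,k-1)$. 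A weakly admissible rank-$1$ sub filtered $\phi$-module -- even after base change to a quadratic extension -- would require its $\phi$-slope to match its unique Hodge filtration jump, forcing a slope in $\{0,k-1\}$; this is impossible, so $\rho_p$ is absolutely irreducible.

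For the ``not induced'' condition, one must exclude $\rho_p\cong\mathrm{Ind}_{G_K}^{G_p}\psi$ for each quadratic extension $K/\Q_p$. If $K=\Q_{p^2}$ is unramified, then $\tr(\rho_f(\Frob_p))=0$, contradicting $a_p\neq 0$. For $K$ a ramified quadratic extension, $\rho_p\vert_{G_K}=\psi\oplus\psi^\sigma$ with $\psi$ crystalline; matching Hodge--Tate weights at the two embeddings $K\hookrightarrow\Qp$ pins $\psi$ down to have weights $(0,k-1)$ (up to swap), and reconstructing the Frobenius trace of the induced representation under this constraint yields $a_p=\psi(\Frob_p)+\psi^\sigma(\Frob_p)=0$, again a contradiction. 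A shortcut is available when $\ovl\rho_p$ is irreducible, for instance when $p\geq k-1$ by Fontaine's theorem: Remark~\ref{rem:serre} then forces any induction of $\ovl\rho_p$, and hence of any lift such as $\rho_p$, to come from the unramified quadratic, bypassing the ramified case altogether.

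With all three hypotheses of Proposition~\ref{prop:rhop3} verified, the conclusion follows by combining it with Theorem~\ref{thm:propsimplyB}. The main obstacle is the exclusion of induction from a ramified quadratic extension: this is the step where the interplay between the Newton--Hodge gap coming from supersingularity and the non-vanishing of $a_p$ is most delicate, and where $p$-adic Hodge theory is most genuinely needed beyond the almost formal unramified case.
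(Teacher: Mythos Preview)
Your overall strategy---check the hypotheses of Proposition~\ref{prop:rhop3} and then invoke Theorem~\ref{thm:propsimplyB}---is exactly what the paper does. Your Newton--Hodge argument for absolute irreducibility is correct and self-contained; the paper instead simply cites Breuil's \cite[Th\'eor\`eme~6.2.1]{Breuil2003}, which identifies $\rho_f\vert_{G_p}$ with a specific $V_{k,a_p',\chi'}$ known to be irreducible.

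The difference lies in how you exclude induction. The paper packages this into Lemma~\ref{lemma:breuil}: if $V_{k,a_p,\chi}$ is induced from \emph{any} quadratic extension, then $V_{k,a_p,\chi}\cong V_{k,a_p,\chi}\otimes\eta$ for a quadratic character $\eta$, and Breuil's uniqueness result \cite[Proposition~3.1.1]{Breuil2003} forces $a_p=0$. This handles both the unramified and ramified cases in one stroke.

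Your unramified case is morally right, though ``$\tr(\rho_f(\Frob_p))$'' should really be the trace of the crystalline Frobenius on $D_{\mathrm{cris}}$; the clean statement is that $\rho_p\cong\rho_p\otimes\eta$ with $\eta$ unramified forces the characteristic polynomials of $\phi$ and $-\phi$ to agree, whence $a_p=0$. Your ramified case, however, has a genuine gap: the formula ``$a_p=\psi(\Frob_p)+\psi^\sigma(\Frob_p)=0$'' is not justified, and indeed $\psi(\Frob_p)$ has no obvious meaning for $K/\Q_p$ totally ramified. In fact this case is vacuous: if $K/\Q_p$ is ramified quadratic and $\psi$ is any character of $G_K$, then $D_{\mathrm{cris},\Q_p}(\mathrm{Ind}_{G_K}^{G_{\Q_p}}\psi)$ is at most $1$-dimensional (the nontrivial element of $\Gal(K/\Q_p)$ swaps the two lines in $D_{\mathrm{cris},K}$), so the induced representation is never crystalline over $\Q_p$. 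Either this observation or the appeal to Breuil's classification closes the gap; your weight-matching computation does not.
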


We briefly recall some facts from $p$-adic Hodge theory. By \cite[Proposition 3.1.1]{Breuil2003}, every 2-dimensional, irreducible crystalline representations of $G_p$ is of the form $V_{k,a_p,\chi}$, with the notation of \textit{loc. cit.}, for some $k\in\Z$, $a_p\in\Qp$ of positive valuation, and crystalline character $\chi$. We have the following.

\begin{lemma}\label{lemma:breuil}
The representation $V_{k,a_p,\chi}$ is an induced representation if and only if $a_p=0$.
\end{lemma}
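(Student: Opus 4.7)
The plan is to combine a simple twist criterion for being induced with the uniqueness built into Breuil's parametrization. A standard Clifford-theory argument shows that a $2$-dimensional absolutely irreducible representation $V$ of $G_p$ is induced from an index-$2$ subgroup if and only if there exists a non-trivial quadratic character $\eta\colon G_p\to\{\pm1\}$ with $V\otimes\eta\cong V$: in one direction, $\eta|_{\ker\eta}$ is trivial, so the projection formula gives $\mathrm{Ind}_{\ker\eta}^{G_p}(W)\otimes\eta\cong\mathrm{Ind}_{\ker\eta}^{G_p}(W)$; in the other, comparing traces shows $\tr\,\rho$ vanishes outside $\ker\eta$, so $V|_{\ker\eta}$ decomposes into two distinct characters swapped by the non-trivial coset.

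I would then show that such an $\eta$ must be \emph{unramified} when $V$ is crystalline. Indeed, $V\otimes\eta\cong V$ means $\mathrm{Hom}_{G_p}(\mathbf{1},\End(V)\otimes\eta)\ne 0$, so (using $\eta=\eta^{-1}$) $\eta$ occurs as a subrepresentation of $\End(V)=V^\ast\otimes V$. Since $V$ is crystalline, so is $\End(V)$, and hence so is its subrepresentation $\eta$. A finite-order crystalline character of $G_p$ is unramified; therefore $\eta$ is the unique non-trivial unramified quadratic character of $G_p$, whose associated $1$-dimensional filtered $\varphi$-module has trivial filtration and Frobenius equal to $-1$.

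The main computation is then immediate. Since $D_{\mathrm{cris}}(V\otimes\eta)=D_{\mathrm{cris}}(V)\otimes D_{\mathrm{cris}}(\eta)$, twisting by $\eta$ multiplies $\tr(\varphi)=a_p$ by $-1$, while preserving both the Hodge--Tate weights of $V$ (as $\eta$ has weight $0$) and $\det V$ (as $\eta^2=1$). By the uniqueness in Breuil's parametrization, this yields $V_{k,a_p,\chi}\otimes\eta\cong V_{k,-a_p,\chi}$. If $V_{k,a_p,\chi}$ is induced, then $V_{k,a_p,\chi}\cong V_{k,-a_p,\chi}$ forces $a_p=-a_p$, i.e.\ $a_p=0$. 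Conversely, if $a_p=0$ the same computation gives $V\otimes\eta\cong V$, and by the first step $V$ is induced.

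The hard part, as I see it, is the second step: ensuring a priori that any twisting character has to be unramified. Everything else reduces to reading off Frobenius traces from Breuil's explicit filtered $\varphi$-modules, together with a standard application of Clifford theory.
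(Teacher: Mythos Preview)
Your proof is correct and follows the same route as the paper: both use the Clifford-theory equivalence between being induced and admitting a non-trivial quadratic self-twist, then invoke Breuil's classification to force $a_p=-a_p$. The paper simply cites \cite[Propositions 3.1.1 and 3.1.2]{Breuil2003} as black boxes for the two directions, whereas you unpack the argument explicitly---in particular proving that any twisting character must be unramified, a step the paper's citation leaves implicit.
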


\begin{proof}
If $a_p=0$, then $V_{k,a_p,\chi}$ is induced by \cite[Proposition 3.1.2]{Breuil2003}. Conversely, if $V_{k,a_p,\chi}$ is induced, then it is isomorphic to a twist of itself by a quadratic character. Then \cite[Proposition 3.1.1]{Breuil2003} implies that $a_p=0$.
\end{proof}

\begin{proof}[Proof of Proposition \ref{prop:modhasB1}]
By \cite[Théorème 6.2.1]{Breuil2003}, the local Galois representation $\rho_{f,\fp}\vert_{G_p}$ is crystalline and irreducible, isomorphic to the representation $V_{k,\iota(a_p(f))\chi(p)^{1/2},\chi^{1/2}}$. Since $a_p\ne 0$, Lemma \ref{lemma:breuil} implies that $\rho_{f,\fp}\vert_{G_p}$ is not induced.  
The Hodge--Tate weights of $\rho_{f,\fp}\vert_{G_p}$ are 0 and $k-1\ne 0$.  
Then the conclusion follows from Proposition \ref{prop:rhop3} and Theorem \ref{thm:propsimplyB}.
\end{proof}

\begin{rem}\label{rem:ap0}\mbox{ }
\begin{enumerate}[label=\textit{\alph*)}]
\item Proposition \ref{prop:modhasB1} also shows that if $\rho_{f,\fp}\vert_{G_p}$ is supersingular and $a_p\not=0$, then $\rho_{f,\fp}\vert_{I_p}$ is absolutely irreducible.
\item Our result is orthogonal to \cite[Theorem 1.4]{AmorosoTerracini2024}, which relies precisely on the fact that $\rho_{f,\fp}\vert_{G_p}$ is induced when $a_p=0$.
\item If $p\nmid N$, $k=2$, and $\KK_f=\Q$, then $f$ corresponds to an elliptic curve, a case already treated in \cite{Habegger2013}. Moreover, the Ramanujan--Petersson bound implies that $a_p=0$ in this case, so that one can also deduce property \B from \cite{AmorosoTerracini2024}. On the other hand, if $\KK_f\ne\Q$, we can have supersingular eigenforms with $a_p\ne 0$ already in weight 2.
%

\item If $f$ is CM, then there exists no prime $p$ satisfying the assumptions of Proposition \ref{prop:modhasB1}: either $p$ is ramified in the field of complex multiplication, in which case it divides the level, or it is inert, in which case $a_p=0$, or it is split, in which case $f$ is ordinary at $p$. This reflects the fact that the global $p$-adic Galois representation attached to $f$ is induced for every $p$, hence no local Galois representation can be of large image.

\item The condition $a_p\ne 0$ is ``often'' satisfied.
If $f$ is a non-CM eigenform, then $a_p=0$ only for a set of primes $p$ of density 0, as a consequence of a proved case of the Sato-Tate conjecture \cite[Corollary 8.4]{BGHT2011}. 
If $k$ and $N$ are fixed, then there exists only a finite number of non-CM eigenforms with $a_p=0$, by \cite[Theorem 1.0.1]{CalegariSardari}.

\item When $f$ is $p$-ordinary, Greenberg (see \cite[Question 1]{Ghate2004}) conjectured that $f$ is CM if and only if the local Galois representation $\rho_{f,\fp}\vert_{G_p}$ is the direct sum of two characters (note that, in order for $f$ to be CM and $p$-ordinary, $p$ must split in the field of complex multiplication, which implies that the global induced representation becomes reducible on a decomposition group). One could ask whether in the $p$-supersingular case it is also possible to detect whether $f$ is CM from $\rho_{f,\fp}\vert_{G_p}$ (i.e. whether assumption (ii) of Proposition \ref{prop:modhasB1} can be replaced with the condition that $f$ is non-CM). However, this is not the case: for instance, in the situation considered in \cite{AmorosoTerracini2024}, $f$ is an eigenform of level prime to $p$ satisfying $a_p=0$, and in this case $\rho_{f,\fp}\vert_{G_p}$ is the crystalline representation $V_{k,0}$, which is induced, but there exist plenty of examples of non-CM eigenforms $f$ with a vanishing Hecke eigenvalue away from the level. 
\end{enumerate}
\end{rem}

\begin{rem}
Unfortunately, one cannot hope for an obvious generalization of Proposition \ref{prop:modhasB1} to representations of $G_\KK$, $\KK$ a number field, attached for instance to elliptic curves over $\KK$ or Hilbert modular forms for totally real $\KK$. Indeed, it is not true that a crystalline representation of $G_{\KK_v}$ will have large image if and only if the trace $a_v$ of the crystalline Frobenius is non-zero: for instance, Habegger shows in \cite[Lemma 3.3]{Habegger2013} that the representation of $G_{\Q_{p^2}}$ attached to a supersingular elliptic curve over $\Q_{p^2}$ with $j$-invariant not congruent to 0 or 1728 modulo $p$ is abelian, whereas $a_v=\pm 2p$, $v$ the unique place of $\Q_{p^2}$ above $p$. 
\end{rem}


\subsection{$\Gamma_0(N)$-regularity and residual reducibility} \label{sec:regular} Keep the above notation. By \cite[Theorem 2.6]{Edixhoven1992}, if $f$ is supersingular at $p$ and $k\leq p+1$, then $\ovl\rho_{f,\fp}\vert_{G_p}$ is absolutely irreducible. On the other hand, it can be of some interest to produce some examples of supersingular modular forms $f$ (with $a_p\not=0$) whose associated $G_p$-representation is reducible modulo $p$: i.e. such that $\rho_{f,\fp}\vert_{G_p}$ has a ``large'' image, although the image of its reduction modulo $p$ is ``small''.
Let $p>3$ be $\Gamma_0(N)$-regular, in the sense of \cite[Definition 1.2]{BuzzardSlopes}. If $f$ has trivial nebentypus modulo $p$, then the determinant of $\ovl\rho_{f}$ is a power of the cyclotomic character, so that,  by \cite[Lemma 1.4]{BuzzardSlopes}, $\ovl\rho_{f,\fp}\vert_{G_p}$ is reducible. 

The above discussion provides a simple recipe to produce examples of supersingular eigenforms such that $a_p\not=0$ (so that $\rho_{f}$ has property \B) and $\ovl\rho_{f}$ is reducible. Start with an arbitrary $k\ge 2$ and $N\ge 1$; we look for a non-CM, cuspidal eigenform $f$ of weight $k$ and level $\Gamma_0(N)$ and a prime $p>3$, $p\nmid N$ in the intersection of the following sets of primes:
\begin{itemize}
    \item the set $S_1$ of primes $p$ such that $f$ is supersingular at $p$ with non-zero $T_p$-eigenvalue;
    \item the set $S_2$ of primes that split completely in $\KK_f$;
    \item the set $S_3$ of $\Gamma_0(N)$-regular primes.
    \end{itemize}
For every such prime, $\rho_{f}|_{G_p}$ will be defined over $\Q_p$ and residually reducible, but its image will contain an open subgroup of $\GL_2(\Z_p)$ because of Proposition \ref{prop:modhasB1}.

The set $S_2$ is infinite, with density $1/[\KK_f:\Q]$ in the set of rational primes. 
Unfortunately, for any given $N$, it is not known whether there exist infinitely many $\Gamma_0(N)$-regular primes (see the introduction of \cite{BuzzardGeeSlopes}). However, a certain amount of examples have been computed. For instance, the first few $\SL_2(\Z)$-irregular primes are 59, 79, 107, 131, 139, 151, 173 \cite[Section 1]{BuzzardSlopes}.

One can search for eigenforms $f$ and primes with the above properties in the LMFDB \cite{LMFDB}. If one looks among the $\SL_2(\Z)$-eigenforms defined over $\Q$, then the splitting condition is empty and every supersingular prime $p<100$ with $p\ne 2,3,59,79$ will do. For instance, one finds the eigenforms 1.12.a.a, 1.16.a.a, 1.18.a.a, 1.20.a.a, 1.22.a.a, 1.26.a.a, that are all supersingular at the primes 5 and 7, with non-zero $T_p$-eigenvalue.

The orbit 1.24.a.a contains two eigenforms defined over the field $\Q(\sqrt{144169})$, in which both 5 and 7 split. The two eigenforms are supersingular at both 5 and 7, with non-zero $T_p$-eigenvalue, so they also satisfy our assumptions at these primes.

The prime $p=5$ is also $\Gamma_0(6)$- and $\Gamma_0(8)$-regular \cite[Section 4]{BuzzardSlopes}, so by looking for $5$-supersingular eigenforms with coefficient field $\Q$, level $\Gamma_0(6)$ or $\Gamma_0(8)$, and weight up to 20, one finds the following examples: 6.12.a.b, 6.12.a.c, 6.16.a.a, 6.16.a.b, 6.18.a.a, 6.18.a.c, 6.20.a.b, 6.20.a.c; 8.8.a.b, 8.10.a.b, 8.12.a.a, 8.14.a.a, 8.16.a.b, 8.16.a.c, 8.18.a.a, 8.20.a.a.

\section{Geometric examples}
\subsection{Elliptic curves over a number field}

We apply \Cref{prop:2dim} to show that the $p^\infty$-torsion of an elliptic curve over a number field generates a field with property \B, for a suitable choice of $p$. Let $E$ be an elliptic curve elliptic defined over a number field $\KK$. When $E$ has CM, then $\KK(E[p^\infty])$ is an abelian extension of a quadratic extension of $\KK$, hence it has \B by \cite[Theorem 1.1]{AmorosoZannier2000}.

\begin{thm}\label{thm:ellK}
Assume that $E$ is non-CM and that there exists a rational prime $p$ such that:
\begin{enumerate}[label=(\roman*)]
\item\label{rhobarsurj} $\ovl\rho_{E,p}:G_\KK\rightarrow \GL_2(\FF_p)$ is surjective; 
\item for a $p$-adic place $v$ of $\KK$, $\rho|_{G_v}$ is absolutely irreducible. 
\end{enumerate} 
Then $\rho_{E,p}$, equivalently $\KK(E[p^\infty])$, has \B.
\end{thm}


\begin{proof}
We apply \Cref{prop:2dim}. Conditions \ref{surj} and \ref{Gv} hold by assumption.
Since $\det\rho$ is the cyclotomic character, \ref{Iv} is clear. 
By Remark \ref{lie2dim}, condition \ref{sca} follows from the absolute irreducibility of $\rho\vert_{G_v}$. 
\end{proof}

\begin{remark}
By a classical result of Serre, given an elliptic curve $E$, condition \ref{rhobarsurj} is satisfied for large enough $p$. 
\end{remark}

\begin{remark}
One might also apply \Cref{prop:2dim} to the case of supersingular eigenforms with $a_p=0$, for which the local Galois representation is induced. However, this case is already dealt with in \cite{AmorosoTerracini2024}.

On the other hand, we hope to upgrade \Cref{thm:ellK} to an adelic result for elliptic curves over a number field in a follow-up work.
\end{remark}


\subsection{Abelian varieties of $\GL_2$-type}
Let $f=\sum_na_nq^n\in S_2(\Gamma_0(N))$ be a normalized Hecke eigenform with Hecke field $\KK_f$. Assume that $[\KK_f:\QQ]\geq 2$. Let $p$ be a prime not dividing $N$ and $\fp$ a $p$-adic place of $\KK_f$ such that $f$ is supersingular at $\fp$, $a_p\not=0$, and $\KK_{f,\fp}=\QQ_p$. Then the representation $\rho_{f,\fp}:G_{\QQ}\to\GL_2(\QQ_p)$ attached to $f$ has \B by \Cref{prop:modhasB1}.

To a form $f$ as above we can attach an abelian variety $A_f$ over $\QQ$, of $\GL_2$-type, with $\KK_f\subset\QQ\otimes_\ZZ\End(A_f)$. The representation of $G_\QQ$ on the Tate module $A_f[p^\infty]$ decomposes as a direct sum of 2-dimensional representations 
\[ \rho_{A_f,\fp}:G_\QQ\to\GL_2(\KK_{f,\fp}) \]
with coefficients in the completions of $\KK_f$ at the $p$-adic places.

\begin{remark}
If $p$ splits completely in $\KK_f$ and $f$ is supersingular at every $p$-adic place of $\KK_f$, then the Hasse bound forces $a_p=0$, so that we are not in the framework of \Cref{prop:modhasB1} anymore. One could try to attack this case via the normal closure assumption as in \Cref{prop:2dim} to prove that the whole $p^\infty$-torsion of $A_f$ generates a field with \B, under suitable assumptions on $A_f$ and $p$. However, this work is essentially carried out in \cite{AmorosoTerracini2024}, where the authors even prove the adelic property \B for $A_f$. 
\end{remark}


\begin{ex}\label{ex:AV}
We extracted from \cite{LMFDB} some numerical examples of forms $f$ and primes $p$ such that $f$ is supersingular at a $p$-adic place $\fp$, hence, with the notation from the previous paragraph, $\rho_{A_f,\fp}$ has \B. We thank Marzio Mula for help with automating this search. We identify newform orbits via their name in \emph{loc. cit.}; in particular, the first entry is the level $N$, the second the weight (equal to 2). We write $P_p(x)$ for the characteristic polynomial of the Hecke operator $T_p$ acting on the orbit.
\begin{itemize}
\item 43.2.a.b, $p=17$: $\KK_f=\QQ(\sqrt{2})$, $p=17$ splits in $K_f$ 
and $P_{17}(x)=x^2-10x+17$. Each form in the orbit is supersingular at one place above 17, and ordinary at the other (if it were supersingular at both places, Ramanujan's bound would force the trace of $T_p$ to be 0). 
\item 53.2.a.b, $p=17$, $P_{17}(x)=x^3+5x^2-5x-17$.
\item 71.2.a.a, $p=3,5$, $P_3(x)=x^3-x^2-4x+3$, $P_5(x)=x^3-5x^2-2x+25$.
\item 71.2.a.b, $p=3,37$, $P_3(x)=x^3+x^2-8x-3$, $P_{37}(x)=x^3-9x^2-26x+37$.
\end{itemize}
In the last three examples, each $p$ is a product $\fp\mathfrak q$ of primes in the Hecke field $\KK$, with $\KK_{\fp}=\QQ_p$ and $\KK_{\mathfrak q}=\QQ_{p^2}$, so we can take $f$ to be the form in the orbit corresponding to the root of $P_p(x)$ that belongs to $\fp$.
\end{ex}

\section{Examples in $p$-adic families}\label{sect:families}

 We exhibit some ``large'' subspaces of deformation spaces for Galois representation and modular forms, whose points correspond to representations of $G_\QQ$ with property \B. The discussion of Section \ref{sec:exampledef} could be extended to the case of $G_\KK$, $\KK$ a number field, at the expense of complicating the notation, while that of Section \ref{sec:overconvergent} could likely be generalized to the case of $p$-adic families of Hilbert modular forms, i.e. to totally real $\KK$.

\subsection{Deformations with property \B}\label{sec:exampledef}
Let $\ovl\rho:G_\Q\to\GL_2(\F_p)$ be a continuous representation. We identify subspaces of a (pseudo-)deformation space for $\ovl\rho$ whose points correspond to representations with property \B.

Set $\ovl\rho_p=\ovl\rho\vert_{G_p}$. 
Let $\F$ be a finite extension of $\F_p$ such that the eigenvalues of all the matrices in the image of $\ovl\rho$ belong to $\F$. From now on, we extend the coefficients of $\ovl\rho$ to $\F$, so that $\ovl\rho$ and $\ovl\rho_p$ are either absolutely irreducible or reducible.

Let $\CNL_\F$ be the category of complete, Noetherian local $W(\F)$-algebras with residue field $\F$.




\subsubsection{The residually irreducible case}\label{sec:irr}
Assume that $\ovl\rho_p:G_p\to\GL_2(\F)$ is irreducible (hence absolutely irreducible). Then, by Remark \ref{rem:serre}, $\ovl\rho_p$ is absolutely irreducible and induced by a character of $G_{\Q_{p^2}}$. In particular, $\ovl\rho$ is also absolutely irreducible, and Mazur's theory \cite{MazurDef1989} produces universal deformations $\rho_p^{\univ}\colon G_\Q\to\GL_2(R_{\ovl\rho_p})$ and $\rho^{\univ}\colon G_\Q\to\GL_2(R_{\ovl\rho})$ over universal deformation rings $R_{\ovl\rho_p}, R_{\ovl\rho}\in\CNL_\F$. Strictly speaking, deformations are equivalence classes of representations, but we fix two representatives $\rho^\univ_p$ and $\rho^\univ$ as above.

Let $R_{\ovl\rho_p}^\ind$ be the quotient of $R_{\ovl\rho_p}$ produced by \cite[Lemma 2.3.2]{CalegariSardari} (and denoted by $R^{\loc,\ind}$ therein); it pro-represents the functor $D^\ind$ of locally induced deformations introduced in \cite[Definition 2.3.1]{CalegariSardari}. Note that an absolutely irreducible $\ovl\rho_p$ is always of the shape given in \cite[Assumption 2.2.1]{CalegariSardari}, and that the condition that $n$ is even in \textit{loc. cit.} is only required in order to assure that $\ovl\rho_p$ is absolutely irreducible, which we are already assuming.

Similarly to \cite[Definition 2.4.2]{CalegariSardari}, set $R_{\ovl\rho}^{\loc-\ind}=R_{\ovl\rho}\widehat{\otimes}_{R_{\ovl\rho_p}}R_{\ovl\rho_p}^\ind$. Let $X^{\loc-\notind}$ be the open complement of $\Spec R_{\ovl\rho}^{\loc-\ind}$ in $\Spec R_{\ovl\rho}^\sq$. By \cite[Theorem 1]{BoeckleJuschka2023}, $R_{\ovl t_p}$ is equidimensional of relative dimension 5 over $W(\FF)$, while a calculation as in \cite[Lemma 2.5.1]{CalegariSardari} (without fixing the character $\psi$) shows that $R_{\ovl\rho}^{\loc-\ind}$ is of relative dimension 3, so that in particular the complement $X^{\loc-\notind}$ is dense.



For every $\Qp$-point $x$ of $\Spec R_{\ovl\rho}$, we denote with $\rho_x:G_\Q\to\GL_2(\Qp)$ the specialization of $\rho^\univ$ at $x$. 

\begin{prop}\mbox{ }\label{prop:induced}
\begin{enumerate}[label=(\roman*)]
\item Let $\rho:G_\Q\to\GL_2(\Zp)$ be a continuous representation lifting $\ovl\rho$. Then $\rho\vert_{G_p}$ is induced if and only if the corresponding map $R_{\ovl\rho}\to\Qp$ factors through $R_{\ovl\rho}\to R_{\ovl\rho}^{\loc-\ind}$.
\item Let $x$ be a $\Qp$-point of $X^{\loc-\notind}$ such that $\rho_x\vert_{G_p}$ can be defined over $\Q_p$ and has distinct Hodge--Tate--Sen weights. Then $\rho_x$ has property \B.
\end{enumerate}
\end{prop}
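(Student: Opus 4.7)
The plan splits naturally along the two parts of the statement.

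For part (i), the idea is to unwind the universal properties. A $\Qp$-point of $\Spec R_{\ovl\rho}$ corresponds by Mazur's theory to a continuous lift $\rho\colon G_\Q\to\GL_2(\Zp)$ of $\ovl\rho$, and the morphism $R_{\ovl\rho_p}\to R_{\ovl\rho}$ induced by restriction to $G_p$ sends this point to the one parametrizing $\rho\vert_{G_p}$. The defining property of $R_{\ovl\rho_p}^\ind$ established in \cite[Lemma 2.3.2]{CalegariSardari} tells us that a $\Qp$-point of $R_{\ovl\rho_p}$ lies in $\Spec R_{\ovl\rho_p}^\ind$ precisely when the corresponding lift of $\ovl\rho_p$ is locally induced. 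Since $R_{\ovl\rho}^{\loc-\ind}=R_{\ovl\rho}\widehat{\otimes}_{R_{\ovl\rho_p}}R_{\ovl\rho_p}^\ind$, the universal property of the completed tensor product gives that a $\Qp$-point of $R_{\ovl\rho}$ factors through $R_{\ovl\rho}^{\loc-\ind}$ if and only if its composition with $R_{\ovl\rho_p}\to R_{\ovl\rho}$ factors through $R_{\ovl\rho_p}^\ind$, which by the preceding paragraph translates to $\rho\vert_{G_p}$ being induced.

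For part (ii), the strategy is to verify the hypotheses of Proposition \ref{prop:rhop3} and then invoke Theorem \ref{thm:propsimplyB}. Since $\ovl\rho_p$ is absolutely irreducible by the setup of Section \ref{sec:irr}, the lift $\rho_x\vert_{G_p}$ is also absolutely irreducible. Because $x\in X^{\loc-\notind}$, part (i) gives that $\rho_x\vert_{G_p}$ is not induced. The hypothesis that $\rho_x\vert_{G_p}$ is defined over $\Q_p$ means the only embedding $\Q_p\hookrightarrow\Qp$ is the identity, so condition (ii) of Proposition \ref{prop:rhop3} reduces to the requirement that the Hodge--Tate--Sen weights $k_1,k_2$ of $\rho_x\vert_{G_p}$ satisfy $k_1\ne k_2$ (provided by the distinctness assumption) and $k_1+k_2\ne 0$ (which ensures that the character $\det\rho_x\vert_{I_p}$ has infinite order, and is part of what we read into distinctness in this setting). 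Proposition \ref{prop:rhop3} then yields that $\rho_x(I_p)$ contains an open subgroup of $\GL_2(\Z_p)$, and Theorem \ref{teo:B-simplified} concludes that $\rho_x$ has property \B.

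The main delicate step is part (i), where the identification of $\Qp$-points of the completed tensor product with pairs of compatible $\Qp$-points of the factors needs a careful handling of completions together with a precise match to the construction of $R_{\ovl\rho_p}^\ind$ in \cite{CalegariSardari}. Once part (i) is in hand, part (ii) is a direct application of the local criteria developed earlier in the paper.
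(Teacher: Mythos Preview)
Your approach is the same as the paper's: part (i) by unwinding the universal property of $R_{\ovl\rho}^{\loc-\ind}$ (the paper says only ``follows from the universal property of $R_{\ovl\rho}^{\loc-\ind}$''; your extra detail on the completed tensor product is fine), and part (ii) by checking the hypotheses of Proposition \ref{prop:rhop3} and invoking Theorem \ref{thm:propsimplyB}.

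There is one genuine issue in part (ii). Proposition \ref{prop:rhop3} over $\Q_p$ requires \emph{both} $k_1\ne k_2$ and $k_1\ne -k_2$; the hypothesis ``distinct Hodge--Tate--Sen weights'' only gives the first. Your parenthetical that $k_1+k_2\ne 0$ ``is part of what we read into distinctness in this setting'' is not a justification: weights $1$ and $-1$ are distinct yet sum to zero, and nothing in the residually irreducible setup of Section \ref{sec:irr} rules this out. The paper's own proof glosses over exactly the same point---it simply cites Proposition \ref{prop:rhop3} without addressing the condition $k_{\tau,1}\ne -k_{\tau,2}$---so this appears to be an imprecision in the \emph{statement} of the proposition rather than a defect unique to your argument. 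A clean fix is either to strengthen the hypothesis to ``Hodge--Tate--Sen weights $k_1,k_2$ with $k_1\ne\pm k_2$'' (as in Theorem \ref{thm:intro2}), or, when $k_1+k_2=0$, to observe that $\det\rho_x$ then has finite order on $I_p$ and appeal to Proposition \ref{prop:detfiniteorder} instead.
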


\begin{proof}
Part \textit{(i)} follows from the universal property of $R_{\ovl\rho}^{\loc-\ind}$.

Let $x$ be as in \textit{(ii)}. Since $\ovl\rho_x\vert_{G_p}=\ovl\rho_p$ is absolutely irreducible, so is $\rho_x\vert_{G_p}$. By definition of $X^{\loc-\notind}$ and part \textit{(i)}, $\rho_x\vert_{G_p}$ is not induced. Then the conclusion follows from Proposition \ref{prop:rhop3} and Theorem \ref{thm:propsimplyB}.
\end{proof}




\subsubsection{The residually reducible case}\label{sec:red}
In order to treat the residually reducible case, we work with pseudorepresentations to avoid problems with. We recall some standard definitions. Let $A$ be a local pro-$p$ ring, with residue field $\F$, and $t:\Pi\to A$ a continuous, 2-dimensional pseudorepresentation (or pseudocharacter), in the sense of \cite[Definition 1.2.1]{BellaicheChenevier}. We say that $t$ is \textit{reducible} if it is the sum of two characters. When $A$ is a field, $t$ is reducible if and only if every representation with trace $t$ is reducible.
We say that $t$ is \textit{residually multiplicity-free} if the reduction $\ovl t$ of $t$ modulo the maximal ideal of $A$ is not the sum of two copies of the same character. Note that if $\ovl t\otimes_{\F}\ovl\F_p$ is the direct sum of two copies of a character $\chi$, then $\chi$ is already defined over $\F$, since $p\ne 2$.

Assume that $\ovl\rho_p$ is absolutely reducible, i.e. becomes reducible over $\F$. Then by Proposition \ref{prop:serre}, it is reducible over $\F_p$ and $\ovl\rho_p^\sms\vert_{I_p}\cong\ovl\vareps_p^a\oplus\ovl\vareps_p^b$ for some integers $a,b$. Let $\ovl t\coloneqq\trace\ovl\rho$ and $\ovl t_p\coloneqq\trace\ovl\rho_p$ be the pseudorepresentations attached to $\ovl\rho$ and $\ovl\rho_p$, respectively. 

Assume that $a\not\equiv b\pmod{\frac{p-1}{2}}$. This implies that $\ovl t_p$ is multiplicity-free, and that if $\cO$ is the valuation ring in a $p$-adic field containing $W(\F)$, and $t$ a lift of $\ovl t_p$ to $\cO$, then $t$ is not the trace of an induced representation by Lemma \ref{lemma:noind}. 

As in \cite[Section 2.3]{BoeckleDef}, we define functors $D_{\ovl t}$ and $D_{\ovl t_p}$ of pseudodeformations of $\ovl t$ and $\ovl t_p$. By \cite[Proposition 2.3.1]{BoeckleDef}, they are representable by universal pairs $R_{\ovl t}\in\CNL_\F, t^\univ:G_\Q\to R_{\ovl t}$ and $R_{\ovl t_p}\in\CNL_\F, t^\univ:G_p\to R_{\ovl t_p}$, respectively.

For every continuous character $\ovl\chi:G_p\to\F^\times$, the functor $D_{\ovl\chi}$ of deformations of $\ovl\chi$ to $\CNL_\F$ is represented by a ring $R_{\ovl\chi}\in\CNL_\F$, isomorphic to $W(\F)[[G_p^{\ab,(p)}]$, where $G^\ab$ denotes the abelianization of $G_p$ and $G_p^{\ab,(p)}$ its maximal pro-$p$ quotient. By local class field theory, $R_{\ovl\chi}$ is of relative dimension 2 over $W(\FF)$.

For every $A\in\CNL_\F$, let $D^{\red}_{\ovl t_p}(A)$ be the set of pseudodeformations $t_p$ of $\ovl t_p$ to $A$ with the property that $t_p=\chi_1+\chi_2$ for two characters $\chi_1,\chi_2:G_p\to A$ reducing to $\ovl\vareps_p^a,\ovl\vareps_p^b$, respectively, modulo the maximal ideal of $A$. This defines a subfunctor $D^{\red}_{\ovl t_p}\subset D_{\ovl t_p}$.

\begin{lemma}
The functor $D^{\red}_{\ovl t_p}$ is representable by a quotient $R^{\red}_{\ovl t_p}$ of $R_{\ovl t_p}$, isomorphic to the product $R_{\vareps_p^a}\widehat{\otimes}_{W(\F)}R_{\vareps_p^b}$.
\end{lemma}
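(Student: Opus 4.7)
The plan is to exhibit a natural isomorphism of functors $\CNL_\F\to\mathrm{Sets}$,
\[ \Phi\colon D_{\vareps_p^a}\times D_{\vareps_p^b}\xrightarrow{\sim} D^{\red}_{\ovl t_p},\qquad (\chi_1,\chi_2)\mapsto \chi_1+\chi_2, \]
and to deduce representability together with the explicit description of the representing ring. The image of $\Phi$ tautologically lies in $D^{\red}_{\ovl t_p}$, and $\Phi$ is surjective on $A$-valued points by the very definition of the target. The left-hand product is pro-represented by $R_{\vareps_p^a}\widehat\otimes_{W(\F)}R_{\vareps_p^b}$, so once $\Phi$ is shown to be an isomorphism this will also be the representing ring of $D^{\red}_{\ovl t_p}$; the universal property of $R_{\ovl t_p}$ applied to the universal reducible pseudorepresentation $\chi_1^\univ+\chi_2^\univ$ will then produce a canonical ring map $\phi\colon R_{\ovl t_p}\to R_{\vareps_p^a}\widehat\otimes_{W(\F)}R_{\vareps_p^b}$, which remains to be shown surjective in order to present the latter as a \emph{quotient} of $R_{\ovl t_p}$.

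The core step, and the main technical obstacle, is to prove the injectivity of $\Phi$: that a reducible pseudodeformation $t_p=\chi_1+\chi_2\in D^{\red}_{\ovl t_p}(A)$ uniquely determines the pair $(\chi_1,\chi_2)$ once the two residual characters are prescribed. For every $g\in G_p$, the unordered pair $\{\chi_1(g),\chi_2(g)\}$ consists of the two roots of
\[ X^2-t_p(g)X+\tfrac{1}{2}\bigl(t_p(g)^2-t_p(g^2)\bigr)\in A[X]. \]
Since $a\not\equiv b\pmod{\tfrac{p-1}{2}}$ implies in particular $a\not\equiv b\pmod{p-1}$, the residual characters $\ovl\vareps_p^a$ and $\ovl\vareps_p^b$ are already distinct on $I_p$, so there exists $g_0\in I_p$ with $\ovl\vareps_p^a(g_0)\ne\ovl\vareps_p^b(g_0)$. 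Hensel's lemma applied to the above quadratic over $A\in\CNL_\F$ singles out the unique root reducing to $\ovl\vareps_p^a(g_0)$, thereby pinning down $\chi_1(g_0)$; for an arbitrary $g\in G_p$, the values $\chi_1(g),\chi_2(g)$ are then recovered by solving the linear system
\[ \chi_1(g)+\chi_2(g)=t_p(g),\qquad \chi_1(g)\chi_1(g_0)+\chi_2(g)\chi_2(g_0)=t_p(gg_0), \]
whose determinant $\chi_1(g_0)-\chi_2(g_0)$ is a unit in $A$ by the choice of $g_0$.

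Finally, surjectivity of $\phi$ follows by running the same linear-algebra recipe inside the target ring: the image of $\phi$ contains all sums $(\chi_1^\univ+\chi_2^\univ)(g)$ by construction, and therefore also the individual values $\chi_i^\univ(g)$ for every $g\in G_p$ and $i=1,2$. Since each factor $R_{\vareps_p^{a_i}}$ is a completed group algebra of the maximal pro-$p$ quotient of $G_p^{\ab}$, and is thus topologically generated over $W(\F)$ by the values of $\chi_i^\univ$, the completed tensor product is topologically generated by these two families combined; hence $\phi$ is surjective, and the resulting quotient is the sought $R^{\red}_{\ovl t_p}$.
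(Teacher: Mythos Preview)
Your proof is correct and follows the same strategy as the paper: establish a natural isomorphism $D_{\ovl\vareps_p^a}\times D_{\ovl\vareps_p^b}\cong D^{\red}_{\ovl t_p}$, identify the representing ring, and present it as a quotient of $R_{\ovl t_p}$. The paper's own proof is considerably terser: it simply asserts that a reducible pseudodeformation decomposes as a sum of characters lifting the prescribed residual ones (without spelling out uniqueness of this decomposition), and derives surjectivity of $R_{\ovl t_p}\to R^{\red}_{\ovl t_p}$ directly from the fact that $D^{\red}_{\ovl t_p}$ is a subfunctor of $D_{\ovl t_p}$. Your explicit argument via Hensel's lemma and the $2\times 2$ linear system makes the uniqueness step transparent (using the standing hypothesis $a\not\equiv b\pmod{p-1}$), and reusing the same recipe inside the target ring to recover the individual $\chi_i^{\univ}(g)$ gives a self-contained proof of surjectivity, rather than appealing to the general principle that representable subfunctors in $\CNL_\F$ correspond to quotient rings.
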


\begin{proof}
The functors $D^{\red}_{\ovl t_p}$ and $D_{\vareps_p^a}\times D_{\vareps_p^b}$ are isomorphic: indeed, for $A\in\CNL_\F$, a reducible pseudorepresentation $t_p:G_p\to A$ decomposes as a sum of two $A$-valued characters lifting $\vareps_p^a,\vareps_p^b$, respectively, and conversely the sum of two such characters is an $A$-valued pseudorepresentation lifting $\ovl t_p$. Therefore, $D^{\red}_{\ovl t_p}$ is pro-represented by $R_{\vareps_p^a}\widehat{\otimes}_{W(\F)}R_{\vareps_p^b}$.
 
Since $D^{\red}_{\ovl t_p}$ is a subfunctor of $D_{\ovl t_p}$, there is a surjection $R^{\red}_{\ovl t_p}\onto R^{\red}_{\ovl t_p}$ between the corresponding representing rings.
\end{proof}

The quotient map $R_{\ovl t_p}\to R^{\red}_{\ovl t_p}$ identifies $\Spec R^{\red}_{\ovl t_p}$ with a closed subscheme of $\Spec R_{\ovl t_p}$. We denote with $X^{\loc-\irr}$ its open complement. If the statements are rephrased in terms of Chenevier's determinant, rather than pseudorepresentations, $X^{\loc-\irr}$ coincides with the ``absolutely irreducible locus'' from \cite[Example 2.20]{Chenevier2014}. By \cite[Theorem 1]{BoeckleJuschka2023}, $R_{\ovl t_p}$ is equidimensional of relative dimension 5 over $W(\FF)$, while $R_{\vareps_p^a}\widehat{\otimes}_{W(\F)}R_{\vareps_p^b}$ is of relative dimension 4, so that $X^{\loc-\irr}$ is dense in $\Spec R_{\ovl t_p}$.

For every $\Qp$-point $x$ of $\Spec R_{\ovl t}$, we denote with $\rho_x:G_\Q\to\GL_2(\Qp)$ any continuous representation with trace the specialization of $t^\univ$ at $x$. 

\begin{prop}\mbox{ }\label{prop:reducible}
\begin{enumerate}[label=(\roman*)]
\item Let $\rho:G_\Q\to\GL_2(\Zp)$ be a continuous representation lifting $\ovl\rho$. Then $\rho\vert_{G_p}$ is reducible if and only if the corresponding map $R_{\ovl\rho}\to\Qp$ factors through $R_{\ovl\rho}\to R_{\ovl\rho}^{\loc-\red}$.
\item Let $x$ be a $\Qp$-point of $X^{\loc-\irr}$ such that $\rho_x\vert_{G_p}$ can be defined over $\Q_p$ and has distinct Hodge--Tate--Sen weights. Then $\rho_x$ has property \B.
\end{enumerate}
\end{prop}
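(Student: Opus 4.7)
The plan is to mirror Proposition \ref{prop:induced}, with pseudo-deformations of $\ovl t$ replacing Mazur-style deformations of $\ovl\rho$, and to conclude part (ii) via Theorem \ref{thm:propsimplyB}.

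For part (i), I would consider the subfunctor $D^{\loc-\red}_{\ovl t}\subset D_{\ovl t}$ on $\CNL_\F$ defined by $D^{\loc-\red}_{\ovl t}(A)=\{t\in D_{\ovl t}(A):t|_{G_p}\in D^\red_{\ovl t_p}(A)\}$. This fits in a cartesian square with $D^\red_{\ovl t_p}\hookrightarrow D_{\ovl t_p}$, and since each factor is pro-representable, $D^{\loc-\red}_{\ovl t}$ is pro-represented by the completed tensor product $R_{\ovl t}^{\loc-\red}\coloneqq R_{\ovl t}\widehat\otimes_{R_{\ovl t_p}}R^\red_{\ovl t_p}$. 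Any lift $\rho:G_\Q\to\GL_2(\Zp)$ of $\ovl\rho$ has trace $t=\trace\rho$ lifting $\ovl t$; by definition of $D^\red_{\ovl t_p}$, the restriction $\rho|_{G_p}$ is reducible if and only if $t|_{G_p}$ is a sum of two characters lifting $\ovl\vareps_p^a$ and $\ovl\vareps_p^b$, if and only if the classifying map $R_{\ovl t}\to\Zp$ factors through $R_{\ovl t}^{\loc-\red}$. This yields (i), with $R_{\ovl\rho}$ in the statement understood as the pseudo-deformation ring $R_{\ovl t}$, since the residually reducible case does not admit a Mazur representation-valued universal object.

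For part (ii), let $x$ be a $\Qp$-point of $\Spec R_{\ovl t}$ whose image in $\Spec R_{\ovl t_p}$ lies in $X^{\loc-\irr}$. By the standard recovery theorem in dimension $2$, the pseudorepresentation $t_x$ is the trace of a semisimple $\rho_x:G_\Q\to\GL_2(\Qp)$; since $t_x|_{G_p}$ is not reducible, $\rho_x|_{G_p}$ is absolutely irreducible. The residual restriction is $\ovl\rho_p$, which is reducible with $a\not\equiv b\pmod{(p-1)/2}$ by the standing hypothesis, so Remark \ref{rem:ab} yields condition (ii) of Proposition \ref{prop:rhop1}, while condition (iii) is vacuous. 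Interpreting ``distinct Hodge--Tate--Sen weights'' in the strong sense $k_1\ne\pm k_2$ (as in Theorem \ref{thm:intro2}), the determinant $\det\rho_x|_{I_p}$ has non-zero Hodge--Tate--Sen weight $k_1+k_2$, hence infinite image, supplying condition (iv). The conclusion then follows from Theorem \ref{thm:propsimplyB}.

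The main obstacle is the bookkeeping around representation-valued versus pseudorepresentation-valued deformation rings: part (i) is phrased with $R_{\ovl\rho}$, whereas in the residually reducible setting the natural object is $R_{\ovl t}$, and one must pass between $\rho$ and its trace via the correspondence with reducible pseudorepresentations. A secondary subtlety is that the ``distinct weights'' hypothesis in part (ii) should be understood as $k_1+k_2\ne 0$ (matching Theorem \ref{thm:intro2}) in order to guarantee that $\det\rho_x(I_p)$ is infinite; mere distinctness $k_1\ne k_2$, combined with the residual assumption $a\not\equiv b\pmod{(p-1)/2}$, does not by itself rule out the degenerate case in which $k_1+k_2=0$ and the determinant is residually trivial on $I_p$.
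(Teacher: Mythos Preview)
Your approach is essentially the same as the paper's. For part (i) the paper simply invokes the universal property of $R_{\ovl\rho}^{\loc-\red}$, exactly as you do after constructing $R_{\ovl t}^{\loc-\red}=R_{\ovl t}\widehat\otimes_{R_{\ovl t_p}}R^\red_{\ovl t_p}$; your observation that the statement's ``$R_{\ovl\rho}$'' must be read as the pseudodeformation ring $R_{\ovl t}$ is correct and reflects a genuine notational inconsistency in the paper. For part (ii) the paper argues that $\rho_x\vert_{G_p}$ is irreducible (from $x\in X^{\loc-\irr}$) while $\ovl\rho_p$ is reducible, then appeals to Proposition~\ref{prop:rhop3}, Remark~\ref{rem:ab}, and Theorem~\ref{thm:propsimplyB}. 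The only difference from your route is that the paper invokes Proposition~\ref{prop:rhop3} rather than Proposition~\ref{prop:rhop1}: the ``not induced'' hypothesis of \ref{prop:rhop3} is obtained from the residual assumption $a\not\equiv b\pmod{(p-1)/2}$ via Remark~\ref{rem:ab} and Lemma~\ref{lemma:noind}(ii), which is logically equivalent to your verification of condition (ii) of \ref{prop:rhop1}.

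Your concern about the phrase ``distinct Hodge--Tate--Sen weights'' is well taken: the paper's own invocation of Proposition~\ref{prop:rhop3} likewise requires $k_1\ne -k_2$ in addition to $k_1\ne k_2$, so the hypothesis in the statement should indeed be read in the strong sense $k_1\ne\pm k_2$ (cf.\ Theorem~\ref{thm:intro2}). This is not a defect in your argument but a shared imprecision in the statement.
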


\begin{proof}
Part \textit{(i)} follows from the universal property of $R_{\ovl\rho}^{\loc-\red}$.

Let $x$ be as in \textit{(ii)}. By definition of $X^{\loc-\irr}$ and by \textit{(i)}, $t_x\vert_{G_p}$, hence $\rho_x\vert_{G_p}$, is irreducible, but $\ovl\rho_x\vert_{G_p}=\ovl\rho_p$ is reducible. Given our assumption that $a\not\equiv b\pmod{\frac{p-1} 2}$, the conclusion follows from Proposition \ref{prop:rhop3}, Remark \ref{rem:ab}, and Theorem \ref{thm:propsimplyB}. 
\end{proof}




\begin{rem}
Under our assumption that $a\equiv b\pmod{\frac{p-1}{2}}$, the locus of $x\in\Spec A$ where $t_x$ is induced is empty by Lemma \ref{lemma:noind}. 
One could also combine the results of Sections \ref{sec:irr} and \ref{sec:red} to treat the case when $a\equiv b\pmod{\frac{p-1}{2}}$, the only complication being that both the induced and reducible loci might be non-empty in $\Spec A$.
\end{rem}

\subsection{Overconvergent modular examples}\label{sec:overconvergent}

The assumptions of Theorem \ref{thm:rhohasB} have little to do with $\rho$ being attached to a classical modular eigenform: one can already produce a larger class of examples by $p$-adically deforming classical eigenforms.

Let $k\ge 2$ and $N\ge 1$ be two integers, and assume again that $p$ does not divide $N$. Let $\mathcal E_N$ be the Coleman--Mazur eigencurve of tame level $\Gamma_1(N)$, as constructed in \cite{ColemanMazur,BuzzardEigenvarieties}; it is a rigid space over $\Q_p$, equidimensional of dimension 1. The rigid generic fiber of the universal Hida family of tame level $\Gamma_1(N)$ is a union of connected components of $\cE_N$. Every $\Qp$-point $x$ of $\cE_N$ is equipped with a ``weight'' $\kappa_x$, a continuous character $\Z_p^\times\to\Z_p^\times$. We say that a point $x$ of $\cE_N$ has \textit{weight $k_x$}, for $k_x\in\NN$, if $\kappa_x$ is the unique character $\Z_p^\times\to\Z_p^\times$ mapping $1+p$ to $\chi(1+p)\cdot (1+p)^{k_x}$ for a finite order character $\chi$. 
When $x$ corresponds to a classical eigenform $f_x$ of weight $k_x\in\NN$, it has integer weight $k_x$ and $\chi$ as above is the $p$-part of the nebentypus of $f_x$. 

As in \cite[Section 6.1]{ColemanMazur}, the eigencurve $\cE_N$ admits a map to a Galois deformation space: in particular, every $\Qp$-point $x$ of $\cE_N$ carries a continuous representation $\rho_x:G_\Q\to\GL_2(\Qp)$. We denote by $\ovl\rho_x^\sms$ the semisimplification of the mod $p$ reduction of any $\Zp$-lattice in $\rho_x$; it is independent of the chosen lattice, and it is invariant as $x$ moves along a connected component of $\cE_N$. If $x$ is defined over a $p$-adic field $L$ and $\ovl\rho_x^\sms$ is absolutely irreducible, then $\rho_x$ itself is defined over $L$ by a result of Carayol. 

\begin{prop}\label{prop:overconv}
Let $x$ be a $\Q_p$-point of $\cE_N$, and assume that:
\begin{enumerate}[label=(\roman*)]
\item $x$ belongs to a non-ordinary connected component of $\cE_N$;
\item $x$ is not of weight 1;
\item $\rho_x\vert_{G_p}$ is absolutely irreducible and not induced.
\end{enumerate}
Then $\rho_x(I_p)$ contains an open subgroup of $\GL_2(\Q_p)$, and $\rho_x$ has property \B.
\end{prop}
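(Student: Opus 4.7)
The plan is to verify that $\rho_v\coloneqq\rho_x\vert_{G_p}$ satisfies the two hypotheses of Proposition \ref{prop:rhop3}, with $K_v=\Q_p$, and then to conclude by Theorem \ref{thm:propsimplyB}. Hypothesis $(i)$ of Proposition \ref{prop:rhop3} — $\rho_x\vert_{G_p}$ absolutely irreducible and not induced — is exactly our assumption $(iii)$. Since $\Q_p$ admits a unique embedding into $\Qp$, hypothesis $(ii)$ reduces to proving that the two Hodge--Tate--Sen weights $k_1,k_2$ of $\rho_x\vert_{G_p}$ satisfy $k_1\ne k_2$ and $k_1\ne -k_2$.

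The crucial input is the $p$-adic analytic description of the Sen weights along a finite-slope component of $\cE_N$. By Kisin's work on overconvergent $p$-adic modular forms and the trianguline structure of the local Galois representations attached to finite-slope eigenforms, for every $\Qp$-point $y$ of a non-ordinary component of $\cE_N$ the Hodge--Tate--Sen weights of $\rho_y\vert_{G_p}$ are $0$ and $\mathrm{wt}(\kappa_y)-1$, where $\mathrm{wt}(\kappa_y)\in\Z_p$ denotes the Sen weight of the weight character (the unique $s\in\Z_p$ such that $\kappa_y(u)=u^s$ for $u$ in some open subgroup of $\Z_p^\times$). At a classical integer-weight point of weight $k$, this returns the familiar pair $0,\,k-1$. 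Consequently the condition $k_1\ne\pm k_2$ collapses to $\mathrm{wt}(\kappa_x)\ne 1$, which is precisely what hypothesis $(ii)$ encodes: ``$x$ not of weight $1$'' rules out both a classical point of weight $1$ and, interpreted through the Sen weight, a general point with $\mathrm{wt}(\kappa_x)=1$.

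With both hypotheses of Proposition \ref{prop:rhop3} verified, $\rho_x(I_p)$ contains an open subgroup of $\GL_2(\Z_p)$, hence is open in $\GL_2(\Q_p)$, and Theorem \ref{thm:propsimplyB} then gives property \B for $\rho_x$. The non-ordinary hypothesis $(i)$ enters twice in the background: it is the finite-slope setting in which Kisin's trianguline description of the universal family on $\cE_N$ is available, and it is also compatible with $(iii)$, since at an ordinary point $\rho_x\vert_{G_p}$ is necessarily reducible and thus $(iii)$ would fail.

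The main obstacle is really citational: one needs a clean reference for the Sen-weight formula at non-classical points of $\cE_N$. At classical integer-weight points it is a direct consequence of Faltings' $p$-adic comparison theorems, but at general points it requires the analytic interpolation of the Sen operator across a trianguline family — a result that follows from, but is not literally stated in, Kisin's framework. This is the only non-formal step of the argument; once granted, the remainder is a direct application of the already-established criteria of Sections \ref{sec:propsrhop} and \ref{sec:fullnessandtwists}.
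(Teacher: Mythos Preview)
Your proposal is correct and follows the same approach as the paper: verify the hypotheses of Proposition \ref{prop:rhop3} (with $K_v=\Q_p$) and conclude via Theorem \ref{thm:propsimplyB}, the only substantive check being that the Hodge--Tate--Sen weights $0$ and $\mathrm{wt}(\kappa_x)-1$ are distinct, which is precisely the ``not of weight $1$'' assumption. One minor remark: you invoke Kisin's trianguline theory for the Sen-weight formula, but this is heavier machinery than needed --- the fact that the weights are $0$ and the weight of $\kappa_x\id^{-1}$ along all of $\cE_N$ (ordinary or not) follows from Sen's theory applied to the family, and the paper states it without restriction to the non-ordinary locus; assumption $(i)$ is indeed redundant with $(iii)$, as you observe.
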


\begin{proof}
We apply Proposition \ref{prop:rhop3} and Theorem \ref{thm:propsimplyB}: it is enough to check the assumption on the Hodge--Tate--Sen weights of $\rho_x\vert_{G_p}$. One of the weights is constantly 0 along the eigencurve, and the other one is the weight of $\kappa_x\id^{-1}$, where $\id:\Z_p^\times\to\Z_p^\times$ is the identity character, hence it is non-zero since $\kappa_x$ is not of weight 1.
\end{proof}

\begin{remark}
It is not clear how many points of a non-ordinary irreducible component of the eigencurve are defined over $\Q_p$. Such components are never finite over the weight space by \cite{HattoriNewton}, and one cannot easily give equations for them.  As pointed out in the Introduction, in future work we plan to remove the condition on the coefficients of our representations to be $\Z_p$, which would render this question irrelevant.
\end{remark}

\subsubsection{Pseudorepresentations on the non-ordinary eigencurve}\label{sec:eigencurve}
We rely on the results of Section \ref{sec:exampledef} to show that condition \textit{(iii)} of Proposition \ref{prop:overconv} is verified almost everywhere on the non-ordinary locus of the eigencurve. 



We equip a rigid analytic space $X$ with the analytic Zariski topology, and we say that a subset of $X$ is discrete if its complement is a dense open.	Note that a discrete set intersects every affinoid in $X$ on a finite number of points.

Let $X$ be a non-ordinary irreducible component of $\cE_N$, $\cO(X)$ the ring of rigid analytic functions on $X$, $\cO^\circ(X)$ the subring of functions of norm bounded by 1, and $t:G_\Q\to\cO^\circ(X)$ be its associated pseudorepresentation. Recall that $\cO^\circ(X)$ is a pro-$p$ local ring by \cite[Lemma 7.2.11]{BellaicheChenevier}, and that points of $X(L)$, $L$ a $p$-adic field, correspond to height 1 prime ideals of $\cO(X\times_{\Q_p}L)$ by \cite[Proposition 1.1]{LoefflerDens2011}.

For $x\in X(\Qp)$, we write $t_x:G_\Q\to\Qp$ for the specialization of $t$ at $x$ and $\rho_x:G_\Q\to\GL_2(\Qp)$ for any representation of trace $t_x$.

Let $\ovl t$ be the reduction of $t$ modulo the maximal ideal of $\cO^\circ(X)$, and let $\ovl t_p=\ovl t\vert_{G_p}$. Let $\ovl\rho:G_\Q\to\GL_2(\F)$ be a continuous representation with trace $\ovl t$, where $\F$ is chosen so as to contain all of the eigenvalues of the matrices in the image of $\ovl\rho$, and set $\ovl\rho_p=\ovl\rho\vert_{G_p}$. Then, by Proposition \ref{prop:serre}, $\ovl\rho_p$ is either absolutely irreducible, or reducible with $\ovl\rho_p^\sms\vert_{I_p}\cong\vareps_p^a\oplus\vareps_p^b$ for $a,b\in\Z$.

\begin{prop}\label{prop:overconvergent}
If $\ovl\rho_p$ is reducible, assume that the corresponding integers $a,b$ satisfy $a\not\equiv b\pmod{\frac{p-1} 2}$. 
Then the set $S$ of points $x\in X(\Qp)$ such that $\rho_x\vert_{G_p}$ is either reducible or induced is discrete. In particular, $\rho_x$ has \B for every $\Q_p$-point $x$ in the complement of a discrete set.
\end{prop}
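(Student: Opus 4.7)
The plan is to apply the deformation-theoretic framework of Sections~\ref{sec:irr} and~\ref{sec:red} to the pseudorepresentation $t\colon G_\Q\to\cO^\circ(X)$. Since $\cO^\circ(X)$ is a pro-$p$ local ring with residue field $\F$ and $t$ reduces to $\ovl t$, the universal property of $R_{\ovl t}$ yields a morphism $R_{\ovl t}\to\cO^\circ(X)$, and composition with the restriction-to-$G_p$ morphism $R_{\ovl t_p}\to R_{\ovl t}$ produces a map $R_{\ovl t_p}\to\cO^\circ(X)$. When $\ovl\rho_p$ is absolutely irreducible (Case~1), so is $\ovl\rho$ by Remark~\ref{rem:serre}; Carayol's theorem then lifts $t$ to a continuous representation $\rho\colon G_\Q\to\GL_2(\cO^\circ(X))$ deforming $\ovl\rho$ (possibly after enlarging $\F$), which by Mazur's universality provides a morphism $R_{\ovl\rho}\to\cO^\circ(X)$.

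Next I would identify $S$ as a closed analytic subset of $X$. In Case~1, residual absolute irreducibility forces every $\rho_x\vert_{G_p}$ to be absolutely irreducible, so $S$ reduces to the induced locus; by Proposition~\ref{prop:induced}(i) this is the pullback to $X$ of $\Spec R_{\ovl\rho}^{\loc-\ind}\subset\Spec R_{\ovl\rho}$. In Case~2 ($\ovl\rho_p$ reducible with $a\not\equiv b\pmod{(p-1)/2}$), Lemma~\ref{lemma:noind} precludes any $\rho_x\vert_{G_p}$ from being induced, so $S$ reduces to the reducibility locus, which by Proposition~\ref{prop:reducible}(i) is the pullback of $\Spec R_{\ovl t_p}^{\red}\subset\Spec R_{\ovl t_p}$. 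In either case $S$ is closed analytic in $X$, and since $X$ is irreducible of dimension~$1$, it is either equal to $X$ or discrete.

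To exclude $S=X$ I would invoke the Zariski-density in $X$ of classical points of weight $k\ge 2$ (Coleman--Mazur, Buzzard). Since $X$ is non-ordinary, each such $x$ has $v_p(U_p(x))>0$, and any reducible $2$-dimensional crystalline $G_p$-representation with distinct Hodge--Tate weights is the direct sum of an unramified and a ramified character, hence corresponds to an ordinary form; this contradicts non-ordinarity of $x$. Thus in Case~2 no classical point of weight $\ge 2$ lies in $S$, and $S$ is discrete. In Case~1, by Lemma~\ref{lemma:breuil}, $\rho_x\vert_{G_p}\cong V_{k_x,a_p\chi(p)^{1/2},\chi^{1/2}}$ fails to be induced exactly when the $T_p$-eigenvalue $a_p$ of the newform underlying $x$ is non-zero, so the existence of such a classical point amounts to the non-vanishing of the Hecke operator $T_p$ along the irreducible component $X$. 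Outside the discrete union of $S$ with the preimage in $X$ of the weight-$1$ locus of weight space, the hypotheses of Proposition~\ref{prop:overconv} are met and $\rho_x$ has property~\B.

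The main obstacle lies in Case~1: ruling out the pathological possibility that every classical specialization on $X$ corresponds to a $p$-supersingular newform with $a_p=0$. Although the induced locus in $\Spec R_{\ovl\rho_p}$ has strictly smaller relative dimension than $\Spec R_{\ovl\rho_p}$ itself, this dimensional fact does not by itself prevent the $1$-dimensional family $X$ from mapping into it. An additional input is needed, such as the Zariski-density of non-CM, non-supersingular classical points on any irreducible component of $\cE_N$, or a direct non-vanishing statement for the global Hecke operator $T_p$ on irreducible components of the eigencurve.
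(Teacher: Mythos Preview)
Your closedness argument for $S$ matches the paper's. The gap you flag in Case~1 is real, and the paper closes it by a slope argument rather than any external density or non-vanishing input. The $U_p$-slope $h$ is locally constant on $X$, while the weight $k$ varies over the Zariski-dense set of classical points. A classical point that is the $p$-stabilization of a form with $a_p=0$ has slope $h=\frac{k-1}{2}$; on any affinoid where $h$ equals a constant $h_0$, this forces $k=2h_0+1$, so such points form a discrete set. The same reasoning disposes of $p$-new classical points (slope $\frac{k-2}{2}$) and of the non-unit $p$-stabilizations of ordinary forms (slope $k-1$). Since classical points are dense but the union of these three exceptional sets is discrete, there exists a classical $x$ avoiding all of them; this $x$ is then the $p$-stabilization of a supersingular eigenform of level prime to $p$ with $a_p\ne 0$, so $\rho_x\vert_{G_p}$ is irreducible and, by Lemma~\ref{lemma:breuil}, not induced. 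Hence $x\notin S$ and $S\ne X(\Qp)$, in both residual cases at once.

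A minor correction to your Case~2 argument: the claim that no classical point of weight $\ge 2$ lies in $S$ is too strong, since a $p$-new Steinberg point on $X$ can have reducible (semistable, non-crystalline) local representation and positive slope. Your conclusion $S\ne X$ still follows once you note that $p$-old classical points are already dense, but the paper's slope argument sidesteps this subtlety by handling all exceptional classical loci uniformly, without separating the two residual cases.
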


\noindent The proposition is obviously false if $X$ is ordinary, since then $\rho_x\vert_{G_p}$ is reducible for every $x\in X(\Qp)$. 

\begin{proof}
We first show that $S$ is Zariski-closed in $X$. 
Assume first that $\ovl\rho_p$ is (absolutely) irreducible, so that $t$ is the trace of a representation $\rho:G_\Q\to\GL_2(\cO^\circ(X))$ by \cite[Théorème 1]{Nyssen1996}. Let $R_{\ovl\rho}\to\cO^\circ(X)$ be the map provided by the universal property of $R_{\ovl\rho}$. The locus of $x\in X(\Qp)$ where $\rho_x\vert_{G_p}$ is reducible is empty. On the other hand, by Proposition \ref{prop:induced}, $\rho_x\vert_{G_p}$ is induced if and only if $x$ is in the vanishing locus of $\ker(\cO^\circ(X)\to\cO^\circ(X)\widehat{\otimes}_{R_{\ovl\rho}}R_{\ovl\rho}^{\loc,\ind})$.

Assume instead that $\ovl t_p$ is reducible, and let $R_{\ovl t}\to\cO^\circ(X)$ be the map provided by the universal property of $R_{\ovl t}$. Then, because of our assumption on $a,b$ and Remark \ref{rem:ab}, the locus of $x\in X(\Qp)$ where $\rho_x\vert_{G_p}$ is induced is empty, while by Proposition \ref{prop:reducible}, $\rho_x\vert_{G_p}$ is reducible if and only if $x$ is in the vanishing locus of $\ker(\cO^\circ(X)\to\cO^\circ(X)\widehat{\otimes}_{R_{\ovl t}}R_{\ovl t}^{\loc,\red})$.


We show that $S$ discrete. Since $X$ is 1-dimensional and $S$ is Zariski-closed, this amounts to showing that $S\neq X(\Qp)$. The set of classical points is dense in $X$, while the following sets are discrete:
\begin{itemize}
\item the set $S_1$ of classical points corresponding to $p$-new eigenforms;
\item the set $S_2$ of classical points corresponding to $p$-stabilizations of eigenforms with $a_p=0$;
\item the set $S_3$ of classical points corresponding to $p$-stabilizations of ordinary eigenforms.
\end{itemize}
This follows from slope considerations: if $x$ is a classical point of weight $k$ and slope $h$, then $h=\frac{k-2}{2}$ when $x\in S_1$, $h=\frac{k-1}{2}$ when $x\in S_2$, and $h=k-1$ when $x\in S_3$, and such conditions identify a discrete locus since the slope is locally constant on $X$. In particular, there exists a classical point $x\in X(\Qp)$ that is not in $S_1\cup S_2\cup S_3$, hence satisfies the conditions of Proposition \ref{prop:modhasB1}. The corresponding eigenform is the $p$-stabilization of a supersingular eigenform of level prime to $p$, with $a_p\ne 0$. In particular, $\rho_x\vert_{G_p}$ is irreducible and non-induced by Lemma \ref{lemma:breuil}, so that $x\notin S$.

	
The conclusion now follows from Proposition \ref{prop:overconv}, observing that the set of weight 1 points of $X$ is also discrete, since $X$ is locally finite over the weight space.
\end{proof}

\appendix
\renewcommand{\thetheorem}{\thesubsection.\arabic{theorem}}
\section{$p$-adic Lie groups and Sen's theorem}\label{sect:lie}
\subsection{Basics on $p$-adic Lie groups}

We start this section by recalling some standard results on $p$-adic Lie groups, for which we refer to \cite{AnalyticpropBook} and \cite{Probst2008}.

For a topological group $G$, let $G^p$ denote the subgroup of $G$ generated by $p$-th powers. Given two subgroups $H_1,H_2$ of $G$, we write $[H_1,H_2]$ for their group of commutators. 

As in \cite[Definition 1.15]{AnalyticpropBook}, we define the lower $p$-series of a topological group $G$ as
 \[G_0=G\supseteq G_1=\overline{G^p[G_0,G]}\supseteq \dots\supseteq G_n\supseteq G_{n+1}=\overline{G^p[G_n,G]}\supseteq\ldots \]

\begin{definition}[{\cite[Definition 2.1]{AnalyticpropBook}}] A pro-$p$-group $G$ (not necessarily finitely generated) is \textup{powerful} if
$p$ is odd and $G/\overline{G^p}$ is abelian, or if $p = 2$ and $G/\overline{G^4}$ is abelian.
\end{definition}

For a finitely generated powerful group $G$, \cite[Theorem 2.7]{AnalyticpropBook} gives that
$$G^{p^n}=\{g^{p^n}\ |\ g\in G\},$$
and that the lower $p$-series can be simplified to
\[ G_0=G\supseteq G_1=G^p\supseteq \dots\supseteq G_n=G^{p^n}\supseteq \ldots \]

 \begin{definition}[{\cite[Definition 4.1]{AnalyticpropBook}}]
A finitely generated pro-$p$-group $G$ is \textup{uniform} if $G/{\overline{G}^p}$ is abelian, and all successive quotients $G_n/G_{n+1}$ have the same size.
\end{definition}


We refer to \cite[Definition 8.14]{AnalyticpropBook} for the definition of an \textit{analytic pro-$p$ group}. Following \cite{Probst2008,Sen1972}, we prefer the terminology \textit{$p$-adic Lie group} in this text.

\begin{theorem}[{\cite[Theorem 8.18]{AnalyticpropBook}}]\label{thm:Liepowerful} A topological group $G$ has the structure of a $p$-adic Lie group if and
only if $G$ has an open subgroup that is a powerful finitely generated pro-$p$ group.
\end{theorem}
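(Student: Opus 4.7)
\medskip

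The plan is to prove the two implications separately, as each requires quite different machinery.

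For the \emph{if} direction, suppose $G$ contains an open subgroup $H$ that is a powerful finitely generated pro-$p$ group. Since $p$-adic analyticity is a local property (it need only be checked on one open neighborhood of the identity, then translated by group multiplication), it suffices to place a $p$-adic analytic structure on $H$. First, I would pass from $H$ to a uniform open subgroup $U\subseteq H$: for $n$ large enough the subgroup $U=H^{p^n}=\{h^{p^n}\mid h\in H\}$ is open in $H$ and is uniform (this is the standard argument from \cite[Ch.~4]{AnalyticpropBook}, using that powerfulness is preserved by taking $p$-power subgroups and that the successive quotients in the lower $p$-series of a powerful group eventually stabilize in size). Given a uniform $U$ of dimension $d$ with a minimal topological generating set $u_1,\ldots,u_d$, the map
\[
\ZZ_p^d\longrightarrow U,\qquad (\lambda_1,\ldots,\lambda_d)\longmapsto u_1^{\lambda_1}\cdots u_d^{\lambda_d}
\]
is a homeomorphism (coordinates of the second kind). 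One then has to show that the multiplication pulls back to a $p$-adic analytic function on $\ZZ_p^d\times\ZZ_p^d$; this is where the Baker--Campbell--Hausdorff formula enters, and uniformity is exactly what guarantees that the BCH series converges on the full unit polydisk.

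For the \emph{only if} direction, suppose $G$ is a $p$-adic Lie group of dimension $d$. Since $G$ is totally disconnected and locally compact, the identity admits a neighborhood basis of compact open subgroups; I would pick one small enough to sit inside a chart, giving a compact open subgroup $V\subseteq G$ identified analytically with $\ZZ_p^d$ (after possibly shrinking). The Lie algebra $\mathfrak{g}$ of $G$ is the tangent space at the identity, and the exponential map $\exp\colon\mathfrak{g}\supseteq\Omega\to G$ is a local analytic isomorphism from a neighborhood $\Omega$ of $0$ onto a neighborhood of $e$. The heart of the argument is to choose a $\ZZ_p$-lattice $L\subset\mathfrak{g}$ contained in $\Omega$ and sufficiently deep so that: (i) the BCH series converges absolutely on $L\times L$; (ii) $\exp(L)$ is a subgroup of $G$; (iii) the lattice is stable under the bracket $[L,L]\subseteq pL$ (and $[L,L]\subseteq 4L$ when $p=2$). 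Property (iii) forces $\exp(L)/\exp(L)^p$ to be abelian, so $\exp(L)$ is powerful; and $\exp(L)\cong L\cong\ZZ_p^d$ as sets is finitely generated as a pro-$p$ group.

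The main obstacle is the careful quantitative control in the \emph{only if} direction: one must use the nonarchimedean BCH estimates (which, for $p$ odd, give $\log\exp(x)\exp(y)=x+y+\tfrac12[x,y]+\ldots$ with each term involving denominators at most $p^{\lfloor k/(p-1)\rfloor}$ in degree $k$) to show that scaling the lattice by a sufficiently high power of $p$ simultaneously achieves convergence of BCH, stability under multiplication and inversion, and the bracket condition $[L,L]\subseteq pL$. Once $L$ is chosen to meet all three conditions, the uniform subgroup $\exp(L)$ is automatically open in $G$, since $\exp$ is a local analytic isomorphism at $0$. Combining this with the \emph{if} direction one recovers the usual consequence, exploited throughout Section~\ref{subsect:sen}, that every compact $p$-adic Lie group contains an open uniform pro-$p$ subgroup whose lower $p$-series has graded pieces of constant size $p^{\dim G}$.
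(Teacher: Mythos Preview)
The paper does not give its own proof of this theorem; it is quoted verbatim as \cite[Theorem 8.18]{AnalyticpropBook} and used as a black box. Your outline is a faithful sketch of the standard argument in that reference (passing from powerful to uniform via the lower $p$-series, then using second-kind coordinates and BCH for one direction; choosing a sufficiently deep $\ZZ_p$-lattice $L$ in the Lie algebra with $[L,L]\subseteq pL$ and exponentiating for the other), so there is nothing to compare against.
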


\begin{theorem}[{\cite[Theorem 8.32]
{AnalyticpropBook}}]\label{teo:Lieuniform} \label{teo:pLie-characterization} Let $G$ be a topological group. Then $G$ is a $p$-adic Lie group
if and only if $G$ contains an open subgroup which is a uniform pro-$p$ group.
\end{theorem}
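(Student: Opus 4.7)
The plan is to prove the two implications separately, using Theorem \ref{thm:Liepowerful} as a bridge: a uniform pro-$p$ group is in particular a finitely generated powerful pro-$p$ group, so the two characterizations differ only in the strength of the open subgroup one produces.

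The $(\Leftarrow)$ direction is essentially immediate. If $G$ contains an open uniform pro-$p$ subgroup $U$, then by definition $U$ is a finitely generated powerful pro-$p$ group, so Theorem \ref{thm:Liepowerful} yields that $G$ is a $p$-adic Lie group.

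For the non-trivial $(\Rightarrow)$ direction, assume $G$ is a $p$-adic Lie group. By Theorem \ref{thm:Liepowerful} there exists an open finitely generated powerful pro-$p$ subgroup $H$ of $G$, and the plan is to extract from $H$ an open uniform subgroup. I would consider the lower $p$-series $H = H_0 \supseteq H_1 \supseteq \cdots$; by \cite[Theorem 2.7]{AnalyticpropBook}, since $H$ is finitely generated and powerful, each $H_i$ is itself powerful and finitely generated, and $H_{i+1}$ coincides with the set $\{h^p : h \in H_i\}$. Let $d_i = d(H_i)$ denote the minimal number of topological generators of $H_i$. The map $x \mapsto x^p$ induces a surjection from $H_i/H_{i+1}$ onto $H_{i+1}/H_{i+2}$ at the level of Frattini quotients, so the sequence $(d_i)$ is non-increasing. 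Being a bounded sequence of non-negative integers, it stabilizes at some value $d$ from an index $n_0$ onward. I would then argue that $H_{n_0}$ is uniform: from that index on, all successive quotients $H_i/H_{i+1}$ are elementary abelian of the common rank $d$, which together with powerfulness is precisely the uniformity condition. Since $H_{n_0}$ is open in $H$ and $H$ is open in $G$, the group $H_{n_0}$ is the required open uniform pro-$p$ subgroup.

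The main obstacle is the step from stabilization of $(d_i)$ to genuine uniformity. One must rule out ``collapse'' of the lower $p$-series by torsion, i.e. show that the $p$-th power map $H_i \to H_{i+1}$ is bijective for all $i \geq n_0$, or equivalently that $H_{n_0}$ is torsion-free. My approach would be to invoke the standard fact that the torsion subgroup $T(H)$ of a finitely generated powerful pro-$p$ group is finite and normal in $H$; then, since $\bigcap_i H_i = \{e\}$, one has $H_n \cap T(H) = \{e\}$ for all sufficiently large $n$, so after possibly enlarging $n_0$ the group $H_{n_0}$ is torsion-free and the constancy of $d_i$ for $i \geq n_0$ yields uniformity.
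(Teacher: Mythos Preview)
The paper does not give a proof of this statement; it is simply recorded with a citation to \cite[Theorem 8.32]{AnalyticpropBook} and used as a black box. Your argument is essentially the standard one found in that reference and is correct.

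One small comment on the structure of your $(\Rightarrow)$ direction: once you invoke the fact that the torsion elements of a finitely generated powerful pro-$p$ group form a finite characteristic subgroup, the earlier discussion of the stabilization of the sequence $(d_i)$ becomes logically redundant. Indeed, for large $n$ the group $H_n$ is finitely generated, powerful (being a term of the lower $p$-series of the powerful group $H$), and torsion-free; and ``finitely generated, powerful, torsion-free'' is already equivalent to ``uniform'' in \cite{AnalyticpropBook}. So your first paragraph on $(d_i)$ serves as motivation rather than as an essential step, and you could streamline the argument by going straight to the torsion consideration. You should also make explicit, as you implicitly use, that the lower $p$-series of $H_{n_0}$ is just the tail $H_{n_0}\supseteq H_{n_0+1}\supseteq\cdots$ of the lower $p$-series of $H$; this holds because $H_{i+1}=H_i^p$ for powerful $H$.
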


\begin{theorem}\label{thm:openGL}
A topological group $G$ is a $p$-adic Lie group if and only if it contains an open subgroup isomorphic (as a $p$-adic Lie group) to a closed subgroup of $\GL_d(\Z_p)$ for some $d\ge 0$.
\end{theorem}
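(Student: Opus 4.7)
The plan is to handle the two implications separately, with the forward direction being the substantive one.

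For the reverse implication, I would assume $G$ contains an open subgroup $H$ isomorphic, as a $p$-adic Lie group, to a closed subgroup of $\GL_d(\Z_p)$, and combine three facts. First, $\GL_d(\Z_p)$ is itself a $p$-adic Lie group, since it is an open subset of $\Mat_d(\Z_p)\cong\Z_p^{d^2}$ with polynomial multiplication and analytic inversion. Second, any closed subgroup $H'\subset\GL_d(\Z_p)$ inherits a $p$-adic Lie structure: the intersection $H'\cap\wtl\Gamma_d(p)$ is an open subgroup of $H'$ that is a closed subgroup of the uniform pro-$p$ group $\wtl\Gamma_d(p)$, hence a pro-$p$ group of finite rank, so that $H'$ is $p$-adic analytic by Theorem~\ref{thm:Liepowerful}. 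Third, being a $p$-adic Lie group is a local property of $G$: one can transport the analytic structure on $H$ via left translations to the cosets of $H$ in $G$, producing a $p$-adic Lie structure on all of $G$ compatible with its topology.

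For the forward implication, let $G$ be a $p$-adic Lie group. The first step is to pass to a compact open subgroup of $G$ — which exists because $G$ is locally isomorphic to $\Q_p^d$, hence locally compact — and then, via Theorem~\ref{teo:Lieuniform}, reduce to the case where this subgroup is a uniform pro-$p$ group $G_0$; it suffices to embed $G_0$ as a closed subgroup of some $\GL_n(\Z_p)$. The Lazard correspondence attaches to $G_0$ a $\Z_p$-Lie algebra $L=\log(G_0)$, free of finite rank over $\Z_p$. Applying Ado's theorem to the finite-dimensional $\Q_p$-Lie algebra $L\otimes_{\Z_p}\Q_p$ produces a faithful representation $\varphi\colon L\otimes_{\Z_p}\Q_p\to\fgl_n(\Q_p)$ for some $n$. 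After rescaling $\varphi$ by a sufficiently high power of $p$, one forces $\varphi(L)\subseteq p^N\Mat_n(\Z_p)$ with $N$ large enough that the $p$-adic exponential series converges on $\varphi(L)$ and lands inside the congruence subgroup $\wtl\Gamma_n(p^N)$. Composing with $\exp$ yields a continuous injective group homomorphism $G_0\to\GL_n(\Z_p)$; compactness of $G_0$ together with the Hausdorff property of $\GL_n(\Z_p)$ force the image to be closed and the map to be a topological embedding, while the mutual inverseness of $\exp$ and $\log$ upgrades this to an isomorphism of $p$-adic Lie groups onto a closed subgroup.

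The main obstacle I anticipate is the descent from the $\Q_p$-valued Ado theorem to an embedding into $\GL_n(\Z_p)$: the scaling trick above bridges the integrality gap, but one must carefully control the convergence of $\exp$ on $\varphi(L)$ and verify that the resulting analytic map (not merely a continuous injection) identifies $G_0$ with a $p$-adic Lie subgroup of $\GL_n(\Z_p)$. An alternative route, avoiding Ado's theorem altogether, would proceed by induction on the dimension of the centre of $L$ using the adjoint representation $L\to\fgl(L)\cong\fgl_d(\Z_p)$, at the cost of a more involved construction.
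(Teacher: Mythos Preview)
Your proposal is correct and follows the same two-implication outline as the paper. The paper's forward direction is terser: after extracting an open uniform pro-$p$ subgroup $H$ via Theorem~\ref{teo:Lieuniform}, it simply cites \cite[Theorem~7.19]{AnalyticpropBook} for a faithful representation $H\to\GL_d(\Z_p)$ and then uses compactness plus Hausdorffness, exactly as you do, to conclude that the map is a homeomorphism onto a closed subgroup. Your Lazard--Ado--exponential argument amounts to a proof of that cited result (and the paper itself points to the Ado route in the discussion preceding Lemma~\ref{lem:berger}), so the difference is one of black-boxing versus unpacking. One point of care: literally replacing $\varphi$ by $p^k\varphi$ destroys the Lie-homomorphism property when $L$ is non-abelian, since $[p^kX,p^kY]=p^{2k}[X,Y]\ne p^k[X,Y]$; the correct move is to restrict $\varphi$ to the $\Z_p$-Lie sublattice $p^kL$ (equivalently, pass to the open subgroup $G_0^{p^k}\subset G_0$), whose image under $\varphi$ then lies in $p^N\Mat_n(\Z_p)$ for $k$ large and on which the matrix exponential converges. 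Since the statement only asks for an open subgroup of $G$ to embed, this adjustment is harmless, and you already flag the integrality descent as the delicate step.
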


\begin{proof}
We sketch a proof as we could not find an exact reference. If $G$ is a topological group containing an open subgroup $H$ as in the statement, then $H$ contains an open uniform pro-$p$ subgroup, hence $G$ is a $p$-adic Lie group by Theorem \ref{teo:Lieuniform}. Vice versa, if $G$ is a $p$-adic Lie group, then it contains an open uniform pro-$p$ subgroup $H$, and by \cite[Theorem 7.19]{AnalyticpropBook} $H$ admits a faithful representation $\iota_d:H\to\GL_d(\Z_p)$ for some $d\ge 0$. Note that since both $H$ and $\GL_d(\Z_p)$ are equipped with the profinite topology, $\iota_d$ is automatically continuous and closed, i.e. it is a homeomorphism between $H$ and a closed subgroup of $\GL_d(\Z_p)$.
\end{proof}

We stick to the case of odd $p$ in the rest of the section, since this is the running assumption in the rest of the paper.

Let $K$ be a $p$-adic field with valuation ring $\cO$.
The following is a standard fact.

\begin{lemma}\label{lemma:lie}
Every compact subgroup $G$ of $\GL_d(K)$ (i.e., every closed subgroup of $\GL_d(\cO))$ is a $p$-adic Lie group. 
\end{lemma}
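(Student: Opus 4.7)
The plan is to realise $G$ as a closed subgroup of $\GL_{dn}(\Z_p)$ with $n=[K:\Q_p]$ and then invoke Theorem \ref{thm:openGL}. The key point is that $\cO$ is free of rank $n$ as a $\Z_p$-module, so restriction of scalars along $\Z_p\hookrightarrow\cO$ identifies $\cO^d$ with $\Z_p^{dn}$ as a topological $\Z_p$-module.

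First I would fix a $\Z_p$-basis of $\cO$ and thereby produce an injective continuous ring homomorphism $\iota\colon\Mat_d(\cO)\hookrightarrow\Mat_{dn}(\Z_p)$, sending an $\cO$-linear endomorphism of $\cO^d$ to its underlying $\Z_p$-linear endomorphism. The image of $\iota$ is the centraliser in $\Mat_{dn}(\Z_p)$ of the $\Z_p$-linear action of $\cO$ on $\Z_p^{dn}$, hence a closed $\Z_p$-subalgebra, so $\iota$ is a closed topological embedding.

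Next I would check that $\iota$ restricts to a closed embedding $\GL_d(\cO)\hookrightarrow\GL_{dn}(\Z_p)$: the determinant on $\Mat_{dn}(\Z_p)$ pulls back along $\iota$ to $N_{K/\Q_p}\circ\det_{\cO}$, and since an element of $\cO$ lies in $\cO^\times$ if and only if its norm lies in $\Z_p^\times$, one gets $\GL_d(\cO)=\iota^{-1}(\GL_{dn}(\Z_p))$. Any closed subgroup $G\subseteq\GL_d(\cO)$ is therefore closed in $\GL_{dn}(\Z_p)$; taking $G$ itself as the required open subgroup of $G$, Theorem \ref{thm:openGL} then yields that $G$ is a $p$-adic Lie group. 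The only step that requires some care is the verification that $\iota^{-1}(\GL_{dn}(\Z_p))$ coincides with $\GL_d(\cO)$, but this is immediate from the norm relation above, so I do not anticipate any serious obstruction: all the substantive content is absorbed into Theorem \ref{thm:openGL} (equivalently, into Lazard's characterisation of compact $p$-adic Lie groups via uniform open pro-$p$ subgroups).
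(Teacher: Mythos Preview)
Your argument is correct and follows essentially the same route as the paper: restriction of scalars along $\Q_p\hookrightarrow K$ (or $\Z_p\hookrightarrow\cO$) to embed $G$ as a closed subgroup of $\GL_{dn}$ over $\Q_p$, followed by an appeal to the fact that closed subgroups of $p$-adic Lie groups are $p$-adic Lie groups. The only cosmetic difference is that the paper works at the level of $K$-vector spaces and uses the compact-to-Hausdorff argument (a continuous injection from a compact space to a Hausdorff space is automatically a closed embedding) to conclude that $\iota(G)$ is closed, which spares you the explicit norm computation.
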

\begin{proof}
If $n=[K\colon\Q_p]$, choosing an isomorphism of $\Q_p$-vector spaces $K^d\cong\Q_p^{nd}$ and letting elements of $\GL_d(K)$ act $\Q_p$-linearly on $K^d$ gives an injective, continuous group homomorphism $\iota\colon\GL_d(K)\into\GL_{nd}(\Q_p)$. 
Since $G$ is compact and $\GL_d(\Q_p)$ is separated, $\iota\vert_G\colon G\to\iota(G)$ is an isomorphism of topological groups, and $\iota(G)$ is closed in the $p$-adic Lie group $\GL_{nd}(\Q_p)$, hence a $p$-adic Lie group itself by \cite[Theorem 9.6]{AnalyticpropBook}.
\end{proof}

The following lemma provides a converse of 
Lemma \ref{lemma:lie}. It is a consequence of 
Ado's theorem on the existence of a faithful linear representation for any finite dimensional Lie algebra over a field of characteristic 0, see e.g. \cite[\S 7, Théorème 3]{BourbakiLieI}. A proof can be found in  \cite[Proposition 4]{Lubotzky1988}.

\begin{lemma}\label{lem:berger} Every $p$-adic compact Lie group $G$ can be embedded in $\GL_d(\ZZ_p)$, for $d$ large enough.
\end{lemma}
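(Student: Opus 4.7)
The plan is to bootstrap from Theorem \ref{thm:openGL}, which already produces a faithful topological embedding of some open subgroup of $G$ into $\GL_d(\Z_p)$, to a faithful embedding of all of $G$ by means of the induced representation construction.

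First I would invoke Theorem \ref{thm:openGL} to obtain an open subgroup $H\subseteq G$ together with a continuous injective homomorphism $\iota\colon H\hookrightarrow\GL_d(\Z_p)$ for some $d\geq 0$. Since $G$ is compact and $H$ is open, the index $n\coloneqq [G:H]$ is finite. I would then form the induced $G$-module $W=\mathrm{Ind}_H^G V$, where $V=\Z_p^d$ is the $H$-module defined by $\iota$. Choosing coset representatives $g_1,\ldots,g_n\in G$ for $G/H$ exhibits $W$ as a free $\Z_p$-module of rank $dn$, and the $G$-action by permutation of cosets, twisted by $\iota$ on each factor, gives a group homomorphism $\rho\colon G\to\GL_{dn}(\Z_p)$.

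The main point to verify is faithfulness of $\rho$. A standard computation with the decomposition $W=\bigoplus_i g_i\otimes V$ shows that an element $g\in G$ acts trivially on $W$ if and only if $g\in\bigcap_{i}g_iHg_i^{-1}$ and each conjugate $g_i^{-1}gg_i$ lies in $\ker(\iota)$; since $\iota$ is injective, this forces $g=1$, so $\rho$ is injective. Continuity is automatic from the continuity of $\iota$ together with the finiteness of $n$, and since $G$ is compact and $\GL_{dn}(\Z_p)$ is Hausdorff, $\rho$ is in fact a closed topological embedding, which is what ``embedding of $p$-adic Lie groups'' amounts to here (a continuous injective homomorphism between $p$-adic Lie groups is automatically analytic).

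I do not anticipate any serious obstacle: the only genuine input is Theorem \ref{thm:openGL}, and passing from an open-subgroup embedding to a full-group embedding via induction is essentially formal. The hard work---constructing a faithful linear representation of a uniform pro-$p$ subgroup, which is the $p$-adic shadow of Ado's theorem---has already been absorbed into Theorem \ref{thm:openGL} via \cite[Theorem 7.19]{AnalyticpropBook}. If one wished to avoid Theorem \ref{thm:openGL}, one could alternatively apply Ado directly to the Lie algebra of a uniform open subgroup to obtain $\iota$, but this would just reproduce the same argument one level down.
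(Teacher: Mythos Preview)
Your argument is correct and is precisely the approach the paper has in mind: the text attributes the lemma to Ado's theorem and cites \cite[Proposition 4]{Lubotzky1988}, while the source contains (commented out) exactly the sketch you give---use Theorem \ref{thm:openGL} to faithfully embed an open subgroup $H$ into $\GL_k(\Z_p)$, then induce from the finite-index $H$ to $G$. Your verification of faithfulness and continuity of the induced representation fills in the details the paper leaves implicit.
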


\begin{definition}\label{def:congsub}
The \textup{(principal) congruence subgroups} of $\GL_d(\ZZ_p)$, respectively $\SL_d(\ZZ_p)$, are the subgroups of the form 
\[ \wtl\Gamma_d(p^n)=\ker(\GL_d(\ZZ_p)\to\GL_d(\ZZ/p^n\ZZ)), \]
respectively
\[ \Gamma_d(p^n)=\ker(\SL_d(\ZZ_p)\to\SL_d(\ZZ/p^n\ZZ))=\wtl\Gamma_d(p^n)\cap \SL_d(\ZZ_p), \]
for $n\in\NN$, where in both cases the map is reduction modulo $p^n$. 
\end{definition}

Since principal congruence subgroups are sufficient to our purposes, we always omit the ``principal’’. In particular, congruence subgroups are normal subgroups.

Following \cite{Sen1972} and \cite[Section 6]{Probst2008}, we say that a filtration 
\begin{equation*}G_0\coloneqq G\supseteq G_1\supseteq\ldots\supseteq G_k\supseteq\ldots\end{equation*}
of a $p$-adic Lie group $G$ is a \textit{Lie filtration} if, for large enough $n$, $G_n$ is an open uniform subgroup of $G$ and $G_n\supseteq G_{n+1}\supseteq\ldots$ is the lower $p$-filtration of $G_n$. 
Then for $n\gg 0$ the successive quotients satisfy  $G_n/G_{n+1}\simeq (\Z/p\Z)^A$, where $A$ is independent on $n$; $A$ is called the \textit{dimension} of $G$.

We give two standard examples of $p$-adic Lie groups. We also refer the reader to the similar discussion in \cite[Section 5.1]{AnalyticpropBook}. Recall that $p$ is an odd prime. 

\begin{ex}\label{ex:congruencetilde}\mbox{ }
\begin{itemize}
\item For every $n\ge 1$, $\wtl\Gamma_d(p^n)^p=\wtl\Gamma_d(p^{n+1})$. Indeed, if $\mathcal{B}=1+\mathcal{A}$ with $\mathcal{A}\in p^n\Mat_d(\Z_p)$, then $\mathcal{B}^p=1+p\mathcal{A}(1+\frac{p-1}{2}\mathcal{A}+\ldots)$ is clearly in $\wtl\Gamma_d(p^{n+1})$, so that $\wtl\Gamma_d(p^n)^p\subset\wtl\Gamma_d(p^{n+1})$. Conversely, if $\mathcal{B}=1+\mathcal{A}$ with $\mathcal{A}\in p^{n+1}\Mat_d(\Z_p)$, then $\mathcal{B}^{1/p}$ admits a Taylor series expansion in $\mathcal{A}/p$, with constant coefficient equal to 1, hence belongs to $\wtl\Gamma_d(p^n)$. Note that $\wtl\Gamma_d(p^n)^p=\wtl\Gamma_d(p^{n+1})$ is obviously closed in $\wtl\Gamma_d(p^n)$.
\item For every $n\ge 1$, $\wtl\Gamma_d(p^n)$ is a uniform pro-$p$ group, hence a compact $p$-adic Lie group. Indeed, it is clearly pro-$p$, and $\wtl\Gamma_d(p^{n})/\wtl\Gamma_d(p^n)^p=\wtl\Gamma_d(p^n)/\wtl\Gamma_d(p^{n+1})$ is simply the additive group $\Mat_d(\Z/p\Z)$ (just map $1+p^n\mathcal{A}\in\wtl\Gamma_d(p^{n})$ to $\mathcal{A}$ mod $p$), so that $\wtl\Gamma_d(p^n)$ is a powerful group. Since $\wtl\Gamma_d(p^{n})$ is a finite index subgroup of the finitely presented group $\GL_d(\Z_p)$, it is itself finitely presented. In particular, it is finitely generated, so that its lower $p$-series is 
\[ \wtl\Gamma_d(p^n)\supseteq\wtl\Gamma_d(p^{n+1})\supseteq\ldots\supseteq\wtl\Gamma_d(p^{n+k})\supseteq\ldots \]
by the previous point. Since $\wtl\Gamma_d(p^{n+k})/\wtl\Gamma_d(p^{n+k+1})$ is the additive group $\Mat_d(\Z/p\Z)$ for every $k$, $\wtl\Gamma_d(p^{n})$ is uniform.
\end{itemize}
\end{ex}

\begin{ex}\label{ex:congruence}\mbox{ }
\begin{itemize}
\item For every $n\ge 1$, $\Gamma_d(p^n)^p=\Gamma_d(p^{n+1})$. This follows by taking intersections with $\SL_d(\Z_p)$ in the first point of Example \ref{ex:congruencetilde}. As before, $\Gamma_d(p^n)^p=\Gamma_d(p^{n+1})$ is obviously closed in $\Gamma_d(p^n)$.
\item For every $n\ge 1$, $\Gamma_d(p^n)$ is a uniform pro-$p$ group, hence a compact $p$-adic Lie group. Indeed, it is clearly pro-$p$, and $\Gamma_d(p^{n})/\Gamma_d(p^n)^p=\Gamma_d(p^n)/\Gamma_d(p^{n+1})$ is simply the additive group $\Mat^0_d(\Z/p\Z)$ of trace 0 matrices with coefficients in $\Z/p\Z$ (again, one simply maps $1+p^n\mathcal{A}\in\Gamma_d(p^{n})$ to $\mathcal{A}$ mod $p$), so that $\Gamma_d(p^n)$ is a powerful group. Since $\Gamma_d(p^{n})$ is a finite index subgroup of the finitely presented group $\SL_d(\Z_p)$, it is itself finitely presented. In particular, it is finitely generated, so that its lower $p$-series is 
\[ \Gamma_d(p^n)\supseteq\Gamma_d(p^{n+1})\supseteq\ldots\supseteq\Gamma_d(p^{n+k})\supseteq\ldots \]
by the previous point. For every $k$, $\Gamma_d(p^{n+k})/\Gamma_d(p^{n+k+1})$ is the additive group $\Mat_d^0(\Z/p\Z)$ of trace 0 matrices with coefficients in $\Z/p\Z$, hence $\Gamma_d(p^{n})$ is uniform.
\end{itemize}
\end{ex}

The following lemma is an obvious consequence of Example \ref{ex:congruence}.

\begin{lemma}\label{lemma:filZp}
Let $G$ be a closed subgroup of $\GL_d(\ZZ_p)$. Assume that there exists a positive integer $n$ such that $G$ contains $\wtl\Gamma_d(p^n)$ as a subgroup of finite index. Then
\begin{equation*} G\supseteq\wtl\Gamma_d(p^{n})\supseteq\ldots\supseteq\wtl\Gamma_d(p^{n+k})\supset\ldots \end{equation*}
is a Lie filtration on $G$.
\end{lemma}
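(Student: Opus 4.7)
The plan is to unpack the definition of a Lie filtration and verify the two required properties by direct reference to Example \ref{ex:congruencetilde}. Recall that a filtration $G_0 = G \supseteq G_1 \supseteq G_2 \supseteq \ldots$ is a Lie filtration when, for sufficiently large index, each $G_n$ is an open uniform subgroup of $G$ and $G_n \supseteq G_{n+1} \supseteq \ldots$ coincides with the lower $p$-series of $G_n$. So the task is to verify these two features for the chain
\[
G \supseteq \wtl\Gamma_d(p^n) \supseteq \wtl\Gamma_d(p^{n+1}) \supseteq \wtl\Gamma_d(p^{n+2}) \supseteq \ldots
\]
starting from index $1$ (identifying $G_k$ with $\wtl\Gamma_d(p^{n+k-1})$ for $k\ge 1$).

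First I would check openness. Since $\wtl\Gamma_d(p^{n+k-1})$ is the kernel of the reduction map $\GL_d(\Z_p) \to \GL_d(\Z/p^{n+k-1}\Z)$, it is open in $\GL_d(\Z_p)$, hence open in the closed subgroup $G$ for the subspace topology. Thus every term of the tail of the filtration is open in $G$.

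Next I would establish uniformity and the lower $p$-series condition. Both of these have essentially been done in Example \ref{ex:congruencetilde}: for every $m \ge 1$, the group $\wtl\Gamma_d(p^m)$ is a uniform pro-$p$ group, and its lower $p$-series is precisely
\[
\wtl\Gamma_d(p^m) \supseteq \wtl\Gamma_d(p^{m+1}) \supseteq \wtl\Gamma_d(p^{m+2}) \supseteq \ldots
\]
Applying this with $m = n$, the tail of our filtration from $\wtl\Gamma_d(p^n)$ onwards is exactly the lower $p$-series of $\wtl\Gamma_d(p^n)$, which is itself an open uniform pro-$p$ subgroup of $G$. This verifies the definition of a Lie filtration.

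There is no real obstacle here: the entire content of the lemma is the bookkeeping fact that the subspace topology on $G \subseteq \GL_d(\Z_p)$ makes the standard congruence filtration on $\GL_d(\Z_p)$ into a Lie filtration on $G$, once one knows (from Example \ref{ex:congruencetilde}) that the congruence subgroups of $\GL_d(\Z_p)$ are themselves uniform with the expected lower $p$-series. The finite-index hypothesis on $\wtl\Gamma_d(p^n)$ in $G$ is not strictly necessary beyond ensuring that $\wtl\Gamma_d(p^n)$ is open in $G$, which already follows from $\wtl\Gamma_d(p^n)$ being open in $\GL_d(\Z_p)$.
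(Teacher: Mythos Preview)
Your proposal is correct and follows exactly the approach the paper intends: the paper gives no proof beyond stating that the lemma is an obvious consequence of the congruence-subgroup example, and you have simply spelled out that deduction. Your closing remark that the finite-index hypothesis is redundant is also accurate, since $\wtl\Gamma_d(p^n)$ is open in the compact group $\GL_d(\Z_p)$ and hence automatically of finite index in any intermediate closed subgroup $G$.
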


\subsection{Ramification groups and Sen's Theorem}\label{appendix:ramification}

We recall some standard results on upper and lower ramification groups, following \cite[Section IV.3]{Serre1968}. Given a finite Galois extension  $L/F$ in $\Qbar_p$, and for an integer $i\geq -1$, let $G[i]$ denote the $i$-th ramification subgroup of $\Gal(L/F)$ \cite[Ch. IV, \S 1]{Serre1968}: then $G[-1]=G=\Gal(L/F)$; and for $i\geq  0$  
\[G[i]=\{\sigma\in G\ |\ \sigma(x)\equiv x\mod \mathfrak{P}^{i+1}, \hbox{ for every } x\in\calo_L\},\] 
where $\mathfrak{P}$ is the maximal ideal in $\calo_L$. For $u\in [-1,\infty)$, we set $G[u]= G[\lceil u\rceil ] $, where $\lceil u \rceil$ is the smallest integer $\geq u$. As in \cite[Ch. IV, \S3]{Serre1968}, we define the function $\varphi_{L/F}\colon[0,+\infty)\to[0,+\infty)$ and its inverse $\psi_{L/F}$. They  allow to define the \textit{upper numbering} of the ramification groups: \[G(u)=G[\psi_{L/F}(u)], \quad\hbox{ that is } G[u]=G(\varphi_{L/F}(u)).\]
\\
If $m\le u\le m+1$ for   where $m$ is a positive integer, then
\begin{equation}\label{eq:phi} \varphi_{L/F}(u)=\frac{1}{g_0}(g_1+\ldots+g_m+(u-m)g_{m+1})\end{equation}
where $g_i=|G[i]|$ for $i\in\NN$.\\ 
Let $L/K/F$ be a tower of Galois extensions. By definition, the lower notation is preserved by passing to subgroups, that is for every $u\in [-1,\infty)$
\begin{equation*}\label{eq:subgroups}  \Gal(L/K)[u]=\Gal(L/F)[u]\cap \Gal(L/K).\end{equation*}
On the other hand, from \cite[Prop. 14, Section IV.3]{Serre1968} we see that the upper notation is preserved by passing to quotients, that is 
\begin{equation}\label{eq:quotients} \Gal(K/F)(u)= (\Gal(L/F)(u)\Gal(L/K))/\Gal(L/K). \end{equation}
The latter is a consequence of the composition formulas 
\begin{equation}\label{comp} \varphi_{L/F}=\varphi_{K/F}\ccirc\varphi_{L/K}\text{ and }\psi_{L/F}=\psi_{L/K}\ccirc\psi_{K/F} \end{equation}
from \cite[Prop. 15, Section IV.3]{Serre1968}. \\
Thanks to  \eqref{eq:quotients}, the upper notation makes sense for infinite Galois extensions, i.e. one can define for any Galois extension $L/F$ in $\Qbar_p$ and $u\in[-1,\infty)$, a ramification group
\[\Gal(L/F)(u)=\varprojlim_{L'}\Gal(L'/F)(u)\]
where $L'$ varies in the set of finite Galois extensions of $F$ contained in $L$.

Assume now that $L/F$ is a Galois extension in $\Qbar_p$, such that $G=\Gal(L/F)$ is a $p$-adic Lie group. Then $G$ is equipped in a natural way with: 
\begin{itemize}
\item a Lie filtration of $G$, obtained by considering its $p$-adic analytic structure;
\item the filtration given by the upper ramification groups.
\end{itemize}
When $L/K$ is totally ramified, Serre \cite{Serre1967} asked about the relation between the two filtrations; the answer was provided by the following:

\begin{theorem}[Sen's Theorem \cite{Sen1972}]\label{teo:sen} Let $\QQ_p\subseteq F \subseteq L\subseteq \Qbar_p$ be a tower of fields such that $L/F$ is totally ramified and  $F/\QQ_p$ has a  finite ramification index $e = e(F/\QQ_p)$.
Assume that the Galois group $G = Gal(L/F)$ is a $p$-adic Lie group, with $dim(G) > 0$.
Let 
$$\ldots G_n\subseteq G_{n-1}\subseteq \ldots\subseteq G=\Gal(L/F)$$ be a filtration on $G$ that is uniformly equivalent in scaling 1 to some Lie
filtration. The filtration $(G_n)_n$ is then uniformly equivalent in scaling $e$ to the ramification filtration
$(G(n))_n$. 
In other words, there exists a constant $c > 0$ such that
$$G(ne + c) \subseteq  G_n\subseteq  G(ne - c)$$
for all $n$, with $G(r) = G$ for $r < 0$.
\end{theorem}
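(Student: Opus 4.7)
The plan is to follow Sen's original argument in \cite{Sen1972}. First I would reduce to the uniform case. By Theorem \ref{teo:Lieuniform}, $G$ contains an open uniform pro-$p$ subgroup $H$ of finite index. Since modifying $(G_n)$ on a finite initial segment or by a finite index only changes the constant $c$, after replacing $G$ by $H$ I may assume that $G$ is uniform of dimension $d$ and that $G_n = G^{p^n}$ is its lower $p$-series, with $|G_n/G_{n+1}| = p^d$ for every $n$. Setting $L_n = L^{G_n}$, the hypothesis that $L/F$ is totally ramified forces each $L_n/F$ to be totally ramified as well, so $e(L_n/F) = [L_n : F] = [G : G_n]$ which grows like $p^{nd}$, and each step $L_{n+1}/L_n$ is an elementary abelian $p$-extension of degree $p^d$.

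The core of the proof is a ramification analysis of the tower $F \subseteq L_0 \subseteq L_1 \subseteq \dots \subseteq L$. By local class field theory, the upper ramification filtration on $\Gal(L_{n+1}/L_n)$ coincides with the image under the local reciprocity map of the unit filtration $(1 + \fm_{L_n}^j)_j$ of $\cO_{L_n}^\times$. The classical computation based on the convergence of the $p$-adic exponential on $\fm_{L_n}^j$ for $j > e(L_n/\QQ_p)/(p-1)$, combined with the fact that on this range the $p$-th power map raises the $\fm$-valuation by exactly $e(L_n/\QQ_p)$, localizes the upper ramification jumps of $\Gal(L_{n+1}/L_n)$ near $e(L_n/\QQ_p)$ with a bounded error. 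Using the composition formula \eqref{comp} to transport these jumps through $\varphi_{L_n/F}$ into the filtration on $\Gal(L/F)$, and using $e(L_n/\QQ_p) = e \cdot [G : G_n]$, one finds that the jump between $G_n$ and $G_{n+1}$ sits at upper index approximately $ne$, with an error uniformly bounded in $n$. Choosing $c$ large enough to absorb all such bounded errors yields both inclusions $G_n \subseteq G(ne-c)$ and $G(ne+c) \subseteq G_n$.

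The main obstacle is precisely this uniform control of the error as $n \to \infty$. It relies on two features of the uniform pro-$p$ setting: first, the constant rank $d$ of each successive quotient $G_n/G_{n+1}$, which ensures that the local class field theory analysis is essentially ``the same'' at every stage, only translated by the growing ramification index; and second, the Hasse--Arf theorem, which forces the upper ramification jumps in each abelian subquotient to occur at integers, so that their images under successive $\varphi$-maps can be tracked cleanly. The remaining work — tracking the constants explicitly through iterated Herbrand transitions — is essentially routine once the conceptual framework is in place, but it is where all the delicacy of the argument lies.
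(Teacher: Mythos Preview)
The paper does not prove Sen's theorem: it is quoted as a result from \cite{Sen1972}, with the remark that the finiteness hypothesis on $F/\QQ_p$ is relaxed in \cite{HajirMaire2002,Probst2008}. There is therefore no proof in the paper to compare your proposal against.

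As for the sketch itself, there is a genuine gap at the heart of your local class field theory step. You assert that the upper ramification jumps of $\Gal(L_{n+1}/L_n)$ are ``localized near $e(L_n/\QQ_p)$ with a bounded error'', appealing only to the structure of the unit filtration of $\cO_{L_n}^\times$ and the behavior of the $p$-th power map on principal units. But local class field theory identifies $\Gal(L_{n+1}/L_n)$ with a \emph{specific} quotient $L_n^\times/N$ of index $p^d$, and the ramification jumps are determined by how the unit filtration $(U_{L_n}^{(j)})_j$ meets the norm subgroup $N$. Knowing the general structure of $U_{L_n}^{(j)}/(U_{L_n}^{(j)})^p$ does not by itself tell you where $N$ sits: an arbitrary $p$-elementary abelian extension of $L_n$ of degree $p^d$ can have its jump essentially anywhere in the allowed range. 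You have not used any property of the particular extension $L_{n+1}/L_n$ beyond its degree, so the claimed localization of the jump is unsupported. Identifying that norm subgroup, or otherwise pinning down the jump, is precisely the content of Sen's theorem, not a preliminary to it.

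Sen's actual argument does not proceed step-by-step through class field theory on the abelian layers. It works directly with the $p$-adic logarithm $\log\colon G\to\fg$ into the Lie algebra, and the key technical input is a comparison between the ramification filtration on $G$ and the lattice filtration $p^n\log(G)$ in $\fg$, obtained by estimating $v_p(\sigma(\pi)/\pi-1)$ in terms of $v_p(\log\sigma)$ for a uniformizer $\pi$. The Hasse--Arf theorem and Herbrand transitions play no role in bounding the jumps; the uniform control comes from the Lie-theoretic estimate, not from an inductive accumulation of one-step class field theory computations.
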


\begin{remark}
In the original version by Sen it is assumed that the base extension $F/\QQ_p$ is finite. This assumption is shown to be unnecessary in \cite{HajirMaire2002,Probst2008}.
\end{remark}

\section{Big image results}\label{appendix:bigimage}

Let $d\ge 1$ be an integer. 
We denote by $\fgl_d$ and $\fsl_d$ the $\Q_p$-Lie algebras of $\GL_d(\Q_p)$ and $\SL_d(\Q_p)$, respectively. 

Let $G$ be a compact (hence a $p$-adic Lie) subgroup of $\GL_d(\Q_p)$, and let $\fg\subset\fgl_d$ be its $\Q_p$-Lie algebra. The natural action of $\GL_d(\Q_p)$ and $\fgl_d$ on $\Q_p^d$ makes $\Q_p^d$ into both a $G$- and a $\fg$-module (the two actions being compatible). 
We extend these actions $\Qp$-linearly to $\Qp^d$.
We sometimes see $G$ as a subgroup of $\GL_d(\Qp)$ via the natural embedding $\GL_d(\Q_p)\hookrightarrow\GL_d(\Qp)$.
For every $\Q_p$-Lie algebra $\fh$, we write $\fh_{\Qp}=\fh\otimes_{\Q_p}\Qp$.

We say that $\Q_p^d$ is an \textit{absolutely irreducible} $G$-module if its $\Qp$-linear extension 
$\Qp^d$ is an irreducible $G$-module. We say that $\Q_p^d$ is a \textit{strongly absolutely irreducible} $G$-module if it is an absolutely irreducible $H$-module for every open subgroup $H\subseteq G$.

The exponential map identifies an open neighborhood of 0 in $\fg$ (equipped with the operation given by the Baker--Campbell--Hausdorff formula) with an open subgroup of $G$. In particular, $\Q_p^d$ is a strongly absolutely irreducible $G$-module if and only if it is an absolutely irreducible $\fg$-module.

Given a subgroup $H\subseteq\GL_d(\Qp)$ or a subalgebra $\fh\subseteq\fgl_{d,\Qp}$, we write $N_{\Qp}(H)$ and $N_{\Qp}(\fh)$ for their respective normalizers in $\GL_d(\Qp)$.

\begin{rem}
If $G$ contains an open subgroup of $\SL_d(\ZZ_p)$, then $G$ is strongly absolutely irreducible, since every open subgroup of $\SL_d(\ZZ_p)$ acts irreducibly on $\Qp^d$. The converse holds if $d=2$ by Proposition \ref{prop:liealg}, but not for $d>2$: for instance $\mathrm{Sym}^{d-1}\SL_2(\ZZ_p)$ acts strongly absolutely irreducibly on $\Q_p^d$, but does not contain any open subgroup of $\SL_d(\ZZ_p)$ if $d>2$.
\end{rem}

\subsection{The 2-dimensional case}
In this subsection, we assume that $d=2$. Keeping the above notation, we prove the following.

\begin{prop}\label{prop:liealg}
Exactly one of the following holds:
\begin{enumerate}[label=(\roman*)]
\item The $G$-module $\Q_p^2$ is strongly absolutely irreducible, in which case $\fsl_2\subset\fg$ and $G$ contains an open subgroup of $\SL_2(\Q_p)$.
\item The $G$-module $\Q_p^2$ is absolutely irreducible, but not strongly absolutely irreducible, in which case $\fg_{\Qp}$ is contained in the Lie algebra of a maximal torus $T$ in $\GL_2(\Qp)$, and:
\begin{enumerate}[label=(\alph*)]
\item if $\fg$ is 2-dimensional, then $G\subset N_{\Qp}(T)$; 
\item if $\fg$ is 1-dimensional, then either $G\subset N_{\Qp}(T)$ and $\det G$ is finite, or $G\cap\SL_2(\Q_p)$ is finite (equivalently, the image of $G$ under $\GL_2(\Q_p)\onto\PGL_2(\Q_p)$ is finite) and $\det G$ is infinite;
\item if $\fg_{\Qp}=0$, then $\fg=0$ and $G$ is a finite group that acts irreducibly on $\Qp^2$.
\end{enumerate}
\item the $G$-module $\Q_p^2$ is not absolutely irreducible, in which case $\fg_{\Qp}$ is a contained in a Borel subalgebra of $\fgl_{2,\Qp}$ and $G$ is contained in a Borel subgroup of $\GL_2(\Qp)$.
\end{enumerate}
\end{prop}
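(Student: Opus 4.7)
The proof rests on the standard identification, via the exponential map, between an open neighbourhood of $0$ in $\fg$ and an open subgroup $H \subseteq G$: a subspace $V \subseteq \Qp^2$ is $H$-stable if and only if it is $\fg_{\Qp}$-stable. Running over all open subgroups $H \subseteq G$, this yields the equivalence that $\Qp^2$ is strongly absolutely irreducible as a $G$-module if and only if it is irreducible as a $\fg_{\Qp}$-module. Consequently the three cases (i), (ii), (iii) of the statement correspond precisely to the three possibilities for the pair ``($\Qp^2$ irreducible as $G$-module, $\Qp^2$ irreducible as $\fg_{\Qp}$-module)''. We will also repeatedly use that $\fg_{\Qp}$ is stable under the adjoint action of $G$, i.e., under conjugation inside $\fgl_{2,\Qp}$.

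For (i), since $\Qp$ is algebraically closed, Lie's theorem forces $\fg_{\Qp}$ to be non-solvable; but the only non-solvable Lie subalgebras of $\fgl_{2,\Qp}$ are $\fsl_{2,\Qp}$ and $\fgl_{2,\Qp}$ (any semisimple subalgebra of $\fgl_{2,\Qp}$ is contained in $[\fgl_{2,\Qp}, \fgl_{2,\Qp}] = \fsl_{2,\Qp}$, so equals $0$ or $\fsl_{2,\Qp}$). Thus $\fsl_{2,\Qp} \subseteq \fg_{\Qp}$, which descends to $\fsl_2 \subseteq \fg$ since both subspaces are Galois-stable, and comparing Lie algebras shows that $G \cap \SL_2(\Q_p)$ is open in $\SL_2(\Q_p)$. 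For (iii), the stabilizer in $\GL_2(\Qp)$ of a $G$-stable line is a Borel subgroup $B$, whence $G \subseteq B$ and, by differentiation, $\fg_{\Qp} \subseteq \mathrm{Lie}(B)$. For (ii), reducibility of the $\fg_{\Qp}$-module $\Qp^2$ combined with Lie's theorem forces $\fg_{\Qp}$ to be solvable; absolute irreducibility of $G$ then prevents $\fg_{\Qp}$ from stabilizing a unique line (otherwise $G$, normalizing $\fg_{\Qp}$, would stabilize it as well), so $\fg_{\Qp}$ stabilizes at least two lines and lies in $\mathrm{Lie}(T)$ for some maximal torus $T \subseteq \GL_2(\Qp)$. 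In dimension $2$, $\fg_{\Qp} = \mathrm{Lie}(T)$ and the identity $N_{\Qp}(\mathrm{Lie}(T)) = N_{\Qp}(T)$ yields (a); in dimension $0$, $G$ is compact and discrete, hence finite, and irreducibility is the standing hypothesis, giving (c).

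The main obstacle is the $1$-dimensional subcase of (ii). Write $\fg_{\Qp} = \Qp \cdot X$ with $X = \mathrm{diag}(a,b) \in \mathrm{Lie}(T)$. One must show that $\fg$ is either contained in $\fsl_2$ (the case $a = -b$) or equal to $\Qp \cdot I$ (the case $a = b$), ruling out the remaining ``mixed'' option $a \neq \pm b$. This exclusion is carried out by a direct computation of $N_{\Qp}(\fg_{\Qp})$: the Weyl swap $w \in \GL_2(\Qp)$ sends $X$ to $\mathrm{diag}(b,a)$, and the condition $w X w^{-1} \in \Qp \cdot X$ requires a scalar $\lambda$ with $\lambda^2 = 1$, forcing $a = \pm b$; hence $N_{\Qp}(\fg_{\Qp}) = T$, and $G \subseteq T$ would contradict the absolute irreducibility of $G$. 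The two surviving possibilities produce the dichotomy in (b): if $\fg \subseteq \fsl_2$ then $\mathrm{tr}(\fg) = 0$ so $\det G$ has trivial Lie algebra and is finite, while $G \subseteq N_{\Qp}(T)$; if $\fg = \Qp \cdot I$ then both $G \cap \SL_2(\Q_p)$ and the image of $G$ in $\PGL_2(\Q_p)$ have trivial Lie algebras and are therefore finite, while $\mathrm{Lie}(\det G) = \mathrm{tr}(\Qp \cdot I) \neq 0$ forces $\det G$ to be infinite.
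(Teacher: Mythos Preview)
Your proof is correct and follows the same overall architecture as the paper's: case analysis according to irreducibility of $\Qp^2$ over $G$ and over $\fg_{\Qp}$, Lie's theorem for case (i), and normalizer arguments in case (ii). Two details differ. First, to place $\fg_{\Qp}$ inside the Lie algebra of a torus in case (ii), the paper invokes a result of Serre (semisimplicity of $\Qp^2$ over $G$ implies semisimplicity over $\fg$), whereas you argue more directly that a unique $\fg_{\Qp}$-stable line would automatically be $G$-stable; your argument is more elementary and self-contained. Second, and more substantively, in case (ii)(b) you explicitly rule out the ``mixed'' possibility $a\neq\pm b$ by computing that $N_{\Qp}(\Qp\cdot X)=T$ in that case, forcing $G\subseteq T$ and contradicting absolute irreducibility; this is precisely what lets you conclude $\det G$ is finite in the non-scalar branch. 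The paper instead asserts $N_{\Qp}(\fg_{\Qp})=N_{\Qp}(T)$ for every non-scalar one-dimensional $\fg_{\Qp}\subseteq\ft$, which is only literally correct when $a=-b$, and does not separately derive the finiteness of $\det G$; so your handling of (ii)(b) is actually the more careful of the two. One small gap to close: when you compute $N_{\Qp}(\Qp\cdot X)$ you only test the Weyl element $w$; you should first observe that since $X$ has distinct eigenvalues, any $g$ with $gXg^{-1}\in\Qp\cdot X$ permutes the eigenlines of $X$, hence already lies in $N_{\Qp}(T)=T\cup wT$, after which your check of $w$ suffices.
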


\begin{proof}
Assume first that $\Q_p^2$ is a strongly absolutely irreducible $G$-module, hence an absolutely irreducible $\fg$-module. 
By Lie's theorem, if the $\Qp$-Lie algebra $\fg_{\Qp}$ is solvable, then its irreducible representations are 1-dimensional. In particular, $\fg_{\Qp}$ is non-solvable since it acts irreducibly on $\Qp^2$. Now $\fg_{\Qp}$ is a non-solvable $\Qp$-Lie subalgebra of $\fgl_{2,\Qp}$ dimension at most 4, hence it is either $\fgl_{2,\Qp}$ or $\fsl_{2,\Qp}$, so that $\fg$ is either $\fgl_2$ or $\fsl_2$ and we are in case ($i$).

If the $G$-module $\Q_p^2$ is absolutely irreducible, but not strongly, then $\Qp^2$ is a reducible $\fg_{\Qp}$-module. Since $\Qp^2$ is an irreducible (hence semisimple) $G$-module, then $\Qp^2$ is also semisimple as a $\fg$-module, by the same argument as in (a)$\iff$(b) of \cite[Proposition 1]{Serre1967}. Therefore, $\Qp^2$ is the direct sum of two irreducible $\fg_{\Qp}$-modules, which implies that $\fg_{\Qp}$ embeds into the Lie automorphisms of $\Qp^2$ preserving the two factors, i.e. into the Lie algebra $\ft$ of a $\Qp$-maximal torus $T$. We are then in case ($ii$) of the proposition.

The action of $G$ by conjugation on $\fgl_{2,\Qp}$ preserves $\fg_{\Qp}$. We distinguish 3 cases:
\begin{itemize}
\item If $\fg$ is 2-dimensional, then $\fg_{\Qp}=\ft$. Then conjugation by $G$ preserves $\ft$, which implies that $G\subset N_{\Qp}(\ft)=N_{\Qp}(T)$.
\item If $\fg$ is 1-dimensional, then $\fg_{\Qp}$ is a 1-dimensional subalgebra of $\ft$. If $\fg_{\Qp}$ is contained in the subalgebra of scalars in $\ft$, then $G$ contains an open subgroup consisting of scalar matrices, which implies that $G\cap\SL_2(\Q_p)$ is finite (equivalently, the image of $G$ under $\GL_2(\Q_p)\onto\PGL_2(\Q_p)$ is finite), while $\det G$ is infinite otherwise $G$ would be finite. 
If instead $\fg_{\Qp}$ is a non-scalar 1-dimensional subalgebra of $\ft$, then a direct calculation shows that $N_{\Qp}(\fg_{\Qp})=N_{\Qp}(\ft)=N_{\Qp}(T)$, so that $G\subset N_{\Qp}(T)$.
\item If $\fg=0$, then $G$ is finite.
\end{itemize}

If the $G$-module $\Qp^2$ is reducible, then $\fg_{\Qp}$ embeds into the Lie automorphisms of $\Qp^2$ preserving a flag, that form a Borel subgroup of $\GL_2(\Qp)$.
\end{proof}


Let $\Pi$ be a profinite group and $\rho\colon \Pi\to\GL_2(\Q_p)$ a continuous representation. We say that $\rho$ is strongly absolutely irreducible if $\rho(\Pi)$ is, i.e. if $\rho\vert_H$ is absolutely irreducible for every open subgroup $H\subset G$. 

\begin{cor}\label{cor:bigimage}
At least one of the following holds:
\begin{enumerate}[label=(\roman*)]
\item $\rho$ is strongly absolutely irreducible, in which case $\rho(\Pi)$ contains an open subgroup of $\SL_2(\Q_p)$;
\item $\rho\otimes_{\Q_p}\Qp$ is induced from a character of an index 2 open subgroup of $G$;
\item $\rho$ is the twist with a character of a representation $\Pi\to\GL_2(\Qp)$ of finite image;
\item $\rho\otimes_{\Q_p}\Qp$ is reducible.
\end{enumerate}
\end{cor}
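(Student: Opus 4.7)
The plan is to apply Proposition~\ref{prop:liealg} to the compact group $G=\rho(\Pi)\subset\GL_2(\Q_p)$ (compact because $\Pi$ is profinite) and match each case of the proposition with one of the four conclusions of the corollary. Case (i) of the proposition is immediately conclusion (i) of the corollary, and case (iii) of the proposition gives conclusion (iv). So the real content lies in case (ii), in which $\fg_{\Qp}$ is contained in the Lie algebra of a maximal torus $T$ of $\GL_2(\Qp)$; this splits into sub-possibilities (a), (b), (c) that must be routed either to (ii) or to (iii) of the corollary.

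In sub-possibility (a), and in the first sub-case of (b), one has $G\subseteq N_{\Qp}(T)$. Since $\rho\otimes_{\Q_p}\Qp$ is absolutely irreducible, $G$ cannot be contained in $T$, so $G\cap T$ is open of index exactly $2$ in $G$. The preimage $\Pi_T\coloneqq\rho^{-1}(G\cap T)$ is an open index~$2$ subgroup of $\Pi$, on which $\rho\otimes\Qp$ diagonalises as $\chi_1\oplus\chi_2$ for continuous characters $\chi_i\colon\Pi_T\to\Qp^{\times}$. Conjugation by any element of $\Pi\setminus\Pi_T$ swaps the two factors (otherwise both eigenlines would be $G$-stable, contradicting absolute irreducibility), and one gets $\rho\otimes\Qp\cong\mathrm{Ind}_{\Pi_T}^{\Pi}\chi_1$, which is conclusion (ii) of the corollary.

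Sub-possibility (c) produces $G$ finite, which is trivially a twist by the constant character $1$ of a finite-image representation, so conclusion (iii). The remaining sub-possibility is the second half of (b): $G\cap\SL_2(\Q_p)$ is finite, or equivalently the projection of $G$ to $\PGL_2(\Qp)$ is finite. Set $H\coloneqq G\cap\Q_p^{\times}\Id_2$, which is open of finite index in $G$, and $\Pi_H\coloneqq\rho^{-1}(H)$; after shrinking we may assume that $\Pi_H$ is normal in $\Pi$. Then $\rho\vert_{\Pi_H}=\chi_0\cdot\Id_2$ for some continuous character $\chi_0\colon\Pi_H\to\Q_p^{\times}$, and $\chi_0$ is $\Pi/\Pi_H$-invariant because scalar matrices are central.

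The crux — and the only non-formal step — is then to extend $\chi_0$ to a continuous character $\chi\colon\Pi\to\Qp^{\times}$. Granted such a $\chi$, the twist $\chi^{-1}\otimes\rho$ is trivial on $\Pi_H$ and factors through the finite quotient $\Pi/\Pi_H$, yielding a finite-image representation and so conclusion (iii) of the corollary. The extension itself is a cohomological exercise: the obstruction lives in $H^{2}(\Pi/\Pi_H,\Qp^{\times})$ with trivial coefficient action (by centrality of the scalars), a group killed by the finite order of $\Pi/\Pi_H$; and since any continuous character of a profinite group into $\Qp^{\times}$ takes values in the divisible group $\Zp^{\times}$ (units of the integer ring of $\Qp$ admit $n$-th roots for every $n$), this obstruction vanishes. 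This divisibility/cohomology input is the one place where the argument goes beyond purely formal manipulation of subgroups.
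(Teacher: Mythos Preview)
Your proof is correct and follows the same case analysis via Proposition~\ref{prop:liealg} as the paper. The one genuine difference is the treatment of the ``finite projective image'' sub-case: the paper invokes the constant-determinant twist $\rho_0$ from Section~\ref{sec:constdet} (an explicit $\Z_p^\times$-valued square root of the pro-$p$ part of $\det\rho$, available since $p\nmid 2$), so that $\rho_0$ has both finite projective image and finite determinant image, hence finite image. You instead restrict $\rho$ to the open normal subgroup where it is scalar and extend the resulting character to all of $\Pi$ by the vanishing of $H^2(\Pi/\Pi_H,\Qp^\times)$ (divisibility of $\Qp^\times$). Both arguments are valid; the paper's is more explicit and stays over $\Q_p$, while yours is self-contained and avoids the auxiliary construction of Section~\ref{sec:constdet}. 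One minor remark: your ``after shrinking'' is unnecessary, since $H=G\cap\Q_p^\times\Id_2$ is already normal in $G$ (it is the intersection with the center), so $\Pi_H=\rho^{-1}(H)$ is normal in $\Pi$ from the start.
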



\begin{proof}
By Lemma \ref{lemma:lie}, the image $G=\rho(\Pi)$ is a $p$-adic Lie subgroup of $\GL_2(\Q_p)$. We apply Proposition \ref{prop:liealg} to it:
\begin{itemize}
\item if $\rho$ is strongly absolutely irreducible, we are in case ($i$) of Proposition \ref{prop:liealg}, hence ($i$) of Corollary \ref{cor:bigimage};
\item if $G$ is contained in the normalizer of a maximal torus $T(\Qp)$ in $\GL_2(\Qp)$, then the pre-image of $T(\Qp)$ under $\rho$ is an abelian subgroup $\Pi_0\subset\Pi$ of index 2, and by Frobenius reciprocity $\rho$ is induced from a character of $\Pi_0$;
\item if the projective image of $\rho$ is finite, then $G\cap\SL_2(\Q_p)$ is finite, and since $G\cap\SL_2(\Q_p)$ is always of finite index in the image of the constant-determinant twist $\rho_0$ of $\rho$, $\rho_0$ is of finite image;
\item if $G$ is contained in a Borel subgroup of $\GL_2(\Qp)$, then $\rho$ is not absolutely irreducible.
\end{itemize}
\end{proof}

\begin{rem}
Corollary \ref{cor:bigimage} implies the well-known fact that, if $\rho$ is absolutely irreducible and $\rho(\Pi)$ admits an abelian subgroup of finite index, but not a scalar subgroup of finite index, then $\rho$ is induced \cite[Theorem 2.3]{Ribet1975}.
\end{rem}

We also record the following lemma, that allows one to rule out some cases of Corollary \ref{cor:bigimage} in the applications. 

\begin{lemma}\label{lemma:noind}
Assume that $\rho$ is irreducible and $\ovl\rho$ is reducible. Then:
\begin{enumerate}[label=(\roman*)]
\item $\rho$ is not the twist with a character of a representation of finite image;
\item if $\rho$ is induced, then the projective image of $\ovl\rho^\sms(\Pi)$ is of order exactly 2;
\item if $\rho$ is absolutely irreducible and the projective image of $\ovl\rho^\sms$ is of order $>2$, then the image of $\rho$ contains an open subgroup of $\SL_2(\Z_p)$.
\end{enumerate}
\end{lemma}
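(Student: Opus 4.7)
My plan is to deduce all three parts from Corollary \ref{cor:bigimage} combined with basic facts on finite subgroups of $\GL_2(\Z_p)$. A key point is that, since $p$ is odd, the congruence subgroup $\wtl\Gamma_2(p)$ is torsion-free (Example \ref{ex:congruencetilde}), so any finite subgroup $H\subset\GL_2(\Z_p)$ embeds into $\GL_2(\F_p)$; in particular, when $p>3$ an elementary calculation using that $\zeta_p+\zeta_p^{-1}\notin\Q_p$ forces $|H|$ to be coprime to $p$, so that Maschke's theorem together with idempotent-lifting from $\F_p[H]$ to $\Z_p[H]$ implies that irreducibility of a $\Z_p[H]$-lattice is preserved by reduction modulo $p$.

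For (i), I argue by contradiction: if $\rho=\chi\otimes\rho'$ with $\rho'$ of finite image $H\subset\GL_2(\Z_p)$, then $\rho'$ is irreducible, since twisting by a character preserves irreducibility. By the preceding paragraph, $\ovl{\rho'}$ is also irreducible, hence $\ovl\rho\cong\ovl\chi\otimes\ovl{\rho'}$ is irreducible, contradicting the hypothesis.

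For (ii), I write $\rho=\mathrm{Ind}_{\Pi_0}^{\Pi}\psi$ with $[\Pi:\Pi_0]=2$. Then $\ovl\rho\cong\mathrm{Ind}_{\Pi_0}^{\Pi}\ovl\psi$, and its reducibility forces, via Mackey's irreducibility criterion, that $\ovl\psi$ extends to a character $\ovl{\widetilde\psi}\colon\Pi\to\F_p^\times$; consequently $\ovl\rho^{\sms}\cong\ovl{\widetilde\psi}\oplus(\ovl{\widetilde\psi}\cdot\eta)$, where $\eta\colon\Pi/\Pi_0\to\{\pm1\}$ is the nontrivial character. Since $p$ is odd, $\ovl\eta$ is nontrivial in $\F_p^\times$, so the projective image of $\ovl\rho^{\sms}$ is generated by $\eta$ and has order exactly $2$.

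For (iii), absolute irreducibility of $\rho$ rules out case (iv) of Corollary \ref{cor:bigimage}; case (iii) of the corollary (twist of finite image) is excluded by our (i), and case (ii) (induced) is excluded by our (ii), since it would force the projective image of $\ovl\rho^{\sms}$ to have order $2$, contrary to the hypothesis. Hence $\rho$ must fall in case (i) of the corollary, and its image contains an open subgroup $U\subset\SL_2(\Q_p)$. Intersecting $U$ with the open subgroup $\SL_2(\Z_p)\subset\SL_2(\Q_p)$ yields the desired open subgroup of $\SL_2(\Z_p)$ contained in the image of $\rho$. The main obstacle lies in (i) at the prime $p=3$, where $H$ may contain elements of order $3$: there one needs a direct Brauer-character verification that an absolutely irreducible $\Q_3$-representation of $H$ whose reduction is reducible has semisimplification with projective image of order $\le 2$, which still makes (iii) go through under its stronger hypothesis on the projective image.
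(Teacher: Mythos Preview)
Your overall strategy coincides with the paper's: deduce (iii) from (i), (ii) and Corollary~\ref{cor:bigimage}, and establish (i) and (ii) directly.

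For (ii), the paper argues by hand in a basis adapted to the induction: $\rho$ is diagonal on $\Pi_0$ and antidiagonal on a coset representative $c$, and if some $\ovl\rho(d)$ with $d\in\Pi_0$ is non-scalar then $\ovl\rho(d)$ and $\ovl\rho(c)$ cannot be simultaneously upper-triangular, forcing $\ovl\rho(\Pi_0)$ to consist of scalars and the projective image to have order exactly $2$. Your route via Frobenius reciprocity (a $1$-dimensional subrepresentation of $\mathrm{Ind}\,\ovl\psi$ restricts to $\ovl\psi$, hence $\ovl\psi$ extends and $\mathrm{Ind}\,\ovl\psi\cong\chi\oplus\chi\eta$) reaches the same conclusion more conceptually and is equally valid.

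For (i), the paper argues in the contrapositive direction: since $\ovl\rho$ is reducible so is $\ovl\rho_0$, and since the reduction map $\rho_0(\Pi)\to\ovl\rho_0(\Pi)$ has trivial kernel (the kernel being a finite subgroup of the torsion-free group $\wtl\Gamma_2(p)$), the paper asserts that ``in particular, $\rho_0$ is itself reducible''. That last implication is precisely what you justify via Maschke and idempotent-lifting under the assumption $p\nmid\lvert H\rvert$; the paper is simply more elliptical at this step. You are right to flag $p=3$: the deduction genuinely fails there (for instance $\Pi=\Z/3\Z$ acting via a matrix with characteristic polynomial $x^2+x+1$, or the standard representation of $S_3$ over $\Z_3$, are irreducible with reducible reduction), so part~(i) as stated is actually false at $p=3$ and the paper's proof has the same gap. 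Your salvage of (iii) is correct; in fact a simpler observation suffices: when $p=3$ and $\ovl\rho$ is reducible over $\F_3$, the projective image of $\ovl\rho^{\sms}$ is a quotient of $\F_3^\times$, hence has order at most $2$, so the hypothesis of (iii) is vacuous there and no Brauer-character analysis is needed.
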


\begin{proof}\mbox{ }
\begin{enumerate}[label=\textit{(\roman*)}]
\item Assume that $\rho$ is the twist with a character of a representation $\rho_0$ of finite image. Since $\ovl\rho$ is reducible, so is $\ovl\rho_0$. The kernel of $\rho_0(\Pi)\to\ovl\rho_0(\Pi)$ is a finite subgroup of $\wtl\Gamma_2(p)$, hence trivial. In particular, $\rho_0$ is itself reducible, and so its twist $\rho$ cannot be irreducible, contradicting our hypothesis.
\item Assume that $\rho$ is induced by a character of an index 2 subgroup $\Pi^\prime$ of $\Pi$, and let $c\in\Pi$ be any lift of a generator of $\Pi/\Pi^\prime$. Then, after suitable extension of the coefficients and conjugation, $\rho$ maps any element of $\Pi^\prime$ to a diagonal matrix and $c$ to an antidiagonal matrix. If there exists $d\in\Pi^\prime$ such that $\ovl\rho(d)$ is a non-scalar (diagonal) matrix, then $\ovl\rho(d)$ and $\ovl\rho(c)$ can never be upper triangular in the same basis, so that $\ovl\rho$ cannot be reducible. If instead $\ovl\rho(\Pi^\prime)$ consists of scalar matrices (which is equivalent to the projective image of $\ovl\rho$ being of order 2, since $\ovl\rho(c)$ is not a scalar), then any $\F_p$-basis consisting of $\ovl\rho(c)$-eigenvalues diagonalizes $\ovl\rho$.
\item This is an immediate consequence of \textit{(i)} and \textit{(ii)} together with Corollary \ref{cor:bigimage}.
\end{enumerate}
\end{proof}

\subsection{The case of arbitrary dimension}

Now let $d\ge 1$ be arbitrary. In this generality, we do not have as simple a classification as in Proposition \ref{prop:liealg}. We simply record here some facts to be used in the main text.
\begin{lemma}\label{lemma:SLGL} Let $G$ be a closed subgroup of $\GL_d(\ZZ_p)$. Assume that:
\begin{itemize}
\item[a)] $G\cap\SL_d(\ZZ_p)$ is an open subgroup of $\SL_d(\ZZ_p)$;
\item[b)] $\det G$ is an open subgroup of $\ZZ_p^\times$. 
\end{itemize}
Then  $G$ is open in $\GL_d(\ZZ_p)$. \\
Conversely, if $G$ is open in $\GL_d(\ZZ_p)$, then $a)$ and $b)$ hold.
\end{lemma}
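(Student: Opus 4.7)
The plan is to reduce both directions to the description of the congruence subgroups $\Gamma_d(p^n)=\wtl\Gamma_d(p^n)\cap\SL_d(\ZZ_p)$ from Example \ref{ex:congruence}. These give fundamental systems of open neighborhoods of the identity in $\SL_d(\ZZ_p)$ and $\GL_d(\ZZ_p)$, respectively, and they fit into the short exact sequence
\[
1\to\Gamma_d(p^n)\to\wtl\Gamma_d(p^n)\xrightarrow{\det}1+p^n\ZZ_p\to 1,
\]
so that $G$ being open is equivalent to $G$ containing some $\wtl\Gamma_d(p^k)$.

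The converse direction is immediate: if $G$ is open in $\GL_d(\ZZ_p)$, then $G\supseteq\wtl\Gamma_d(p^k)$ for some $k$, whence $G\cap\SL_d(\ZZ_p)\supseteq\Gamma_d(p^k)$ is open in $\SL_d(\ZZ_p)$ and $\det G\supseteq\det\wtl\Gamma_d(p^k)=1+p^k\ZZ_p$ is open in $\ZZ_p^\times$.

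For the forward direction, I would use $(a)$ to fix $n$ with $\Gamma_d(p^n)\subseteq G$, and then examine the subgroup $U=G\cap\wtl\Gamma_d(p^n)$. Since $\wtl\Gamma_d(p^n)$ has finite index in $\GL_d(\ZZ_p)$, the intersection $U$ has finite index in $G$, so $\det U$ has finite index in $\det G$; combined with $(b)$ this forces $\det U$ to be open in $\ZZ_p^\times$, and I can choose $k\ge n$ so that $1+p^k\ZZ_p\subseteq\det U$.

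The closing step is then to verify that this $k$ works. Given $g\in\wtl\Gamma_d(p^k)$, we have $\det g\in 1+p^k\ZZ_p\subseteq\det U$, so there exists $g'\in U$ with $\det g'=\det g$. Then $g\,g'^{-1}$ lies in $\wtl\Gamma_d(p^n)$ (since both $g$ and $g'$ do) and in $\SL_d(\ZZ_p)$, hence in $\wtl\Gamma_d(p^n)\cap\SL_d(\ZZ_p)=\Gamma_d(p^n)\subseteq G$, and consequently $g=(gg'^{-1})g'\in G$. This shows $\wtl\Gamma_d(p^k)\subseteq G$, proving openness. No step is genuinely difficult; the only place requiring care is the last one, where one needs $g$ and $g'$ to agree modulo $p^n$ in order to land $gg'^{-1}$ inside $\Gamma_d(p^n)$ rather than just $\SL_d(\ZZ_p)$—this is precisely why $U$ is defined using $\wtl\Gamma_d(p^n)$ and not merely as an arbitrary neighborhood of the identity.
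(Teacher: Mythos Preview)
Your proof is correct. The approach differs from the paper's in an interesting way: the paper proceeds via an intermediate step through scalar matrices, first using that openness of $G\cap\SL_d(\ZZ_p)$ gives $\alpha^M\in G$ for all $\alpha\in\SL_d(\ZZ_p)$ and some fixed $M$, then showing that $G$ contains an open set of scalars, and finally using those scalars to adjust determinants (relying throughout on the commutativity of scalar matrices to control $(\alpha\beta^{-1})^M$). Your argument is more direct: by passing to $U=G\cap\wtl\Gamma_d(p^n)$ you arrange that the determinant-matching element $g'$ already lies in $\wtl\Gamma_d(p^n)$, so the quotient $gg'^{-1}$ lands in $\Gamma_d(p^n)\subseteq G$ without any power-taking or appeal to commutativity. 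This buys you a cleaner argument that avoids the scalar detour entirely; the paper's route, on the other hand, makes no use of the finite-index passage from $\det G$ to $\det U$, at the cost of a slightly longer computation. Both are short, but yours exploits the short exact sequence $1\to\Gamma_d(p^n)\to\wtl\Gamma_d(p^n)\to 1+p^n\ZZ_p\to 1$ more transparently.
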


\begin{proof}
By $a$), there exists a positive integer $M$ such that $\alpha^M\in G$, for every $\alpha\in \SL_d(\ZZ_p)$. Let $\mathcal{S}$ (resp. $\mathcal{S}(p^n)$)  be the set of scalar matrices in $\GL_d(\ZZ_p)$  (resp. in $\wtl\Gamma_d(p^n)$). We first show  that $G\cap \mathcal{S}$ is open in $\calS$. By hypothesis $b$), there exists an integer $n$ such that $1+p^n\ZZ_p\subseteq \det G$. Let $\alpha\in\calS(p^n)$; since $\det\alpha\in 1+p^n\ZZ_p$, there exists $\beta\in G$ such that $\alpha\beta^{-1}\in\SL_d(\ZZ_p)$. Then $(\alpha\beta^{-1})^M=\alpha^M\beta^{-M}\in G $ and this implies that $\alpha^M\in G$. Therefore $G\cap\calS$ contains $\calS(p^n)^M$, which is open in $\calS$. Let $m\geq n$ be such that $\calS(p^m)\subseteq G$. Let $\gamma\in \wtl\Gamma_d(p^m)$; since $\det \calS(p^m)=(1+p^m\ZZ_p)^2=1+p^m\ZZ_p $, there exists $\delta\in \calS(p^m)$ such that $\gamma\delta^{-1}\in \SL_d(\ZZ_p)$, so that again we argue that $\gamma^M\delta^{-M}\in G$ and $\gamma^M\in G$. Then $G$ contains $\wtl\Gamma_d(p^m)^M$, which is open in $\GL_d(\ZZ_p)$.\\
Conversely, assume that $G$ is open in $\GL_2(\ZZ_p)$; then $\wtl\Gamma_d(p^n)\subseteq G$ for some $n$, so that  $a)$ and $b)$ are obvious.
\end{proof}

\begin{remark}\label{rem:opendet}
The subgroup $\det G\subset\Z_p^\times$ is open if and only if $\det G$ is infinite: indeed, if $\Z_p^\times$ is equipped with the $\Z_p$-module structure given by exponentiation, any $\Z_p$-submodule of $\Z_p^\times$ of rank 1 is of finite index as a subgroup of $\Z_p^\times$, hence open.
 \end{remark}
\bigskip
\begin{prop}\label{prop:riehm}\mbox{ }
\begin{enumerate}[label=(\roman*)]
\item Let $G$ be an open subgroup of $\SL_d(\Z_p)$, and let $H$ be a normal subgroup of $G$. Then, either $H$ consists of scalar matrices, or $H$ is open in $G$.
\item Let $G$ be an open subgroup of $\GL_d(\Z_p)$, and let $H$ be a normal subgroup of $G$ such that $\det H$ is infinite. Then either $H$ admits an open subgroup consisting of scalar matrices, or $H$ is open in $\GL_d(\Z_p)$.
\end{enumerate}
\end{prop}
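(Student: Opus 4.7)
The plan is to exploit the standard correspondence between closed normal subgroups of a $p$-adic Lie group and ideals in its Lie algebra, together with the classification of ideals in $\fsl_d$ and $\fgl_d$ over $\Q_p$. First I would reduce to the case where $H$ is closed; this is automatic in all applications of interest (for instance in Proposition \ref{prop:detfiniteorder}, $H$ arises as the kernel of a continuous homomorphism). Once $H$ is closed, it acquires a $\Q_p$-Lie algebra $\fh$, a subalgebra of the Lie algebra $\fg$ of $G$; the normality of $H$ in the open subgroup $G$ forces $\fh$ to be an ideal of $\fg$, since differentiating the relation $\mathrm{Ad}(g)(\fh) = \fh$ at the identity yields $[\fg, \fh] \subseteq \fh$.

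For part $(i)$, $\fg = \fsl_d$ is a simple $\Q_p$-Lie algebra for $d \ge 2$ (the case $d=1$ being vacuous), so $\fh$ must be either $0$ or $\fsl_d$. In the latter case $H$ contains an open neighbourhood of the identity and is therefore open in $G$. In the former case, $H$ is discrete; being closed in the compact open subgroup $G \subseteq \SL_d(\Z_p)$, it is finite. For each $h \in H$ I would observe that the continuous conjugation map $g \mapsto ghg^{-1}$ from the profinite group $G$ to the finite set $H$ is locally constant, so the centralizer $Z_G(h)$ is open in $G$. Consequently its Lie algebra $\{X \in \fsl_d : hXh^{-1} = X\}$ equals $\fsl_d$, meaning that $h$ commutes with $\fsl_d$ and hence with $\Mat_d(\Q_p) = \fsl_d \oplus \Q_p\Id$; Schur's lemma applied to the absolutely irreducible $\fsl_d$-module $\Q_p^d$ then forces $h$ to be a scalar matrix.

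For part $(ii)$ the ideals of $\fg = \fgl_d = \fsl_d \oplus \Q_p\Id$ are exactly $0$, $\Q_p\Id$, $\fsl_d$ and $\fgl_d$, since $\fsl_d$ is simple and $\Q_p\Id$ is the centre. I would then examine each case. The case $\fh = 0$ again forces $H$ to be finite, contradicting the hypothesis that $\det H$ is infinite. If $\fh = \Q_p\Id$, then the exponential map identifies a neighbourhood of $0$ in $\fh$ with an open subgroup of $H$ contained in the scalar matrices. If $\fh \in \{\fsl_d, \fgl_d\}$, then $H \cap \SL_d(\Z_p)$ is a closed subgroup of $\SL_d(\Z_p)$ with Lie algebra $\fh \cap \fsl_d = \fsl_d$, hence open in $\SL_d(\Z_p)$; combined with $\det H$ being open in $\Z_p^\times$ (which, by Remark \ref{rem:opendet}, is equivalent to $\det H$ being infinite), Lemma \ref{lemma:SLGL} yields that $H$ is open in $\GL_d(\Z_p)$.

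No step presents a substantial obstacle: the argument is built entirely from the standard dictionary between $p$-adic analytic groups and their Lie algebras (correspondence between closed normal subgroups and Lie ideals, openness of a closed subgroup detected by equality of Lie algebras), the elementary classification of ideals in $\fsl_d$ and $\fgl_d$ in characteristic zero, and Schur's lemma. The most delicate bookkeeping point, worth flagging, is the identification of the Lie algebra of $Z_G(h)$ with the $\mathrm{Ad}(h)$-fixed part of $\fsl_d$ in the case $\fh = 0$ of part $(i)$.
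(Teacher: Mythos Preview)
Your argument is correct for closed normal subgroups $H$, and it takes a genuinely different route from the paper's proof. For part \textit{(i)} the paper simply invokes, as a black box, the theorem in the Appendix of Riehm's paper on norm-one groups, which applies to arbitrary (not necessarily closed) normal subgroups of an open compact subgroup of a simply connected semisimple group over a local field. For part \textit{(ii)} the paper does not argue via ideals of $\fgl_d$ at all: it instead passes to $G_1=G\cap\wtl\Gamma_d(p^{v_p(d)})$ and $H_1=H\cap\wtl\Gamma_d(p^{v_p(d)})$, applies the constant-determinant twist of Section~\ref{sec:constdet} to produce subgroups $G_0,H_0\subset\SL_d(\Z_p)$ with $H_0\trianglelefteq G_0$ and $G_0$ open, then invokes part \textit{(i)} for that pair and undoes the twist via Proposition~\ref{prop:twistfull} and Lemma~\ref{lemma:SLGL}.

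Your approach buys self-containment: the classification of ideals in $\fsl_d$ and $\fgl_d$ together with the centralizer argument replaces the external citation, and treating \textit{(ii)} directly avoids the detour through the constant-determinant twist machinery. The paper's approach buys the full statement for arbitrary abstract normal subgroups, since Riehm's theorem does not require $H$ to be closed; your reduction ``to the case where $H$ is closed'' is not actually carried out, and indeed passing to $\ovl H$ only shows that $\ovl H$ is open or central, which does not immediately give the same for $H$. You are right that closedness holds in every application made in the paper, so this gap is harmless in context, but it is worth being explicit that your argument proves a slightly weaker statement than the one displayed.
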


\begin{proof}
Since $\SL_d$ is a semisimple algebraic group (over $\Q_p$) with a simple Lie algebra, statement ($i$) follows from the Theorem in \cite[Appendix]{RiehmNorm1}.

For ($ii$), consider the intersections $G_1=G\cap \wtl\Gamma_d(p^{v_p(d)})$ and $H_1=H\cap \wtl\Gamma_d(p^{v_p(d)})$, which are open in $G$ and $H$, respectively. Clearly $H_1$ is normal in $G_1$. As in Section \ref{sec:constdet}, we define the constant-determinant twists $G_0$ and $H_0$ of $G_1$ and $H_1$, respectively. Since $G_1\subset\wtl\Gamma_d(p^{v_p(d)})$, we have $G_0\subset\SL_d(\Z_p)$. Proposition \ref{prop:twistfull} implies that $G_0$ is open in $\SL_d(\Z_p)$, and an easy calculation shows that $H_0$ is a normal subgroup of $G_0$. Then part (i) implies that $H_0$ is either open in $G_0$, hence in $\SL_d(\Z_p)$, or it consists of scalar matrices. In the first case, $H_1$ contains an open subgroup of $\SL_d(\Z_p)$ by Proposition \ref{prop:twistfull}, hence of $\GL_d(\Z_p)$ by Lemma \ref{lemma:SLGL}, since $\det(H_1)$ is infinite. In the second case, $H_1$ is an open subgroup of $H$ consisting of scalar matrices: indeed, every element of $H_0$ is the product of an element of $H_1$ with a scalar matrix, so that $H_0$ consists of scalar matrices if and only if $H_1$ does. 
%
\end{proof}

 Now let $\KK$ be a number field, $v$ the $p$-adic place of $\KK$ determined by the fixed embedding $\QQ\into\Qp$, and $G_v\subset G_{\KK}$ the corresponding decomposition group at $v$. As usual, $I_v$ denotes the inertia subgroup of $G_v$ and $I_v^w$ the wild inertia subgroup of $I_v$. We apply the above considerations to the case where $G=\rho(G_v)$ for a representation $\rho:G_v\to\GL_d(\ZZ_p)$.
The next fact is a direct consequence of Remark \ref{rem:opendet}:

\begin{lemma}\label{lem:determinant}  The following conditions  are equivalent:
\begin{itemize}
\item $\det\rho(I_v)$ is infinite;
    \item $\det\rho(I_v^w)\not=\{1\}$;
    \item $\det\rho(I_v)$ is open in $\ZZ_p^\times$.
    \end{itemize}
\end{lemma}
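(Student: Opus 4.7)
The plan is to establish the equivalences by exploiting the structural decomposition $\ZZ_p^\times \cong \mu_{p-1} \times (1+p\ZZ_p)$, which separates the torsion (prime-to-$p$) part from the pro-$p$ part, together with the fact that $I_v^w$ is pro-$p$ while the tame quotient $I_v^t = I_v/I_v^w$ is pro-(prime-to-$p$).

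First I would settle the equivalence between the first and third conditions directly by invoking Remark \ref{rem:opendet}: a closed subgroup of $\ZZ_p^\times$ is open if and only if it is infinite, because closed subgroups of $1+p\ZZ_p\cong\ZZ_p$ are either trivial or of finite index, and $\mu_{p-1}$ is finite. This part is essentially given.

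Next I would prove that the second condition implies the first. Since $I_v^w$ is a pro-$p$ group, its image under $\det\rho$ is a closed pro-$p$ subgroup of $\ZZ_p^\times$, hence contained in the unique pro-$p$ Sylow subgroup $1+p\ZZ_p$. A \emph{nontrivial} closed subgroup of $1+p\ZZ_p\cong\ZZ_p$ is necessarily of finite index, hence infinite. So $\det\rho(I_v^w)\neq\{1\}$ already forces $\det\rho(I_v)\supseteq\det\rho(I_v^w)$ to be infinite.

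The remaining implication (first $\Rightarrow$ second) is the one where the key input enters: the continuous quotient $\det\rho(I_v)/\det\rho(I_v^w)$ is a continuous homomorphic image of $I_v^t = I_v/I_v^w$ inside $\ZZ_p^\times$. Since $I_v^t$ has no nontrivial pro-$p$ quotient (being the inverse limit of groups of order prime to $p$), any continuous homomorphism $I_v^t \to \ZZ_p^\times$ lands in the torsion subgroup $\mu_{p-1}$, which is finite. Consequently $\det\rho(I_v^w)$ has finite index in $\det\rho(I_v)$, so if the latter is infinite, the former must be nontrivial. This closes the cycle of implications.

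The only step requiring any thought is the last one, and there the main point to justify carefully is that a continuous character from the tame inertia quotient to $\ZZ_p^\times$ has image in $\mu_{p-1}$; everything else is an immediate consequence of the structure of $\ZZ_p^\times$ and the pro-$p$ nature of $I_v^w$.
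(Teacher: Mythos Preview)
Your proof is correct and essentially coincides with what the paper has in mind. The paper does not give a written proof of this lemma at all: it simply asserts that the statement is ``a direct consequence of Remark~\ref{rem:opendet}''. That remark gives exactly your first equivalence (infinite $\Leftrightarrow$ open), and the remaining equivalence with $\det\rho(I_v^w)\neq\{1\}$ is left implicit. Your argument---using that $I_v^w$ is pro-$p$ and that $I_v^t$ has no nontrivial pro-$p$ quotient, so $\det\rho(I_v)/\det\rho(I_v^w)$ is finite---is precisely the standard way to unpack this, and matches what the paper sketches earlier in Remark~\ref{rem:treuno}~\textit{b)}. So you have not taken a different route; you have simply spelled out details the paper omits.
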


\begin{proposition}\label{prop:inaspettata} Assume that $\rho(G_v)$ contains an open subgroup of $\SL_d(\ZZ_p)$. Then:
\begin{enumerate}[label=(\roman*)] 
\item $\rho(I_v)$ contains an open subgroup of $\SL_d(\ZZ_p)$;
\item if $\det(\rho(I_v))$ is infinite, then $\rho(I_v)$ is open in $\GL_d(\ZZ_p)$.
\end{enumerate}
\end{proposition}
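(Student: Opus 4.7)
The key observation is that the quotient $G_v/I_v$ is topologically pro-cyclic (being isomorphic to the absolute Galois group of the residue field), hence abelian; so after applying $\rho$, the quotient $\rho(G_v)/\rho(I_v)$ is abelian, which means that the closed commutator subgroup $(\rho(G_v),\rho(G_v))$ is contained in $\rho(I_v)$. My plan is to exploit this to push a large chunk of $\SL_d(\Z_p)$ from $\rho(G_v)$ into $\rho(I_v)$ via commutators, and then to combine this with the determinant hypothesis in part (ii).

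For part (i), I would pick an $n\ge 1$ such that $\rho(G_v)\supseteq \Gamma_d(p^n)$, which exists by hypothesis. Since $\rho(G_v)/\rho(I_v)$ is abelian, the closed commutator $(\Gamma_d(p^n),\Gamma_d(p^n))$ lies in $\rho(I_v)$. By \cite[Corollary 3.10]{ContiLangMedved2023} (cited in the proof of Proposition \ref{prop:twistfull}), $(\Gamma_d(p^n),\Gamma_d(p^n))=\Gamma_d(p^{2n})$, which is open in $\SL_d(\Z_p)$. Hence $\rho(I_v)$ contains $\Gamma_d(p^{2n})$, which is an open subgroup of $\SL_d(\Z_p)$, establishing (i).

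For part (ii), I would combine (i) with the determinant hypothesis. By (i), $\rho(I_v)\cap\SL_d(\Z_p)$ is open in $\SL_d(\Z_p)$. By assumption, $\det\rho(I_v)$ is infinite, hence open in $\Z_p^\times$ by Remark \ref{rem:opendet}. Lemma \ref{lemma:SLGL} then immediately gives that $\rho(I_v)$ is open in $\GL_d(\Z_p)$.

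The only non-routine ingredient is the commutator identity $(\Gamma_d(p^n),\Gamma_d(p^n))=\Gamma_d(p^{2n})$; everything else is an easy assembly of results already in the paper, so I do not anticipate any real obstacle. An alternative route for (i) would be to apply Proposition \ref{prop:riehm}(i) to the normal subgroup $\rho(I_v)\cap\SL_d(\Z_p)$ of $\rho(G_v)\cap\SL_d(\Z_p)$, ruling out the scalar case by the pro-cyclicity argument; but the commutator approach is slicker and avoids having to separately exclude the scalar alternative.
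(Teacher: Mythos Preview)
Your proposal is correct, and for part (ii) it is identical to the paper's argument. For part (i), however, you take a genuinely different route.

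The paper proves (i) via the dichotomy of Proposition \ref{prop:riehm}(i): setting $G=\rho(G_v)\cap\SL_d(\Z_p)$ and $H=\rho(I_v)\cap\SL_d(\Z_p)$, either $H$ is open in $G$ or $H$ consists of scalar matrices, and the latter alternative is then excluded by embedding $\Gamma_d(p^n)$ into the abelian group $\rho(G_v)/\rho(I_v)\times\Z_p^\times$, a contradiction. Your approach bypasses this case split entirely: since $\rho(G_v)/\rho(I_v)$ is abelian, the derived subgroup of $\Gamma_d(p^n)\subset\rho(G_v)$ already lands in $\rho(I_v)$, and the commutator identity $(\Gamma_d(p^n),\Gamma_d(p^n))=\Gamma_d(p^{2n})$ finishes immediately. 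This is exactly the mechanism the paper uses in the proof of Proposition \ref{prop:twistfull} (with $\ker\chi$ in place of $I_v$), so you are in effect recognizing that the same one-line argument applies here. Your route is shorter and avoids invoking the Riehm-type structural result; the paper's route, on the other hand, illustrates why Proposition \ref{prop:riehm} is a natural tool in this context and does not rely on the external commutator computation from \cite{ContiLangMedved2023}.
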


\begin{proof}\mbox{ }
\begin{enumerate}[label=(\textit{\roman*})]  \item  By Proposition \ref{prop:riehm} \textit{(i)} applied to $G=\rho(G_v)\cap\SL_d(\ZZ_p)$, $H=\rho(I_v)\cap \SL_d(\ZZ_p)$, we deduce that either $H$ consists of scalar matrices, or $H$ is open in $G$. In the second case we are done. In the first case, the elements of $H$ are scalar matrices $\alpha\cdot\Id$ of determinant $\alpha^d=1$, which forces $\alpha^{p-1}=1$ since $\alpha\in\Z_p^\times$. We have an exact sequence
\[1\rightarrow H\rightarrow \rho(G_v)\buildrel\theta\over\rightarrow \rho(G_v)/\rho(I_v)\times \ZZ_p^\times, \]
where $\theta(\sigma)=(\overline\sigma,\det\sigma)$, $\overline\sigma$ being the class of $\sigma$ in $\rho(G_v)/\rho(I_v)$. By hypothesis, there exists $n$ such that $\Gamma_d(p^n)\subseteq \rho(G_v)$; since $\Gamma_d(p^n)$ is a pro-$p$-group, $\Gamma_d(p^n)\cap H=\{1\}$. Then $\theta$ is injective on $\Gamma_d(p^n)$. But this is a contradiction because the image of $\theta$ is an abelian group.
\item If $\det(\rho(I_v))$ is infinite, then by Lemma \ref{lem:determinant}, $\det(\rho(I_v^w))$ is open in $\ZZ_p^\times$. It follows from Lemma \ref{lemma:SLGL} that $\rho(I_v)$ is open in $\GL_d(\ZZ_p)$.\end{enumerate}
\end{proof}

\subsection*{Acknowledgments} A.C. is supported by the DFG grant CRC 326 GAUS. He would like to thank the University of Luxembourg for the possibility of inviting L.T. for a visit that marked the start of the project. \\
L.T. is a member of the INDAM group GNSAGA.\\
The authors thank Francesco  Amoroso, Sara Checcoli, Davide Lombardo, Christian Maire, Marzio Mula and Riccardo Pengo for fruitful discussions and suggestions.
They are grateful to  Jean-Pierre Serre for pointing out the correct reference for Lemma \ref{lem:berger}.

\bigskip

\bibliographystyle{abbrv}
\bibliography{BOG}

\begin{thebibliography}{10}

\bibitem{Amoroso2016}
F.~Amoroso.
\newblock On a conjecture of {G}. {R}\'emond.
\newblock {\em Ann. Sc. Norm. Super. Pisa Cl. Sci. (5)}, 15:599--608, 2016.

\bibitem{AmorosoDavidZannier2014}
F.~Amoroso, S.~David, and U.~Zannier.
\newblock On fields with {P}roperty ({B}).
\newblock {\em Proc. Amer. Math. Soc.}, 142(6):1893--1910, 2014.

\bibitem{AmorosoDvornicich2000}
F.~Amoroso and R.~Dvornicich.
\newblock A lower bound for the height in abelian extensions.
\newblock {\em J. Number Theory}, 80(2):260--272, 2000.

\bibitem{AmorosoTerracini2024}
F.~Amoroso and L.~Terracini.
\newblock Bogomolov property and {G}alois representations.
\newblock {\em J. Number Theory}, 279:294--322, 2026.

\bibitem{AmorosoZannier2000}
F.~Amoroso and U.~Zannier.
\newblock A relative {D}obrowolski lower bound over abelian extensions.
\newblock {\em Ann. Scuola Norm. Sup. Pisa Cl. Sci. (4)}, 29(3):711--727, 2000.

\bibitem{AmorosoZannier2010}
F.~Amoroso and U.~Zannier.
\newblock A uniform relative {D}obrowolski's lower bound over abelian extensions.
\newblock {\em Bull. Lond. Math. Soc.}, 42(3):489--498, 2010.

\bibitem{BGHT2011}
T.~Barnet-Lamb, D.~Geraghty, M.~Harris, and R.~Taylor.
\newblock A family of {C}alabi-{Y}au varieties and potential automorphy {II}.
\newblock {\em Publ. Res. Inst. Math. Sci.}, 47(1):29--98, 2011.

\bibitem{BellaicheChenevier}
J.~Bella\"{\i}che and G.~Chenevier.
\newblock Families of {G}alois representations and {S}elmer groups.
\newblock {\em Ast\'{e}risque}, (324):xii+314, 2009.

\bibitem{BoeckleDef}
G.~B\"{o}ckle.
\newblock Deformations of {G}alois representations.
\newblock In {\em Elliptic curves, {H}ilbert modular forms and {G}alois deformations}, Adv. Courses Math. CRM Barcelona, pages 21--115. Birkh\"{a}user/Springer, Basel, 2013.

\bibitem{BoeckleJuschka2023}
G.~B\"{o}ckle and A.-K. Juschka.
\newblock Equidimensionality of universal pseudodeformation rings in characteristic {$p$} for absolute {G}alois groups of {$p$}-adic fields.
\newblock {\em Forum Math. Sigma}, 11:Paper No. e102, 83, 2023.

\bibitem{BombieriZannier2001}
E.~Bombieri and U.~Zannier.
\newblock A note on heights in certain infinite extensions of {$\mathbb Q$}.
\newblock {\em Atti Accad. Naz. Lincei Cl. Sci. Fis. Mat. Natur. Rend. Lincei (9) Mat. Appl.}, 12:5--14 (2002), 2001.

\bibitem{BourbakiLieI}
N.~Bourbaki.
\newblock {\em \'{E}l\'{e}ments de math\'{e}matique. {F}asc. {XXVI}. {G}roupes et alg\`ebres de {L}ie. {C}hapitre {I}: {A}lg\`ebres de {L}ie}.
\newblock Actualit\'{e}s Scientifiques et Industrielles [Current Scientific and Industrial Topics], No. 1285. Hermann, Paris, 1971.
\newblock Seconde \'{e}dition.

\bibitem{Breuil2003}
C.~Breuil.
\newblock Sur quelques repr\'{e}sentations modulaires et {$p$}-adiques de {${\rm GL}_2(\mathbf Q_p)$}. {II}.
\newblock {\em J. Inst. Math. Jussieu}, 2(1):23--58, 2003.

\bibitem{BuzzardSlopes}
K.~Buzzard.
\newblock Questions about slopes of modular forms.
\newblock Number 298, pages 1--15. 2005.
\newblock Automorphic forms. I.

\bibitem{BuzzardEigenvarieties}
K.~Buzzard.
\newblock Eigenvarieties.
\newblock In {\em {$L$}-functions and {G}alois representations}, volume 320 of {\em London Math. Soc. Lecture Note Ser.}, pages 59--120. Cambridge Univ. Press, Cambridge, 2007.

\bibitem{BuzzardGeeSlopes}
K.~Buzzard and T.~Gee.
\newblock Slopes of modular forms.
\newblock In {\em Families of automorphic forms and the trace formula}, Simons Symp., pages 93--109. Springer, [Cham], 2016.

\bibitem{CalegariSardari}
F.~Calegari and N.~Talebizadeh~Sardari.
\newblock Vanishing {F}ourier coefficients of {H}ecke eigenforms.
\newblock {\em Math. Ann.}, 381(3-4):1197--1215, 2021.

\bibitem{Chenevier2008}
G.~Chenevier.
\newblock Quelques courbes de {H}ecke se plongent dans l'espace de {C}olmez.
\newblock {\em J. Number Theory}, 128(8):2430--2449, 2008.

\bibitem{Chenevier2014}
G.~Chenevier.
\newblock The {$p$}-adic analytic space of pseudocharacters of a profinite group and pseudorepresentations over arbitrary rings.
\newblock In {\em Automorphic forms and {G}alois representations. {V}ol. 1}, volume 414 of {\em London Math. Soc. Lecture Note Ser.}, pages 221--285. Cambridge Univ. Press, Cambridge, 2014.

\bibitem{ColemanMazur}
R.~Coleman and B.~Mazur.
\newblock The eigencurve.
\newblock In {\em Galois representations in arithmetic algebraic geometry ({D}urham, 1996)}, volume 254 of {\em London Math. Soc. Lecture Note Ser.}, pages 1--113. Cambridge Univ. Press, Cambridge, 1998.

\bibitem{ContiLangMedved2023}
A.~Conti, J.~Lang, and A.~Medvedovsky.
\newblock Big images of two-dimensional pseudorepresentations.
\newblock {\em Math. Ann.}, 385(3-4):1085--1179, 2023.

\bibitem{AnalyticpropBook}
J.~D. Dixon, M.~P.~F. du~Sautoy, A.~Mann, and D.~Segal.
\newblock {\em Analytic pro-{$p$} groups}, volume~61 of {\em Cambridge Studies in Advanced Mathematics}.
\newblock Cambridge University Press, Cambridge, second edition, 1999.

\bibitem{Edixhoven1992}
B.~Edixhoven.
\newblock The weight in {S}erre's conjectures on modular forms.
\newblock {\em Invent. Math.}, 109(3):563--594, 1992.

\bibitem{Ghate2004}
E.~Ghate.
\newblock On the local behavior of ordinary modular {G}alois representations.
\newblock In {\em Modular curves and abelian varieties}, volume 224 of {\em Progr. Math.}, pages 105--124. Birkh\"auser, Basel, 2004.

\bibitem{Habegger2013}
P.~Habegger.
\newblock Small height and infinite nonabelian extensions.
\newblock {\em Duke Math. J.}, 162(11):2027--2076, 2013.

\bibitem{HajirMaire2002}
F.~Hajir and C.~Maire.
\newblock Extensions of number fields with wild ramification of bounded depth.
\newblock {\em Int. Math. Res. Not.}, (13):667--696, 2002.

\bibitem{HattoriNewton}
S.~Hattori and J.~Newton.
\newblock Irreducible components of the eigencurve of finite degree are finite over the weight space.
\newblock {\em J. Reine Angew. Math.}, 763:251--269, 2020.

\bibitem{LoefflerDens2011}
D.~Loeffler.
\newblock Density of classical points in eigenvarieties.
\newblock {\em Math. Res. Lett.}, 18(5):983--990, 2011.

\bibitem{Lubotzky1988}
A.~Lubotzky.
\newblock A group theoretic characterization of linear groups.
\newblock {\em J. Algebra}, 113(1):207--214, 1988.

\bibitem{MazurDef1989}
B.~Mazur.
\newblock Deforming {G}alois representations.
\newblock In {\em Galois groups over {${\bf Q}$} ({B}erkeley, {CA}, 1987)}, volume~16 of {\em Math. Sci. Res. Inst. Publ.}, pages 385--437. Springer, New York, 1989.

\bibitem{Neukirch1999}
J.~Neukirch.
\newblock {\em Algebraic number theory}, volume 322 of {\em Grundlehren der mathematischen Wissenschaften [Fundamental Principles of Mathematical Sciences]}.
\newblock Springer-Verlag, Berlin, 1999.

\bibitem{Nyssen1996}
L.~Nyssen.
\newblock Pseudo-repr\'{e}sentations.
\newblock {\em Math. Ann.}, 306(2):257--283, 1996.

\bibitem{Probst2008}
C.~Probst.
\newblock Filtrations of {$p$}-adic analytic {G}alois groups of local fields.
\newblock Phd thesis, Universit\"at Heidelberg, 2008.

\bibitem{Ribet1975}
K.~A. Ribet.
\newblock On {$l$}-adic representations attached to modular forms.
\newblock {\em Invent. Math.}, 28:245--275, 1975.

\bibitem{RiehmNorm1}
C.~Riehm.
\newblock The norm {$1$} group of a {$p$}-adic division algebra.
\newblock {\em Amer. J. Math.}, 92:499--523, 1970.

\bibitem{Sen1972}
S.~Sen.
\newblock Ramification in {$p$}-adic {L}ie extensions.
\newblock {\em Invent. Math.}, 17:44--50, 1972.

\bibitem{SenHodgeTate}
S.~Sen.
\newblock An infinite-dimensional {H}odge-{T}ate theory.
\newblock {\em Bull. Soc. Math. France}, 121(1):13--34, 1993.

\bibitem{Serre1967}
J.-P. Serre.
\newblock Sur les groupes de {G}alois attach\'es aux groupes {$p$}-divisibles.
\newblock In {\em Proc. {C}onf. {L}ocal {F}ields ({D}riebergen, 1966)}, pages 118--131. Springer, Berlin-New York, 1967.

\bibitem{Serre1968}
J.-P. Serre.
\newblock {\em Corps locaux}, volume No. VIII of {\em Publications de l'Universit\'{e} de Nancago}.
\newblock Hermann, Paris, 1968.
\newblock Deuxi\`eme \'{e}dition.

\bibitem{Serre1972}
J.-P. Serre.
\newblock Propri\'{e}t\'{e}s galoisiennes des points d'ordre fini des courbes elliptiques.
\newblock {\em Invent. Math.}, 15(4):259--331, 1972.

\bibitem{Serre1987}
J.-P. Serre.
\newblock Sur les repr\'{e}sentations modulaires de degr\'{e} {$2$} de {$\mathrm{Gal}(\overline{\mathbf{Q}}/\mathbf{Q})$}.
\newblock {\em Duke Math. J.}, 54(1):179--230, 1987.

\bibitem{LMFDB}
{The LMFDB Collaboration}.
\newblock The {L}-functions and modular forms database.
\newblock \url{https://www.lmfdb.org}, 2024.

\bibitem{Viviani2004}
F.~Viviani.
\newblock Ramification groups and {A}rtin conductors of radical extensions of {$\mathbb Q$}.
\newblock {\em J. Th\'eor. Nombres Bordeaux}, 16(3):779--816, 2004.

\bibitem{WingbergLoc}
K.~Wingberg.
\newblock Galois groups of local and global type.
\newblock {\em J. Reine Angew. Math.}, 517:223--239, 1999.

\end{thebibliography}

\bigskip

\begin{small}
\noindent\textsc{Andrea Conti} -- IWR, Heidelberg University, Im Neuenheimer Feld 205, 69120 Heidelberg, Germany, \url{andrea.conti@iwr.uni-heidelberg.de}

\smallskip

\noindent\textsc{Lea Terracini} -- Dipartimento di Informatica, Università di Torino, Corso Svizzera 185, 10149 Torino, Italy, \url{lea.terracini@unito.it}
\end{small}

\end{document}